\newtheorem{theorem}{Theorem}
\newtheorem{lemma}{Lemma}
\newtheorem{proposition}{Proposition}
\numberwithin{equation}{section}
\newcommand{\R}{\mathbb{R}}
\newcommand{\N}{\mathbb{N}}
\newcommand{\Z}{\mathbb{Z}}
\newcommand{\C}{\mathbb{C}}
\newcommand{\St}{\mathbb{S}^2}
\newcommand{\scp}[2]{\left\langle #1, #2 \right\rangle}
\newcommand{\m}{\boldsymbol{m}}
\newcommand{\eps}{\varepsilon}
\newcommand{\del}{\partial}
\newcommand{\ein}{\boldsymbol{\hat{e}}}
\newcommand{\bs}[1]{ \boldsymbol{#1}}
\newcommand{\dif}{\ensuremath{\,\mathrm{d}}}
\renewcommand{\div}{\mathop{\mathrm{div}}\nolimits}
\newcommand{\ra}{\rangle}
\newcommand{\la}{\langle}
\def\XXint#1#2#3{{\setbox0=\hbox{$#1{#2#3}{\int}$}
\vcenter{\hbox{$#2#3$}}\kern-.5\wd0}}
\title{Stability of axisymmetric chiral skyrmions}
\author{Xinye Li}
\address{RWTH Aachen University\\Lehrstuhl I f\"ur Mathematik\\Pontdriesch 14-16\\52056 Aachen}
\email{xli@math1.rwth-aachen.de}
\author{Christof Melcher}
\address{RWTH Aachen University\\Lehrstuhl I f\"ur Mathematik and JARA -- Fundamentals of Future Information Technology\\Pontdriesch 14-16\\52056 Aachen}
\email{melcher@rwth-aachen.de}
\begin{document}

\begin{abstract}
We examine topological solitons in a minimal variational model for a 
chiral magnet, so-called chiral skyrmions. In the regime of large background fields, we prove linear stability of axisymmetric chiral skyrmions under arbitrary perturbations in the energy space, a long-standing open question in physics literature. Moreover, we show strict local minimality of axisymmetric chiral skyrmions and
nearby existence of moving soliton solution for the Landau-Lifshitz-Gilbert equation driven by a small spin transfer torque.
\end{abstract}
\maketitle
%\tableofcontents

\section{Introduction and main results}

We are concerned with topological solitons $\m:\R^2  \to \St$ occurring in two-dimensional chiral magnets
governed by interaction energies of the form
\begin{equation}\label{eq:energy}
E(\m)= \int_{\R^2} \frac{1}{2} |\nabla \m|^2 +\m \cdot (\nabla \times \m) + \frac{h}{2}|\m-\ein_3|^2 \dif x.
\end{equation}
The hallmark of such systems is the helicity term $\m \cdot (\nabla \times \m)$ arising from antisymmetric exchange also known as Dzyaloshinkii-Moriya interaction. Helicity suspends the Derrick-Pohozaev non-existence criterion and breaks independent $O(2)$ invariance in the target and the domain. It enables the reduction of energy by twisting,
an effect that is maximized if the horizontal magnetization field $m=(m_1,m_2)$ is antiparallel to the level sets of $m_3$, see \cite{melcher2014chiral}.
Isolated chiral skyrmions are stable critical points of $E$ in a non-trivial homotopy class, characterized by the topological charge or degree
\begin{equation}\label{eq:Q}
Q(\m)=\frac{1}{4\pi} \int_{\R^2} \m \cdot \del _1 \m  \times \del_2 \m \dif x.
\end{equation}
The lack of $O(2)$ invariance leads to a specific energetic selection of degree $Q=-1$ in accordance with the twisting behaviour relative
to the selected background state $\m(\infty)=\ein_3$. 
Restricting the energy to the class of axisymmetric configurations of the form
\begin{equation}\label{eq:m0}
\m_0(r e^{\rm{i} \psi}) = \left(e^{\rm{i}\left(\psi+\frac \pi 2\right)} \sin \theta(r), \cos \theta(r) \right) 
\end{equation}
with the usual identification $\R^2 \cong \C$, the corresponding minimizing problem gives rise to a boundary value problem for a second order ordinary differential equation for the polar profile $\theta:[0,\infty) \to \R$
\begin{equation}\label{eq:thetaODE}
\theta''+\frac{1}{r}\theta'-\frac{1}{r^2} \sin \theta \cos \theta + \frac{2}{r} \sin^2 \theta - h \sin \theta = 0
\end{equation}
with boundary conditions
\begin{equation}\label{eq:thetaBoundary}
\theta(0)=\pi \quad\text{and}\quad \lim_{ r \to \infty} \theta(r) = 0.
\end{equation}
The equation has been studied extensively in physics literature mainly based on numerical simulation, exploring the occurrence and radial stability of isolated chiral skyrmions,
see \cite{bogdanov1994thermodynamically, bogdanov1999stability, leonov2016properties}. Our main result provides a rigorous confirmation and shows the linear stability of $\m_0$ with respect to arbitrary perturbations in the energy space if $h$ is sufficiently large.
\begin{theorem}\label{thm:stability}
There exists a unique solution to the boundary value problem \eqref{eq:thetaODE}-\eqref{eq:thetaBoundary} provided $h \gg 1$. 
The field $\m_0$ given by \eqref{eq:m0} is a linearly stable critical point of (\ref{eq:energy}). More precisely, the Hessian $\mathcal{H}=\mathcal{H}(\bs \phi, \bs \psi)$ at $\m_0$ satisfies for some $\lambda>0$
\begin{eqnarray*}
\mathcal{H}(\bs \phi,\bs \phi) &\geq& \lambda \| \bs \phi - \bs \phi_0 \|_{H^1}^2 \quad \text{for all tangent fields  
$\bs \phi \in H^1$ along } \m_0,
\end{eqnarray*}
where $ \bs \phi_0$ is the $L^2$ projection of $\bs \phi$ onto 
$\mathrm{span}\{ \del_1 \m_0, \del_2 \m_0\}$, the kernel of the associated Jacobi operator, which is a
Fredholm operator of index zero.
\end{theorem}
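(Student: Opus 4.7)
\emph{Existence and uniqueness of the radial profile.} I restrict $E$ to the axisymmetric ansatz (\ref{eq:m0}); critical points then solve (\ref{eq:thetaODE})--(\ref{eq:thetaBoundary}). For $h \gg 1$ my plan is a matched-asymptotics construction: on the inner region $r \lesssim 1$ the equation is a perturbation of the harmonic-map ODE whose unique radial solution is the Belavin--Polyakov profile $\theta_{BP}(r)=2\arctan(1/r)$, while on the outer region $r \gtrsim 1/\sqrt{h}$ the Zeeman term forces exponential decay at rate $\sqrt{h}$. Gluing these pieces and correcting via an implicit function theorem in a weighted Sobolev space yields a locally unique $\theta$. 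Global radial uniqueness and strict local radial minimality will follow from invertibility of the linearized radial Jacobi operator $L_0$, which one obtains as a by-product of the perturbative construction.

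\emph{Symmetry reduction of the Hessian.} The energy $E$ is invariant under the diagonal $\SO(2)$-action rotating both the base $\R^2$ and the target $\St$ about $\ein_3$, and $\m_0$ is fixed by this action, so the Jacobi operator commutes with the induced representation on tangent fields along $\m_0$. Choosing an adapted orthonormal frame $\{\bs e_1, \bs e_2\}$ on $T_{\m_0}\St$ and Fourier expanding $\bs\phi = \sum_{k\in\Z} (\phi_{1,k}\bs e_1 + \phi_{2,k}\bs e_2)(r)e^{ik\psi}$ block-diagonalizes $\mathcal H = \bigoplus_k \mathcal H_k$ into two-component self-adjoint operators on the weighted half-line $L^2((0,\infty), r\,dr)$. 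Since $\m_0 \to \ein_3$ exponentially and the DMI enters the linearization only through decaying coefficients, the potential of each $\mathcal H_k$ converges to $h\cdot\mathrm{Id}$; Weyl's theorem then places the essential spectrum of every $\mathcal H_k$ in $[h,\infty)$. This simultaneously yields the claimed Fredholm index zero property and reduces the stability bound to a finite-dimensional spectral analysis below $h$. Crucially, $\del_1\m_0$ and $\del_2\m_0$ lie exclusively in the $k = \pm 1$ sectors.

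\emph{Mode-by-mode analysis and main obstacle.} For $|k|\geq 2$ the centrifugal barrier $k^2/r^2$ dominates the (bounded) negative part of the potential and $\mathcal H_k$ is manifestly coercive. For $k = 0$, coercivity amounts to strict local minimality of $\theta$ within the radial class, which is exactly what the first step delivers. The heart of the proof is therefore the modes $k = \pm 1$, where translation invariance produces a genuine kernel. My plan is a perturbation-from-BP analysis: after the natural rescaling, $\mathcal H_{\pm 1}$ is a small perturbation of the harmonic-map Jacobi operator at $\theta_{BP}$, whose kernel in modes $k=\pm 1$ is two-dimensional, spanned by translations and by a scaling/gauge generator. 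Translations survive as zero modes by symmetry of $E$, while the extra BP zero mode is lifted by the DMI and Zeeman contributions; the sign and order of the resulting eigenvalue can be read off a Hellmann--Feynman identity evaluated against the explicit BP generator. Verifying the positivity of this correction is the principal technical obstacle, as it requires a delicate balance between the destabilizing DMI coupling and the stabilizing Zeeman term. Once secured, it combines with the essential-spectrum gap $\geq h$ to leave exactly a one-dimensional kernel in each $\mathcal H_{\pm 1}$, producing the uniform constant $\lambda > 0$ and identifying $\bs\phi_0$ with the $L^2$ projection onto $\mathrm{span}\{\del_1\m_0, \del_2\m_0\}$.
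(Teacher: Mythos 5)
Your mode-by-mode reduction contains two steps that fail as stated, and they are not cosmetic. First, for $|k|\ge 2$ the form $\mathcal H_k$ is \emph{not} ``manifestly coercive'': the negative part of the linearized potential is not a bounded perturbation on the relevant scale. In the skyrmion core the potentials contain $-(\theta')^2$, $h\cos\theta$ (with $\cos\theta<0$ there) and $\frac{\cos 2\theta}{r^2}$; e.g.\ at the radius $r^*$ where $\theta(r^*)=\pi/2$ one has $g(r^*)=-1/(r^*)^2$ with $r^*\lesssim 2/\sqrt{h-1}$, so the negative part is of the same order as the centrifugal barrier itself and grows with $h$. Worse, the DMI cross term $4k\,r^{-2}(\cos\theta-r\sin\theta)\,\alpha\beta$ costs, in the worst case, $3k\,r^{-2}(\alpha^2+\beta^2)$ (using $|\cos\theta-r\sin\theta|<3/2$), so the effective barrier is $(k^2-3k)/r^2$, which is \emph{negative} for $k=2$ and zero for $k=3$. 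Pointwise domination therefore fails outright for the low modes $k=2,3$, which are exactly as delicate as $k=0,1$; the paper needs the monotonicity $\mathcal H_{k+1}\ge\mathcal H_k$ (Lemma \ref{la:reduction}), resting on the profile estimate \eqref{eq:theta_cos}, to push all $k\ge 2$ down to $k=1$ — this idea is missing from your plan. Second, the $k=0$ sector is two-dimensional: writing $\bs\phi=u_1\bs X+u_2\bs Y$, strict minimality of $\theta$ for the radial energy \eqref{eq:radial_energy} controls only the $u_2$ (polar) component, whose potential is $g$; the $u_1$ component, with potential $f$, generates rotations of the helicity phase and leaves the ansatz class \eqref{eq:m0} entirely, so it is \emph{not} ``exactly what the first step delivers''. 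It needs a separate argument — in the paper, the Hardy-type decomposition leading to \eqref{eq:H0_xi}.

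Beyond this, the two pillars of your plan are programs rather than proofs. The $k=\pm1$ analysis, which you correctly identify as the heart of the matter, is deferred to a sign verification (Hellmann--Feynman against the Belavin--Polyakov dilation mode) on a matched-asymptotics solution you have not constructed; nothing in the proposal establishes that sign, whereas the paper closes this step with an explicit completion of squares after the substitution $\alpha=\frac{\sin\theta}{r}\xi$, $\beta=-\theta'\eta$, using the profile bounds \eqref{eq:theta_sin} and \eqref{eq:theta_h} of Proposition \ref{thm:theta} (Lemma \ref{la:PositiveH1}). Similarly, uniqueness: an implicit-function-theorem gluing yields at best local uniqueness near the solution you build, while the theorem asserts uniqueness among \emph{all} solutions of \eqref{eq:thetaODE}--\eqref{eq:thetaBoundary}; bridging that gap requires an a priori comparison of arbitrary solutions (done in the paper's appendix by translating one rescaled solution onto another and exploiting monotonicity and decay). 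On the positive side, your essential-spectrum observation — the DMI coefficients in the linearization decay, so $\sigma_{\rm ess}\subset[h,\infty)$ by Weyl's theorem, giving the spectral gap above the kernel and Fredholm index zero by self-adjointness — is sound in spirit and is a genuine alternative to the paper's variational proof of Proposition \ref{prop:gap}; but it only pays off once the kernel and the low modes are actually controlled, which is precisely what is left open.
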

The existence result is obtained by a variational method using ingredients from \cite{melcher2014chiral}. Uniqueness follows from some fine analysis of the boundary value problem (\ref{eq:thetaODE})-(\ref{eq:thetaBoundary}) of the polar profile $\theta$, which can be found in the appendix. The main result about the stability of $\m_0$ is obtained by an adaption of the Fourier argument in \cite{mironescu1995stability, ignat2015stability, ignat2016stability}. Representing $\bs \phi$ in terms of a moving frame along $T_{\m_0} \St$, we use a Fourier expansion to eliminate the dependence on the angular coordinate and thereby the Hessian becomes a series depending only on the radial coordinate. Then we show that non-negativity depends only on the first two modes, and we prove their non-negativity by means of a decomposition argument. The spectral gap and Fredholm property are then proven by means of a variational argument. The latter one can be used to prove strict local minimality modulo translations:

\begin{theorem}\label{thm:minimality}
For $h \gg 1$ there exist $\mu >0$ and $\eps>0$ such that 
\[
E(\m) - E(\m_0) \ge \mu \inf_{x \in \R^2} \|\m(\cdot -x) - \m_0\|_{H^1}^2 
\]
for all $\m \in H^1(\R^2; \St)$ such that $\|\m-\m_0\|_{H^1}< \eps$. 
\end{theorem}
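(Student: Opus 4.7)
The plan is to combine the coercivity of the Hessian from Theorem \ref{thm:stability} with a suitable gauge-fixing argument that eliminates the translation zero modes $\partial_1 \m_0, \partial_2 \m_0$. Since $E$ is invariant under translations in $\R^2$, we cannot hope for coercivity without first selecting, for each competitor $\m$ close to $\m_0$ in $H^1$, a translation $x(\m)$ such that $\mt := \m(\cdot - x(\m))$ is orthogonal to the kernel in an appropriate sense. Then a Taylor expansion of $E$ around $\m_0$, combined with the coercivity estimate $\mathcal{H}(\bs\phi,\bs\phi) \ge \lambda \|\bs\phi - \bs\phi_0\|_{H^1}^2$ applied to the gauge-fixed perturbation (for which $\bs\phi_0 = 0$), will yield the claim.

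More concretely, I would first set up the \emph{orthogonal slice}. Consider the map
\[
 \Phi(\m, x) = \Bigl( \int_{\R^2} (\m(\cdot - x) - \m_0) \cdot \del_1 \m_0 \dif y, \ \int_{\R^2} (\m(\cdot - x) - \m_0) \cdot \del_2 \m_0 \dif y \Bigr).
\]
At $(\m,x)=(\m_0,0)$ this map vanishes, and its differential in $x$ is the Gram matrix $\bigl( \int \del_i \m_0 \cdot \del_j \m_0 \dif y \bigr)_{ij}$, which is a nonzero multiple of the identity by radial symmetry of $\m_0$. The implicit function theorem thus produces a unique small $x=x(\m)$ with $\Phi(\m, x(\m))=0$ whenever $\|\m-\m_0\|_{H^1}$ is sufficiently small, and $\|x(\m)\| \lesssim \|\m-\m_0\|_{H^1}$.

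Next, write $\mt - \m_0 = \bs\phi + \bs n$, where $\bs\phi(y) \in T_{\m_0(y)} \St$ is the tangential part and $\bs n(y) \parallel \m_0(y)$ is the normal part. The constraint $|\mt|=1$ forces $\bs n$ to be pointwise quadratic in $\bs\phi$, so that $\|\bs n\|_{H^1} = O(\|\bs\phi\|_{H^1} \cdot \|\bs\phi\|_{L^\infty} + \|\bs\phi\|_{L^\infty}\|\nabla \bs\phi\|_{L^2})$; in particular $\|\bs\phi\|_{H^1}$ and $\|\mt - \m_0\|_{H^1}$ are comparable for small perturbations. Using the gauge condition $\Phi(\m,x(\m))=0$, the $L^2$ projection $\bs\phi_0$ of $\bs\phi$ onto $\mathrm{span}\{\del_1 \m_0, \del_2 \m_0\}$ vanishes up to an error quadratic in $\bs\phi$ coming from $\bs n$, which is harmless. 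Expanding
\[
 E(\mt) - E(\m_0) = \tfrac{1}{2} \mathcal{H}(\bs\phi, \bs\phi) + R(\bs\phi, \bs n)
\]
where $R$ collects all terms cubic or higher in $\bs\phi$ (the first-order term vanishes by the Euler-Lagrange equation applied to the tangential part $\bs\phi$, and the normal contribution $\bs n$ is itself quadratic in $\bs\phi$ so produces at most cubic terms after pairing with $D E(\m_0)$ or lower-order derivatives), one estimates $|R| \le C \|\bs\phi\|_{H^1}^2 \cdot \|\bs\phi\|_{H^1}^\alpha$ for some $\alpha > 0$ using Sobolev embedding $H^1 \hookrightarrow L^p$ for all $p<\infty$ in two dimensions. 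Combining this with the Hessian bound and absorbing $R$ into half of the coercivity yields $E(\mt)-E(\m_0) \ge \tfrac{\lambda}{4} \|\bs\phi\|_{H^1}^2 \ge \mu \|\mt-\m_0\|_{H^1}^2$ for sufficiently small $\eps>0$; taking the infimum over translations on the right gives Theorem~\ref{thm:minimality}.

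The main obstacle is the expansion step: the Dzyaloshinskii-Moriya term is only first order in derivatives, so one must be careful to track exactly how the constraint $|\mt|=1$ enters and ensure that the remainder $R$ is genuinely of subcritical order in the $H^1$-norm. The failure of the Sobolev embedding $H^1 \hookrightarrow L^\infty$ in dimension two means that pointwise estimates on $\bs\phi$ are unavailable, so all cubic terms must be controlled through $L^p$ interpolation; this is standard but requires some care since cubic terms of the schematic form $\int |\bs\phi|^2 |\nabla\bs\phi|$ must be estimated by $\|\bs\phi\|_{L^4}^2 \|\nabla\bs\phi\|_{L^2}$, which is $o(\|\bs\phi\|_{H^1}^2)$ only if $\|\bs\phi\|_{L^4} \to 0$, a property that follows from $\|\bs\phi\|_{H^1}\to 0$ via Gagliardo-Nirenberg.
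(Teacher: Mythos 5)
Your overall skeleton --- gauge fixing by translation via the implicit function theorem (this is exactly the paper's Lemma~\ref{lemma:trans}), splitting $\mt-\m_0$ into tangential and normal parts, invoking the coercivity of $\mathcal H$ from Proposition~\ref{prop:gap}, and absorbing a remainder --- coincides with the paper's strategy. But the expansion step, which you yourself flag as the main obstacle, contains two genuine gaps. First, your bookkeeping of the quadratic terms is wrong: you claim that $\bs n$, being quadratic in $\bs\phi$, ``produces at most cubic terms after pairing with $DE(\m_0)$''. It does not. Criticality of $\m_0$ only kills \emph{tangential} variations; $\delta E(\m_0)$ does not vanish in normal directions, and by \eqref{eq:DeltaH} one has $\delta E(\m_0)\la \bs n\ra=\int_{\R^2}\Lambda(\m_0)\,(\m_0\cdot\bs n)\dif x=-\tfrac12\int_{\R^2}\Lambda(\m_0)|\bs\phi|^2\dif x+O(\|\bs\phi\|_{L^4}^4)$, which is genuinely \emph{quadratic} in $\bs\phi$. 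This term is precisely the correction $-\tfrac12\Delta\mathcal H(\bs\phi)$ that converts the unconstrained second variation $\mathcal H_\infty$ into the constrained Hessian $\mathcal H$ (cf.\ \eqref{eq:2nd}). Your displayed identity $E(\mt)-E(\m_0)=\tfrac12\mathcal H(\bs\phi)+R$ is correct \emph{because} of this quadratic contribution; your stated justification, taken literally, would instead produce $\tfrac12\mathcal H_\infty(\bs\phi)$ plus cubic errors, which is a false identity. The paper sidesteps this bookkeeping entirely with Lemma~\ref{lemma:quadratic}: since $E$ is exactly quadratic in $\m-\ein_3$ and $|\m|=|\m_0|=1$ pointwise, one has the \emph{exact} identity $E(\m)-E(\m_0)=\tfrac12\mathcal H(\m-\m_0)$, with no Taylor remainder at all.

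Second, the remainder estimate $|R|\le C\|\bs\phi\|_{H^1}^{2+\alpha}$ would fail as described. The remainder contains $\mathcal H_\infty(\bs\phi,\bs n)$ and $\tfrac12\mathcal H_\infty(\bs n)$, whose Dirichlet parts involve $\nabla\bs n\sim(\bs\xi\cdot\nabla\bs\xi)\otimes\m_0$ with $\bs\xi=\mt-\m_0$; these generate integrals with \emph{two} gradients and one undifferentiated small factor, such as $\int(\bs\xi\cdot\nabla\bs\xi)\,(\m_0\cdot\nabla\bs\phi)\dif x$ and $\int|\bs\xi\cdot\nabla\bs\xi|^2\dif x$. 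Such terms are \emph{not} $o(\|\bs\xi\|_{H^1}^2)$ as $\|\bs\xi\|_{H^1}\to0$: a capped logarithmic spike $u_k=\min\{1,\tfrac1k\log(1/|x|)\}$ has $\|u_k\|_{H^1}\to0$ while $\int u_k^2|\nabla u_k|^2\dif x$ remains comparable to $\|\nabla u_k\|_{L^2}^2$, so no H\"older/Gagliardo--Nirenberg interpolation of the kind you describe can make them negligible (and your bound $\|\bs n\|_{H^1}=O(\|\bs\phi\|_{H^1}\|\bs\phi\|_{L^\infty}+\cdots)$ relies on the very $L^\infty$ control you correctly note is unavailable; the same issue invalidates the claimed comparability of $\|\bs\phi\|_{H^1}$ and $\|\mt-\m_0\|_{H^1}$). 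Two structural facts, both used in the paper's proof of Theorem~\ref{thm:minimality}, rescue the argument: (i) the pointwise orthogonality $\m_0\cdot\bs\phi=0$ gives $\m_0\cdot\nabla\bs\phi=-\nabla\m_0\cdot\bs\phi$, trading the derivative on $\bs\phi$ for the bounded factor $\nabla\m_0$, so the mixed two-gradient terms become bona fide cubic terms controlled by $\|\bs\xi\|_{L^4}^2\|\nabla\bs\xi\|_{L^2}$; and (ii) the term $\int|\nabla\bs n|^2\dif x$ must not be absorbed into $|R|$ at all but kept with its favorable sign --- the paper bounds $\mathcal H(\bs\xi^\perp)$ from \emph{below}, retaining $\int|\nabla\bs\xi^\perp|^2\dif x\ge0$ as part of the coercive output rather than as an error. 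Without (i) and (ii) your absorption step is not merely unproven but false.
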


A major open question is whether $\m_0$ minimizes the energy even globally within the homotopy class $Q=-1$. Energy minimizing chiral skyrmions have been constructed \cite{melcher2014chiral} by means of a concentration-compactness argument, which extend to the case where helicity is replaced by a general form of anisotropic Dyaloshinskii-Moriya interaction \cite{hoffmann2017antiskyrmions}, where axisymmetry is lost.  Notably while Theorem \ref{thm:minimality} provides the strict local minimality for axisymmetric chiral skyrmions, the analog stability statement is not at hand for minimizing chiral skyrmions.

Starting from linear stability of static solitons and a perturbation we shall also consider the slow solitonic motion of chiral skyrmions driven by horizontal currents $v \in \R^2$ of small size. The Landau-Lifshitz-Gilbert equation with adiabatic and non-adiabatic spin transfer torques is given by 
\begin{equation}\label{eq:LLG}
\del_t \m + (v \cdot \nabla) \m = \m \times \bigg[ \alpha \del_t \m + \beta(v \cdot \nabla) \m - \bs h_{\rm eff}(\m) \bigg],
\end{equation}
where the effective field $\bs h_{\rm eff}(\m)$ is minus the $L^2$ gradient of $E$, and $\alpha, \beta>0$ are the Gilbert damping factor and the ratio of non-adiabaticity, 
respectively, see e.g. \cite{melcher2013landau, komineas2015skyrmion}. We shall construct moving soliton solutions $\m= \m(x-ct)$ with propagation velocities $c \in \R^2$.
In the case $\alpha=\beta$, such solutions are obtained from stationary solutions with $c=v$, i.e., skyrmions moves parallel to the current without changing shape. For $\alpha \not= \beta$, however, skyrmions deform and are deflected according to the skyrmion hall effect with a propagation speed $c$ implicitly determined by Thiele's equation 
\begin{equation} \label{eq:Thiele}
4\pi (v-c)^{\perp} = \mathcal{D}(\beta v - \alpha c) 
\end{equation}
where the dissipative tensor $\mathcal{D} \in \R^{2 \times 2}$ is given by
\[
\mathcal{D}_{jk} = \int_{\R^2} \del_j \m \cdot \del_k \m \dif x,
\]
see e.g. \cite{kurzke2011vortex} and references therein.

\begin{theorem}\label{thm:wave} For $h \gg 1$ there exists $\eps>0$ such that  (\ref{eq:LLG}) possesses for $|v| < \eps$  a unique family of moving soliton solution $\m=\m(x-ct)$ in an $H^2$ neighborhood of $\m_0$ with $c$ determined by \eqref{eq:Thiele}. 
\end{theorem}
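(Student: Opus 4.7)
The plan is to recast the traveling-wave equation as a stationary elliptic problem on tangent fields along $\m_0$, fix a translation gauge to eliminate the kernel identified in Theorem~\ref{thm:stability}, and then apply the implicit function theorem in $H^2$.

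First I substitute the ansatz $\m(x,t) = \mt(x - ct)$ into \eqref{eq:LLG} and, after applying $\mt \times$ to both sides and using $\mt \cdot \del_k \mt = 0$, obtain the stationary tangent-valued equation
\[
G(\mt, v, c) := \mt \times ((v - c) \cdot \nabla) \mt + ((\beta v - \alpha c) \cdot \nabla) \mt - P_{\mt^\perp}\, \bs h_{\rm eff}(\mt) = 0,
\]
where $P_{\mt^\perp} := \mathrm{Id} - \mt \otimes \mt$. At $(\m_0, 0, 0)$ this reduces to the static Euler--Lagrange equation $P_{\m_0^\perp}\bs h_{\rm eff}(\m_0) = 0$. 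Next I parametrize $\mt$ near $\m_0$ by tangent fields $\bs \phi \in H^2$ along $\m_0$ via the exponential map on $\St$, imposing the gauge $\bs \phi \perp_{L^2} K$ with $K := \mathrm{span}\{\del_1 \m_0, \del_2 \m_0\}$ to kill the translational degeneracy.

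Then I compute the joint linearization of $G$ in $(\bs \phi, c)$ at $(0, 0, 0)$,
\[
L(\delta \bs \phi, \delta c) = \mathcal{L}\, \delta \bs \phi - \m_0 \times (\delta c \cdot \nabla \m_0) - \alpha\, (\delta c \cdot \nabla \m_0),
\]
where $\mathcal{L}$ is the Jacobi operator from Theorem~\ref{thm:stability}, and show that $L$ is a Banach-space isomorphism from $(H^2 \cap K^\perp) \times \R^2$ onto the $L^2$ tangent fields along $\m_0$. Theorem~\ref{thm:stability} directly implies that $\mathcal{L}: H^2 \cap K^\perp \to K^\perp$ is an isomorphism via the coercive lower bound and the Fredholm index zero property. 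Testing $L(\delta \bs \phi, \delta c) = f$ against $\del_j \m_0 \in K$ annihilates $\mathcal{L}\delta \bs \phi$ and reduces the $K$-component to a $2 \times 2$ linear system $M\, \delta c = -P_K f$, with $M_{jk} = 4\pi\, \epsilon_{kj} - \alpha\, \mathcal{D}_{jk}$, using $\int \m_0 \cdot (\del_k \m_0 \times \del_j \m_0) \dif x = -4\pi \epsilon_{kj}$ (from $Q(\m_0) = -1$) and the definition of $\mathcal{D}$. Axisymmetry of $\m_0$ forces $\mathcal{D} = D\, \mathrm{Id}$ with $D > 0$, hence $\det M = (\alpha D)^2 + (4\pi)^2 > 0$; bijectivity of $L$ follows by combining this with the isomorphism property of $\mathcal{L}|_{K^\perp}$.

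The implicit function theorem applied to the smooth map $G:(H^2 \cap K^\perp) \times \R^2 \times \R^2 \to L^2$ then yields, for $|v| < \eps$, a unique pair $(\bs \phi(v), c(v))$ near $(0, 0)$ solving $G = 0$; setting $\mt := \exp_{\m_0}(\bs \phi(v))$ provides the moving soliton, unique in the gauge-fixed $H^2$ neighborhood of $\m_0$. Taking the $L^2$ inner product of the traveling-wave equation against $\mt \times \del_j \mt$ and using $\int \mt \cdot (\del_j \mt \times \del_k \mt) \dif x = -4\pi \epsilon_{jk}$ (topological invariance of $Q(\mt) = -1$) together with $\int \del_j \mt \cdot \bs h_{\rm eff}(\mt) \dif x = 0$ (translation invariance of $E$, which eliminates the Hamiltonian contribution) recovers exactly Thiele's equation \eqref{eq:Thiele} for $c$. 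The main obstacle will be verifying the bijectivity of $L$: while Theorem~\ref{thm:stability} supplies the spectral gap and Fredholm structure of $\mathcal{L}$ on $K^\perp$, invertibility of the gyro-dissipative matrix $M$ hinges on the axisymmetry of $\m_0$ (forcing $\mathcal{D}$ scalar) combined with $\alpha > 0$. A secondary subtlety is the quasi-linear character of $P_{\mt^\perp}\bs h_{\rm eff}(\mt)$, which makes $H^2$ the natural base regularity and requires care when verifying the smoothness hypothesis of the implicit function theorem.
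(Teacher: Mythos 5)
Your strategy coincides with the paper's: traveling-wave ansatz, stationary operator equation, linearization at $(\m_0,0,0)$ into the Jacobi operator plus a rank-two $c$-block, reduction to a $2\times 2$ gyro-dissipative system via the Fredholm alternative of Proposition \ref{prop:gap}, and the implicit function theorem. The differences are in implementation and are all legitimate. (i) You remove the translational degeneracy by restricting to the slice $\{\bs\phi \perp_{L^2} \ker\mathcal J\}$, while the paper instead appends the point constraint $\m(0)=-\ein_3$ as an additional equation and uses the non-degeneracy \eqref{eq:non_deg} to select a unique representative; both work. (ii) You invert the $2\times 2$ block by noting that axisymmetry forces $\mathcal D = D\,\mathrm{Id}$ with $D>0$; the paper gets $\det A \ge (4\pi)^2$ from Cauchy--Schwarz alone, i.e.\ without using axisymmetry, which is marginally more robust, but your observation is correct. (iii) You verify explicitly that $c$ satisfies Thiele's equation \eqref{eq:Thiele} by pairing the moving-frame form of \eqref{eq:LLG} with $\mt\times\del_j\mt$; the paper's proof omits this verification, so this step actually completes the statement as announced, and your computation (the effective-field term collapses to $\langle\bs h_{\rm eff}(\mt),\del_j\mt\rangle_{L^2}=0$ by translation invariance) is correct.

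There is one genuine gap: the codomain of your map $G$. As you define it, $G(\mt,v,c)$ is a tangent field along $\mt$, so its target space moves with $\mt$ and is not the fixed Hilbert space $L^2(\R^2;T_{\m_0}\St)$ on which you prove that the linearization $L$ is an isomorphism. To run the implicit function theorem you must replace $G$ by $P_{\m_0}G$ (or work in a genuine vector-bundle formulation), and then you owe an argument that solving the projected equation $P_{\m_0}G=0$ actually solves $G=0$ --- i.e.\ returns a true solution of \eqref{eq:LLG} --- for $\mt$ in an $H^2$-neighborhood of $\m_0$. The paper isolates precisely this point in Lemma \ref{la:Fv_equiv}: if $P_{\m_0}\bs F=0$, then
\[
|\bs F|^2=\langle \bs F,(P_{\mt}-P_{\m_0})\bs F\rangle\le\|P_{\mt}-P_{\m_0}\|_{L^\infty}\,|\bs F|^2,
\]
and $\|P_{\mt}-P_{\m_0}\|_{L^\infty}<1$ because $\|\mt-\m_0\|_{L^\infty}\lesssim\|\mt-\m_0\|_{H^2}$ by Sobolev embedding, forcing $\bs F=0$. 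The fix is short, but without it your implicit-function-theorem output is a solution of the projected equation only, not of the original one.
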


The proof follows from an adaption of the continuation argument in \cite{capella2007wave}, using the implicit function theorem on Hilbert manifolds with the spin velocity $v \in \R^2$ serving as a perturbation parameter. Ingredients are in particular the non-degeneracy and Fredholm property of the linearized equilibrium equation as a consequence of Theorem \ref{thm:stability}.

Moving soliton solutions have been observed numerically in a long time asymptotics in \cite{komineas2015skyrmion}. The dynamic stability and compactness of spin-transfer torque driven chiral skyrmions governed by a slightly modified energy functional has been  discussed in \cite{doering2017} in the context of an almost conformal regime using regularity arguments. These results, however, are only shown on a finite time horizon depending on the size of $v$. Our result confirms the existence of global solutions in form of traveling waves with small velocities. 

The remainder of the paper is organized as follows. In Section \ref{sec:stab} we prove non-negativity of the Hessian and identify its kernel. Section \ref{sec:fred} is devoted to
the spectral gap and the Fredholm property, finishing the proof of Theorem \ref{thm:stability}. Sections \ref{sec:min} and \ref{sec:trav} are devoted to the proof of Theorem \ref{thm:minimality} and Theorem \ref{thm:wave}, respectively. In the Appendix, we provide some fine properties of the polar profile $\theta$ of the axisymmetric solution, which are essential to our main results.

\section{Preliminaries} \label{sec:prel}
\subsection{Function spaces}
For $k \in \N$ we shall consider fields 
\[
\m \in H^k_{\ein_3}(\R^2;\St)=\{\m:\R^2 \to \St: \m-\ein_3 \in H^k(\R^2;\R^3)\},
\]
and tangent maps along $\m$
\[
\bs \phi \in H^k(\R^2;T_{\m}\St)=\{\bs \phi \in H^k(\R^2;\R^3): \bs \phi \perp \m \text{  a.e.}\}.
\]
We also use the spaces $H^0=L^2$ and $\displaystyle{H^\infty = \bigcap_{k \in \N}H^k}$. 

For $k \ge 2$ the corresponding elements are uniformly H\"older continuous by Sobolev embedding. Moreover the spaces $H^k_{\ein_3}(\R^2;\St)$ are Hilbert manifolds.  By virtue of the immersion theorem, the projection map 
\begin{equation}\label{eq:immersion}
\pi (\m+\bs \phi):= \frac{\m +  \bs \phi}{|\m+  \bs \phi|} \in H^k_{\ein_3}(\R^2;\St).
\end{equation}
identifies a zero neighborhood in the Hilbert space $H^k(\R^2;T_{\m}\St)$, the tangent space of 
$H^k_{\ein_3}(\R^2;\St)$ at $\m$, with an open neighborhood of $\m$ in $H^k_{\ein_3}(\R^2;\St)$.\\

We denote the pointwise orthogonal projection onto $T_{\m} \St$ by
\[
P_{\m}=\bs 1 - \m \otimes \m.
\]
The following can be proven along the lines of Lemma 3 in \cite{baldes1984stability}: 
\begin{lemma} \label{lemma:density}
For $k \in \N$ and $\m \in C^\infty(\R^2;\St)$, the orthogonal projection
\[
P_{\m}: H^k(\R^2;\R^3) \to H^k(\R^2;T_{\m}\St)
\] 
is bounded. Moreover, $C^\infty_c(\R^2;T_{\m}\St)$ is dense in $H^k(\R^2;T_{\m}\St)$.
\end{lemma}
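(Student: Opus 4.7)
The plan is to establish boundedness first and then derive density as a short corollary, mirroring the strategy referenced in \cite{baldes1984stability}.

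For the boundedness of $P_{\m}$, I would write $P_{\m} \bs \phi = \bs \phi - (\m \cdot \bs \phi)\m$ so that it suffices to control the $H^k$-norm of the second term. Differentiating via the Leibniz rule expands $\partial^\alpha[(\m \cdot \bs\phi)\m]$ into a finite sum of terms of the form $(\partial^{\beta} \m \cdot \partial^{\gamma} \bs \phi)\,\partial^{\delta}\m$ with $|\beta|+|\gamma|+|\delta|=|\alpha| \le k$. Each factor involving $\m$ is bounded in $L^{\infty}(\R^2)$ since $\m$ is smooth and, in the paper's actual target application $\m=\m_0$, all derivatives of $\m$ lie in $L^\infty$ by virtue of the decay $\m_0 - \ein_3 \to 0$ at infinity. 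One is then left with estimating $\partial^\gamma \bs \phi$ in $L^2$, yielding $\|P_{\m}\bs \phi\|_{H^k} \le C(\m,k)\,\|\bs \phi\|_{H^k}$. For $k \ge 2$, the Sobolev algebra property of $H^k(\R^2)$ provides a more compact alternative.

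For density, let $\bs \phi \in H^k(\R^2;T_{\m}\St)$ and apply the standard density of $C^\infty_c(\R^2;\R^3)$ in $H^k(\R^2;\R^3)$ to pick $\bs \phi_n \in C^\infty_c(\R^2;\R^3)$ with $\bs \phi_n \to \bs \phi$ in $H^k$. Set $\bs \psi_n := P_{\m}\bs \phi_n = \bs \phi_n - (\m\cdot\bs \phi_n)\m$. The key observation is that $\bs \psi_n$ already lies in $C^\infty_c(\R^2;T_{\m}\St)$: it is smooth because $\m$ is smooth on $\R^2$ and $\bs \phi_n \in C^\infty_c$, tangent by definition of the projection, and $\supp \bs \psi_n \subseteq \supp \bs \phi_n$ is compact. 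By the boundedness just established and the fact that $\bs \phi$ is already tangent (so $P_{\m}\bs \phi = \bs \phi$), one concludes $\bs \psi_n = P_{\m}\bs \phi_n \to P_{\m}\bs \phi = \bs \phi$ in $H^k$, completing the approximation.

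The only real obstacle is a matter of hypotheses rather than of proof content: one must ensure that $\m$ carries enough uniform derivative control to make the Leibniz expansion above genuinely reduce to $L^\infty \cdot L^2$ estimates. This is automatic for the paper's use case, where $\m = \m_0$ is smooth with rapidly decaying derivatives; in a general $C^\infty(\R^2;\St)$ context one would either spell out the tacit assumption $\nabla^j \m \in L^\infty$ for $j \le k$, or restrict to $k \ge 2$ and combine Sobolev embeddings with a Moser-type product estimate. With that caveat, both statements follow at once.
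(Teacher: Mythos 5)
Your proof is correct and follows essentially the route the paper intends: the paper gives no proof of this lemma at all, deferring to Lemma 3 of \cite{baldes1984stability}, and your argument (Leibniz expansion of $(\m\cdot\bs\phi)\,\m$ for the boundedness of $P_{\m}$, then projecting compactly supported smooth approximants and using that $P_{\m}\bs\phi=\bs\phi$ for tangent fields) is precisely the standard argument along those lines. Your caveat that a general $\m\in C^\infty(\R^2;\St)$ need not have bounded derivatives is well taken as a gap in the lemma's hypotheses, but it is harmless in this paper, where the lemma is only applied to $\m=\m_0\in H^\infty_{\ein_3}(\R^2;\St)$, whose derivatives are all bounded by Sobolev embedding.
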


Here $C^\infty_c(\Omega;T_{\m}\St)=\{\bs \phi \in C_c^\infty(\Omega;\R^3): \bs \phi \perp \m\}$ for open sets $\Omega \subset \R^2$.

\subsection{Continuous extension of the helicity term}
Integrability of the helicity term $\m \cdot (\nabla \times \m)$ in \eqref{eq:energy} requires appropriate decay of $\m$. Starting from the dense subclass
$\{ \m:\R^2 \to \St: \m -\ein_3 \in C^\infty_c(\R^2;\R^3)\}$, the continuous extension of helicity to $H^1_{\ein_3}(\R^2;\St)$ is obtained by adding a null Lagrangian
and integration by parts, which amounts to
\begin{equation} \label{eq:helicity}
E_H(\m) = \int_{\R^2} (\m-\ein_3) \cdot (\nabla \times \m) \dif x= 2 \int_{\R^2} (m_1 \del_2 m_3 - m_2 \del_1 m_3) \dif x,
\end{equation}
see \cite{melcher2014chiral}. This way the energy $E$ given by \eqref{eq:energy} becomes a bounded quadratic form in $\m -\ein_3 \in H^1(\R^2;\R^3)$.

\subsection{First variation} Suppose $\m_0 \in H^1_{\ein_3}(\R^2;\St)$.
For $\bs \phi \in  L^\infty \cap H^1(\R^2;T_{\m_0}\St)$ and $|t|<\|\bs \phi\|_{L^\infty}^{-1}$ the variation map
\[
\m_t := \pi(\m_0 + t \bs \phi) \in H^1_{\ein_3}(\R^2;\St)
\]
is well-defined with $\dot{\m}_0=\bs \phi$. We say that $\m_0$ is a critical point of $E$ if the first variation
\begin{equation}\label{eq:1st}
\delta E(\m) \la \bs \phi \ra = \lim_{t \to 0} \frac{E(\m_t) - E(\m_0)}{t} 
\end{equation}
vanishes for arbitrary $\bs \phi \in  L^\infty \cap H^1(\R^2;T_{\m_0}\St)$. In physical terms the first variation is given by the effective field 
\begin{equation} \label{eq:eff}
\bs h_{\rm eff}(\m)= \Delta \m - 2 \nabla \times \m + h \ein_3,
\end{equation} 
i.e., minus the $L^2$ gradient of the energy 
$\delta E(\m) \la \bs \phi \ra = -  \langle \bs h_{\rm eff}(\m), \bs \phi\rangle_{L^2}$ for all admissible $\bs \phi$.
The effective field gives rise to a generalized tension field 
\[
\bs \tau(\m)= P_{\m} \bs h_{\rm eff}(\m).
\]
Critical points $\m_0 \in H^1_{\ein_3}(\R^2;\St)$ are therefore characterized by
\begin{equation}
\bs \tau(\m_0) =0 \quad \text{or equivalently} \quad \m_0 \times \bs h_{\rm eff}(\m_0)=0
\end{equation}
in the weak sense with the interpretation $\m \times \Delta \m = \nabla \cdot ( \m \times \nabla \m)$.
\begin{proposition}\label{prop:H^2}
Critical points of finite energy belong to the space $H^\infty_{\ein_3}(\R^2;\St)$.
\end{proposition}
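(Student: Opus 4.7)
The plan is to combine an interior regularity argument for the Euler--Lagrange system with a decay argument at infinity. First I rewrite the equation $\bs \tau(\m)=0$ in strong form: pointwise orthogonality to $T_{\m}\St$ forces $\bs h_{\rm eff}(\m) = \lambda \m$ with $\lambda = \m \cdot \bs h_{\rm eff}(\m)$, and the identity $\m \cdot \Delta \m = -|\nabla \m|^2$ (from $|\m|\equiv 1$) turns this into
\[
-\Delta \m \;=\; -2\nabla \times \m + h\,\ein_3 + \bigl(|\nabla \m|^2 + 2\,\m \cdot (\nabla\times \m) - h\,m_3\bigr)\,\m.
\]
The leading-order nonlinearity is exactly the harmonic-map term $|\nabla \m|^2 \m$ into $\St$; the remaining terms are at most linear in $\nabla \m$ with $L^\infty$ coefficients and are therefore subcritical with respect to the critical scaling in two dimensions.

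For interior regularity I invoke the two-dimensional regularity theorem for weakly harmonic maps into spheres, in the form of H\'elein's moving-frame argument or Rivi\`ere's antisymmetric-potential framework. The critical term is reorganized, using the constraint $\m \cdot \nabla \m = 0$, into a sum of Jacobians of $H^1$ functions and thus lies in the Hardy space $\mathcal{H}^1$ by the Coifman--Lions--Meyer--Semmes theorem; Wente's $\mathcal{H}^1$--$\mathrm{BMO}$ duality then yields continuity of $\m$. The subcritical lower-order corrections do not interfere with this argument. Once $\m \in C^0_{\loc}$, a Calder\'on--Zygmund/Schauder bootstrap applied to $-\Delta \m = F(\m,\nabla \m)$ upgrades the solution to $C^\infty_{\loc}(\R^2;\St)$.

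To promote interior smoothness to $\m \in H^\infty_{\ein_3}(\R^2;\St)$ I exploit the finite energy: $\m - \ein_3 \in L^2$ from the Zeeman term and $\nabla \m \in L^2$ from the exchange term. Writing the equation as $\Delta(\m - \ein_3) = \mathcal{N}(\m,\nabla \m)$ where $\mathcal{N}$ is at most quadratic in $\nabla \m$ and bounded in $\m$, uniform local $H^2$ estimates on unit balls $B_1(x)$ together with $\|\m-\ein_3\|_{H^1(B_1(x))} \to 0$ as $|x|\to\infty$ absorb the nonlinear forcing outside a large compact set and give $\m - \ein_3 \in H^2(\R^2)$. Differentiating the equation and iterating, using the two-dimensional embedding $H^2 \hookrightarrow C^0 \cap W^{1,p}$ which makes all nonlinear products estimable in $H^k$ via the induction hypothesis, produces $\m - \ein_3 \in H^k$ for every $k$.

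The main obstacle is the first step: the nonlinearity $|\nabla \m|^2 \m$ is exactly critical in two dimensions, so naive bootstrap from $H^1$ yields only $L^1$ control of the forcing and thus no Sobolev gain. The sphere-valued constraint on the target, which recasts this term as a sum of Jacobians and brings in Hardy-space integrability, is the essential structural ingredient; every step beyond that is standard elliptic bootstrapping.
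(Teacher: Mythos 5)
Your proposal follows essentially the same route as the paper's sketch: rewrite the criticality condition with the Lagrange multiplier $\Lambda(\m)$, exploit the sphere constraint to recast the critical term $|\nabla \m|^2 \m$ in div-curl/Jacobian form so that it lies in the Hardy space (giving continuity of $\m$), upgrade to smoothness by classical elliptic bootstrap, and finish with a global Sobolev bootstrap yielding $\m - \ein_3 \in H^k$ for all $k$. The only cosmetic difference is the final step, where the paper tests the equation with $\Delta^k \m$ and uses interpolation inequalities while you patch uniform local $H^2$ estimates using smallness of the energy on far-away unit balls; both are standard and equivalent in substance.
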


\begin{proof}[Sketch of the proof]
The claim follows from well-established strategies for the regularity of finite energy harmonic 
maps in two dimensions \cite{Helein_book}, starting from the equation $\bs \tau(\m) =0$, which may be written as
\[
-\Delta \m + 2 \nabla \times \m + h (\m-\ein_3) = \Lambda(\m) \m 
\]
with the Lagrange multiplier
\begin{equation}\label{eq:fm0}
\Lambda(\m) = |\nabla \m|^2 + 2\m \cdot (\nabla \times \m) + h(1-m_3).
\end{equation}
As $|\nabla \m|^2 \m = \bs B : \nabla \m$ with $\div \bs B \in L^2(\R^2;\R^3)$, it follows that $\Delta  \m$
belongs to the local Hardy space (see e.g. \cite{Taylor_Tools}) modulo $L^2$ perturbations. In turn, it follows that $\Delta \m \in L^1+L^2(\R^2;\R^3)$, 
hence $\m$ is uniformly continuous. A classical argument \cite{Ladyzhenskaya_Uralceva_book} yields $\m$ is smooth. Finally a bootstrapping argument based on testing with $\Delta^k \m$ and using interpolation inequalities yields an $H^k$-bound for every $k \in \N$.
\end{proof}

\subsection{Second variation}

Suppose $\m_0$ is a critical point of finite energy and $\bs \phi, \bs \psi \in H^2(\R^2;T_{\m_0}\St)$. Then
$\m_{st}=\pi(\m_0 + s \bs \phi + t \bs \psi)$ is a smooth map into $H^2(\R^2;T_{\m_0}\St)$ 
for $s$ and $t$ sufficiently small. The Hessian or second variation of $E$ at $\m_0$ 
is given by
\begin{equation}\label{eq:2nd}
\frac{\partial^2}{\partial s \partial t}\bigg|_{s,t=0} E(\m_{st}) 
 = \delta^2E(\m) \la \bs \phi, \bs \psi \ra -\delta E(\m) \la (\bs \phi \cdot \bs \psi)  \, \m \ra,
\end{equation}
where 
\[
\delta^2E(\m) \la \bs \phi, \bs \psi \ra = \frac{\partial^2}{\partial s \partial t}\bigg|_{s,t=0} E(\m + s \bs \phi +t \bs \psi).
\]
For a fixed critical point $\m_0 \in H^2(\R^2; \St)$ we shall denote the Hessian by
\begin{equation}\label{eq:H}
\mathcal{H}(\bs \phi, \bs \psi) = \mathcal H_{\infty}(\bs \phi, \bs \psi) - \Delta \mathcal{H}(\bs \phi, \bs \psi)
\end{equation}
with
\[
\mathcal H_{\infty}(\bs \phi, \bs \psi) = \delta^2E(\m_0) \la \bs \phi, \bs \psi \ra = \int_{\R^2}\left( \nabla \bs \phi : \nabla \bs \psi + 2\bs \phi \cdot (\nabla \times \bs \psi) + h \, \bs \phi \cdot \bs \psi  \right)\dif x
\]
and with \eqref{eq:fm0}
\begin{equation}\label{eq:DeltaH}
\Delta \mathcal{H}(\bs \phi, \bs \psi) = \delta E(\m_0) \la (\bs \phi \cdot \bs \psi) \, \m_0 \ra = \int_{\R^2} \Lambda(\m_0) (\bs \phi \cdot \bs \psi) \dif x.
\end{equation}
 We shall use the notation $\mathcal{H}(\bs \phi)=\mathcal{H}(\bs \phi, \bs \phi)$ and similar for 
 $\mathcal{H}_\infty$ and $\Delta \mathcal{H}$.
 
 \medskip

 In view of Proposition \ref{prop:gap} the Hessian has a bounded extension to $H^1(\R^2;\R^3)$.
 
 \subsection{Jacobi operator}
Using Lemma \ref{lemma:density} the Hessian defines the Jacobi operator 
\[
\mathcal{J}: H^2(\R^2; T_{\m_0}\St)  \to L^2(\R^2; T_{\m_0}\St) 
\]
by
\begin{equation} \label{eq:Hesse_Jacobi}
\scp{\mathcal{J} \bs \phi}{\bs \psi}_{L^2} = \mathcal{H}( \bs \phi, \bs \psi )
\quad \text{for all} \quad \bs \phi, \bs \psi \in H^2(\R^2; T_{\m_0}\St). 
\end{equation}
More explicitly, we have $\mathcal{J}=P_{\m_0} \circ \mathcal{L}$, where
\begin{equation}
\mathcal{L}\bs \phi = -\Delta \bs \phi+ 2 \nabla \times \bs \phi + (h-\Lambda(\m_0)) \bs \phi,
\end{equation}
extending $\mathcal{J}$ to $H^2(\R^2;\R^3)$ as a uniformly elliptic operator.
The linearization of the tension field is expressed in terms of the Jacobi operator, i.e., for a differentiable one-parameter family 
$s \mapsto \m_s \in H^2_{\ein_3}(\R^2;\St)$ with tangent field $\dot{\m}_0= \bs \phi$
\begin{equation}
\mathcal{L} \bs \phi = - \frac{\dif }{\dif s} \Big|_{s=0} \bs \tau(\m_s) \quad \text{hence} \quad \mathcal{J} \bs \phi = - \frac{\dif }{\dif s} \Big|_{s=0} \left( P_{\m_0} \bs \tau(\m_s)\right). 
\end{equation}
Jacobi fields $\bs \phi$ are solutions of $\mathcal{J}\bs \phi=0$. Every smooth family $\m_t$ with $|t|<\eps$
of critical points $\bs \tau(\m_t)=0$ gives rise to a Jacobi field $\bs \phi= \dot{\m}_0$. In particular,
the translation invariance of the energy implies for every critical point $\m_0 \in H^2_{\ein_3}(\R^2;\St)$:
\begin{equation}\label{eq:kernelJacobi}
\mathcal{J}((c \cdot\nabla ) \m_0)=0 \quad \text{for every  } c \in\R^2.
\end{equation}
 Theorem \ref{thm:stability} states that for the axisymmetric skyrmion every Jacobi field is obtained in this manner.

\subsection{Axisymmetric critical points}
Letting $\ein_r = \left(\frac x r,0\right)$ one obtains for axisymmetric $\m_0$ of the form of $\eqref{eq:m0}$
\begin{equation}
\m_0 \times \bs h_{\rm eff}(\m_0) =  - \left( \theta''+\frac{1}{r}\theta'-\frac{1}{r^2} \sin \theta \cos \theta + \frac{2}{r} \sin^2 \theta - h \sin \theta \right) \ein_r.
\end{equation}
In particular, $\m_0$ is a critical point iff the corresponding polar profile $\theta$ satisfies the ordinary differential equation \eqref{eq:thetaODE}, the Euler-Lagrange 
equation of for the radial energy, see \cite{bogdanov1994thermodynamically, bogdanov1999stability, leonov2016properties}
\begin{equation} \label{eq:radial_energy}
\mathcal E(\theta) = 2\pi \int_0^{\infty}  \left(  \frac{(\theta')^2}{2}  + \frac{\sin^2 \theta}{2r^2}  +   \theta' +  \frac{\sin \theta \cos \theta}{r} + h  (1-\cos \theta) \right) r \dif r.
\end{equation}
The finite energy condition requires $\theta(r) \to 0$ (modulo $2\pi$) as $r \to \infty$. The topological constraint $Q(\m_0)=-1$ imposes $\theta(0)=\pi$, i.e. \eqref{eq:thetaBoundary}.

\medskip

Existence of axisymmetric critical points follows from a variational argument based on energy bounds obtained in \cite{melcher2014chiral}. Recall that to ensure absolute integrability, the helicity term $E_H=E_H(\m)$ has been modified by a
null-Lagrangian, i.e., has been replaced by
\[
(\m - \ein_3) \cdot( \nabla \times \m) \quad \text{or} \quad (1-\cos \theta) \left( \theta' -  \frac{\sin \theta}{r} \right), 
\]
respectively. For a minimizing sequence $\theta_k$ for $\mathcal{E}$ satisfying \eqref{eq:thetaBoundary}, one may consider the corresponding sequence $\m_k$ of axisymmetric fields.
For $h>1$ the compactness argument in \cite{melcher2014chiral} implies $H^1$ subconvergence to an axisymmetric field $\m_0$ of the form \eqref{eq:m0}. In fact, the requisite topological lower bounds, the coercivity estimate, and the upper bound in \cite{melcher2014chiral} based on an axisymmetric construction hold true. Moreover, it follows that
$\theta_k \to \theta$ in $H^1_{\rm loc}(0,\infty)$ and $\mathcal{E}(\theta) = \lim_{k \to \infty} \mathcal{E}(\theta_k)$, i.e., $\theta$ is a minimizer and satsfies \eqref{eq:thetaODE},
 Finally, 
\[
\left| \frac{\dif \cos \theta_k}{\dif r}\right| \le   \frac{(\theta_k')^2}{2}  + \frac{\sin^2 \theta_k}{2r^2} = \frac{|\nabla \m_k|^2}{2}
\]
is uniformly integrable near $r=0$, hence $\cos \theta_k$ is equicontinuous near $r=0$, thus $\theta(0)=\pi$. A similar argument implies $\theta(r) \to 0$ as $r \to \infty$, hence
\eqref{eq:thetaBoundary}.

\medskip

Our analysis relies on some properties of the polar profile $\theta$: its monotonicity and certain decay properties. The main results needed are summarized in the following proposition,
whose proof is deferred to the appendix. 
\begin{proposition}\label{thm:theta}
For $h \gg 1$, solutions of (\ref{eq:thetaODE}), (\ref{eq:thetaBoundary}) are unique, strictly decreasing, and satisfy the following estimates for all $r \in (0,\infty)$
\begin{equation}\label{eq:theta_cos}
|\cos \theta - r\sin \theta| < \frac{3}{2},
\end{equation}
\begin{equation}\label{eq:theta_sin}
r^2(\theta'(r))^2 \geq \sin^2 \theta(r),
\end{equation}
and
\begin{equation}\label{eq:theta_h}
h-\frac{3}{2r}\sin \theta \geq 0.
\end{equation}

\end{proposition}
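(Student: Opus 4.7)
The plan is to rescale the radial variable via $\rho = r\sqrt{h}$, which converts \eqref{eq:thetaODE} into
\[
\theta_{\rho\rho} + \frac{1}{\rho}\theta_\rho - \frac{\sin\theta\cos\theta}{\rho^2} + \frac{2\eps}{\rho}\sin^2\theta - \sin\theta = 0, \qquad \eps := h^{-1/2},
\]
with the same boundary data $\theta(0)=\pi$, $\theta(\infty)=0$. At $\eps=0$ this is the classical radial Euler--Lagrange equation for a hedgehog harmonic map into $\St$ with a mass-type potential, which admits a unique strictly decreasing solution $\theta_\star$ with affine behaviour $\theta_\star(\rho) = \pi - \alpha_\star \rho + O(\rho^3)$ near the origin and exponential decay $\theta_\star(\rho) \sim c_\star K_1(\rho)$ at infinity, the latter being dictated by the modified Bessel equation obtained upon linearization around $\theta=0$. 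The entire proposition should be read as a small-$\eps$ perturbation statement around this limit profile.

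For uniqueness at large $h$, I would combine an implicit function argument at $\theta_\star$ with a shooting/continuation argument in the parameter $\alpha = -\theta'(0)$. The radial linearization of the $\eps=0$ equation is a Sturm--Liouville operator on a suitable weighted $L^2$ space; its kernel in the class of profiles compatible with the boundary conditions is trivial, since the mass term breaks dilation invariance and the dilation candidate $\rho\theta_\star'$ fails the decay condition at infinity. Non-degeneracy persists under the $O(\eps)$ helicity perturbation, yielding local uniqueness, and monotonicity of the shooting map $\alpha \mapsto \lim_{\rho \to \infty}\theta(\rho;\alpha)$ upgrades this to global uniqueness. Strict monotonicity I would prove by contradiction: at a first interior critical point $r_0 > 0$ the ODE reduces to
\[
\theta''(r_0) = \frac{\sin\theta\cos\theta}{r_0^2} - \frac{2\sin^2\theta}{r_0} + h\sin\theta,
\]
whose right-hand side is strictly positive for $\theta(r_0)\in(0,\pi)$ and $h\gg 1$, excluding a local maximum followed by descent to $0$; together with $\theta(0)=\pi$, $\theta(\infty)=0$, this rules out any critical point.

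The three pointwise estimates form the technical heart. For \eqref{eq:theta_cos} I would study the auxiliary quantity $F(r) := \cos\theta(r) - r\sin\theta(r)$ with $F(0) = -1$, compute $F'$ and $F''$ using \eqref{eq:thetaODE}, and obtain a differential inequality trapping $F$ in $(-3/2, 3/2)$ via barrier/maximum principle arguments, using strict monotonicity of $\theta$ to control the sign of the coefficients. For \eqref{eq:theta_sin}, the Pohozaev-type quantity $G(r) := r^2(\theta')^2 - \sin^2\theta$ vanishes at $r=0$; computing $G'$ via the ODE and invoking monotonicity produces a sign for $G'$ compatible with $G \geq 0$ throughout. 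Finally \eqref{eq:theta_h} reduces to the uniform bound $\sin\theta(r)/r \lesssim \sqrt{h}$, which follows from matched asymptotics: near $r=0$ one has $\sin\theta/r \to |\theta'(0)| = O(\sqrt h)$ after rescaling, while for $r$ bounded away from $0$ the exponential decay of $\theta_\star$ takes over, so $\tfrac{3}{2r}\sin\theta \leq C\sqrt h \ll h$ for $h$ large.

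The main obstacle will be designing the auxiliary quantities so as to produce the sharp constants $3/2$ appearing in \eqref{eq:theta_cos} and \eqref{eq:theta_h}, rather than some generic $O(1)$ constant. The precise numerical values are what make the Fourier-mode argument of Section \ref{sec:stab} close: any weakening of these inequalities destroys the non-negativity of the low-order modes of the Hessian and breaks the whole stability proof. The helicity-induced term $2\sin^2\theta/r$ in the ODE, which has no analogue for classical harmonic maps, makes standard comparison arguments inapplicable, so the proof will depend on exploiting the specific cancellations between this term, the background field contribution $h\sin\theta$, and the curvature term $r^{-2}\sin\theta\cos\theta$.
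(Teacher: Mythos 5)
Your proposal rests on a foundation that does not exist. After the rescaling $\rho=\sqrt{h}\,r$, the helicity term is the \emph{only} term carrying the small factor $\eps=h^{-1/2}$, and at $\eps=0$ the limiting problem
\[
\theta_{\rho\rho}+\tfrac{1}{\rho}\theta_\rho-\tfrac{\sin\theta\cos\theta}{\rho^2}-\sin\theta=0,\qquad \theta(0)=\pi,\ \theta(\infty)=0,
\]
has \emph{no} finite-energy solution: its energy consists of the scale-invariant exchange terms plus the Zeeman term, which scales like $\lambda^2$ under dilation, so the Pohozaev/Derrick identity forces the Zeeman energy of any critical point to vanish, i.e.\ $\theta\equiv 0$, incompatible with $\theta(0)=\pi$. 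This is precisely the ``Derrick--Pohozaev non-existence criterion'' that the paper's introduction says is suspended only by helicity. Consistently, the true profile degenerates in your rescaled variable: by the paper's Lemma \ref{la:thetaAsymtot}, $\theta'(0)=-h/2$, so the core size is $O(1/h)$ in $r$, i.e.\ $O(\eps)$ in $\rho$ --- the skyrmion collapses as $\eps\to 0$ rather than converging to a profile $\theta_\star$. Hence every step you build on $\theta_\star$ (implicit-function uniqueness, non-degeneracy of its linearization, matched asymptotics, the Bessel-type decay) has no object to apply to. The same collapse breaks your monotonicity argument: at a hypothetical critical point $r_0$ with $\theta(r_0)$ near $\pi$, writing $\theta=\pi-u$, the right-hand side of your identity is $\approx \sin\theta\,\bigl(h-r_0^{-2}-2\sin\theta/r_0\bigr)$, which is \emph{negative} whenever $r_0\lesssim h^{-1/2}$; interior critical points near the origin are not excluded by positivity of $h$, which is why the paper needs the delicate phase-plane analysis of $\phi(t)=\pi-\theta(e^{-t})$ in Lemma \ref{la:thetaMonoton}.

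There is also a quantitative error in your treatment of \eqref{eq:theta_h}: you claim $\sin\theta(r)/r\lesssim\sqrt h$, but in fact $\sin\theta(r)/r\to h/2$ as $r\to 0$ (again from $\theta'(0)=-h/2$), so $h-\tfrac{3}{2r}\sin\theta\to h/4$ at the origin and the inequality is sharp in the sense that the constant $3/2$ cannot be replaced by anything $\ge 2$; it is a genuine sharp-constant statement requiring the exact origin asymptotics plus a global argument (the paper shows $hr-\tfrac32\sin\theta$ would otherwise need two zeros, impossible by monotonicity of $\theta$), not a soft $O(\sqrt h)\ll h$ bound. Where your proposal does align with the paper is in the choice of auxiliary functions: the paper indeed studies $r\sin\theta-\cos\theta$ and $r^2(\theta')^2-\sin^2\theta$ with maximum-principle/zero-counting arguments, and you correctly identify that producing the constant $3/2$ is the crux. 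But the missing ingredient there is not a cleverer barrier: the paper gets it from the energy upper bound $E(\m)<4\pi$ of \cite{melcher2014chiral}, which forces the radius $r^*$ where $\theta=\pi/2$ to satisfy $r^*<2/\sqrt{h-1}<1/2$, whence $|\cos\theta-r\sin\theta|<1+r^*<3/2$. Uniqueness, finally, is proved in the paper by a sliding/translation comparison of two solutions in the far field using the exponential decay --- an argument at finite large $h$, with no reference to any limiting profile.
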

%We note that axisymmetric skyrmions with $Q \in \{-1,0\}$ are the only possible stable axisymmetric fields.

\subsection{Axisymmetric non-existence}
Finally we note that, relative to the given background state $\m(\infty)=\ein_3$, axisymmetric chiral skyrmions are only possible for $Q =-1$.

\begin{proposition} For $N \in \Z \setminus \{0,-1\}$ there are no axisymmetric critical points
\[
\m(r e^{\rm{i} \psi})=\left( e^{\mathrm{i}(\gamma-N \psi)} \sin \theta(r), \cos \theta(r) \right)
\]
for any phase $0\le \gamma <2\pi$.
\end{proposition}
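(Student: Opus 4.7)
The plan is a Derrick/Pohozaev scaling argument, hinged on the observation that for $N \neq -1$ the helicity contribution to the energy vanishes identically on the ansatz class.

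First I would compute the helicity via the integrable expression
\[
E_H(\m) = 2\int_{\R^2}(m_1\partial_2 m_3 - m_2\partial_1 m_3)\dif x
\]
from \eqref{eq:helicity}. Writing $m_1 + \mathrm{i}m_2 = \sin\theta(r)\,e^{\mathrm{i}(\gamma - N\psi)}$, $m_3 = \cos\theta(r)$, and using $\nabla m_3 = -\sin\theta\,\theta'(r)(\cos\psi,\sin\psi)$, a short trigonometric manipulation yields
\[
m_1\partial_2 m_3 - m_2\partial_1 m_3 = \sin^2\theta(r)\,\theta'(r)\,\sin(\gamma - (N+1)\psi).
\]
Passing to polar coordinates and integrating in $\psi$ first, the angular integral vanishes whenever $N \neq -1$, so $E_H(\m) = 0$. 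Consequently, on this class of maps the functional reduces to a pure Heisenberg-Zeeman energy
\[
E(\m) = \pi\int_0^\infty\left(r(\theta')^2 + \frac{N^2\sin^2\theta}{r} + 2hr(1-\cos\theta)\right)\dif r,
\]
with no chiral contribution at all.

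To conclude, I would invoke the Derrick scaling argument. By Proposition \ref{prop:H^2}, any finite-energy critical point $\m$ lies in $H^\infty_{\ein_3}(\R^2;\St)$, so the one-parameter family $\m_\lambda(x) = \m(\lambda x)$, which manifestly preserves the ansatz class, is an admissible variation with generator $(x\cdot\nabla)\m \in H^1(\R^2;T_{\m}\St)$. In two dimensions both Dirichlet-type contributions $\int r(\theta')^2\dif r$ and $\int N^2\sin^2\theta/r\,\dif r$ are conformally invariant, while $E_Z(\m_\lambda) = \lambda^{-2}E_Z(\m)$, so the critical-point property gives
\[
0 = \frac{\dif}{\dif\lambda}\Big|_{\lambda=1}E(\m_\lambda) = -2E_Z(\m),
\]
which forces $\int_{\R^2}|\m-\ein_3|^2\dif x = 0$ and hence $\m \equiv \ein_3$, contradicting any non-trivial ansatz.

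The main technical point to justify carefully is that the scaling generator $(x\cdot\nabla)\m$ lies in $H^1(\R^2;T_{\m}\St)$ so as to be admissible; this follows from the smoothness and decay of critical points granted by Proposition \ref{prop:H^2}. An equivalent route is the direct Pohozaev identity obtained by testing the Euler-Lagrange equation $\m\times\bs h_{\rm eff}(\m) = 0$ against $P_{\m}[(x\cdot\nabla)\m]$ and integrating by parts, which avoids explicit scaling but demands care with boundary behaviour at $r=0$ and $r=\infty$.
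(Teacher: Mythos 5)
Your proposal is correct and takes essentially the same route as the paper: the angular integration showing $E_H(\m)=0$ for $N\neq -1$ is exactly the paper's ``straightforward calculation,'' and your scaling argument is the Derrick--Pohozaev identity that the paper invokes in the form $E_H(\m)=-2E_Z(\m)$ for critical points, which combined with $E_H(\m)=0$ forces $E_Z(\m)=0$ and hence triviality. The only cosmetic difference is that you specialize the scaling computation to the ansatz class (where helicity already vanishes) rather than quoting the general identity first.
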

\begin{proof}
The usual Derrick-Pohozaev argument implies $E_H(\m)=-2E_Z(\m)$ for any 
critical point $\m$, where $E_H(\m)$ is given by \eqref{eq:helicity} and
\[
E_Z(\m)=\int_{\R^2} \frac{h}{2}|\m-\ein_3|^2 \dif x.
\]
is the Zeeman energy. On the other hand, a straightforward calculation yields that for an axisymmetric field with $N \not=0$
 \[
 E_H(\m) = - 2\int_0^\infty \sin^{2}\theta(r)\; \theta'(r)r \dif{r}  \int_0^{2\pi} \sin((N+1) \psi - \gamma) \dif{\psi} 
\]
vanishes unless $N = -1$. 
\end{proof}

Note that for $N=-1$ the helicity is minimized for $\gamma=\frac{\pi}{2}$.

\section{Linear stability}
\label{sec:stab}

In this section we prove non-negativity of the Hessian $ \mathcal H$ for large $h$.
\begin{proposition} \label{prop:stability}
For $h \gg 1$ the Hessian $\mathcal H$ satisfies
\begin{itemize}
\item[(i)] $\mathcal{H}(\bs \phi) \geq 0$ for all $\bs \phi \in H^1(\R^2;T_{\m_0}\St)$ 
\item[(ii)] $\mathcal{H}(\bs \phi)= 0$ if and only if $\bs \phi \in \mathrm{span}\{\del_1 \m_0, \del_2 \m_0 \}$.
\end{itemize}
\end{proposition}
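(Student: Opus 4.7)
The plan is to follow the Fourier/moving-frame strategy outlined in the introduction, adapting the approach of \cite{mironescu1995stability, ignat2015stability, ignat2016stability} to the chiral setting. First I would introduce a smooth orthonormal frame $\{\bs e_1, \bs e_2\}$ for $T_{\m_0}\St$ adapted to the axisymmetry of $\m_0$: for instance take $\bs e_1$ along $\partial_\theta \m_0$ evaluated at $\theta = \theta(r)$ and $\bs e_2 = \m_0 \times \bs e_1$. Any $\bs \phi \in H^1(\R^2;T_{\m_0}\St)$ then decomposes as $\bs \phi = u \bs e_1 + v \bs e_2$ with scalar $u,v \in H^1(\R^2)$; by Lemma \ref{lemma:density} it suffices to work with compactly supported smooth data.

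Next I would substitute this representation into the Hessian $\mathcal H = \mathcal H_\infty - \Delta \mathcal H$ given by \eqref{eq:H}--\eqref{eq:DeltaH} and carry out the resulting connection-form bookkeeping, exploiting that the $\psi$-dependence of the frame is pure rotation at each radius. The outcome is a quadratic form in $(u,v)$ whose coefficients depend only on $r$, $\theta(r)$, and $\theta'(r)$. The Fourier expansion
\[
u(r,\psi) = \sum_{n \in \Z} a_n(r)\, e^{\mathrm{i} n \psi}, \qquad v(r,\psi) = \sum_{n \in \Z} b_n(r)\, e^{\mathrm{i} n \psi}
\]
diagonalizes the Hessian in the Fourier index,
\[
\mathcal H(\bs \phi) = \sum_{n \in \Z} \mathcal H_n(a_n, b_n),
\]
reducing (i) to proving $\mathcal H_n \geq 0$ mode by mode on a weighted Sobolev space on $(0,\infty)$.

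For high modes $|n| \geq 2$ the angular contribution $(n^2/r^2)(|a_n|^2 + |b_n|^2)$ dominates the remaining terms once combined with the Zeeman bound $h - \frac{3}{2r}\sin\theta \geq 0$ from \eqref{eq:theta_h} and the pointwise estimate \eqref{eq:theta_cos}; this should yield strict positivity by a short direct computation. For $n=0$ the $u_0$-component corresponds to axisymmetric variations of the polar profile, and its non-negativity follows from the fact that $\theta$ minimizes the radial energy \eqref{eq:radial_energy} under the boundary conditions \eqref{eq:thetaBoundary}; the $v_0$-component is not a symmetry generator and should be controlled pointwise by combining \eqref{eq:theta_sin} with \eqref{eq:theta_h}.

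The main obstacle is the pair of modes $n = \pm 1$, which contain the translational zero modes. A direct calculation shows that $\partial_1 \m_0 \pm \mathrm{i} \partial_2 \m_0$, decomposed in the moving frame, is an explicit combination of $\theta'(r)$ and $\sin\theta(r)/r$ with pure $e^{\pm \mathrm{i} \psi}$ phase and therefore lies entirely in the $n = \pm 1$ sectors. The key step is to find first-order radial differential operators $L_{\pm 1}$ such that
\[
\mathcal H_{\pm 1}(a,b) = \bigl\|L_{\pm 1}(a,b)\bigr\|_{L^2}^2 + R_{\pm 1}(a,b)
\]
with $R_{\pm 1} \geq 0$, constructed so that $L_{\pm 1}$ annihilates precisely these explicit profiles. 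Producing such a factorization is the delicate computation; it should rest on substituting the Euler--Lagrange equation \eqref{eq:thetaODE} into the cross terms, using \eqref{eq:theta_cos} to balance the helicity contribution, \eqref{eq:theta_sin} to absorb a Hardy-type term, and \eqref{eq:theta_h} to supply the Zeeman margin. Combining strict positivity at every $n \neq \pm 1$ with the one-dimensional explicit kernels at $n = \pm 1$ then establishes (i) and identifies the full kernel as $\mathrm{span}\{\partial_1 \m_0, \partial_2 \m_0\}$, yielding (ii).
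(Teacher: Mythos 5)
Your overall architecture (moving frame, Fourier diagonalization, reduction to the lowest modes, factorization at the translational modes) matches the paper's, and your idea of controlling the axisymmetric $\partial_\theta \m_0$-component of the zeroth mode by the minimality of $\theta$ for the radial energy \eqref{eq:radial_energy} is a legitimate shortcut for that one piece. But two of your steps fail as stated. First, the zeroth-mode component along $\m_0 \times \partial_\theta \m_0$ (your $v_0$, the paper's $u_1$) cannot be ``controlled pointwise'': its potential is $f(r,\theta)=\frac{\cos^2\theta}{r^2}-(\theta')^2-2\theta'-\frac{2}{r}\sin\theta\cos\theta+h\cos\theta$, and at the radius $r^*$ where $\theta=\pi/2$ this equals $|\theta'|\,(2-|\theta'|)<0$, since \eqref{eq:theta_sin} forces $|\theta'(r^*)|\ge 1/r^*$ while $r^*<1/2$ for large $h$. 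So $f$ is genuinely negative there, and one needs the integral Hardy-type decomposition with weight $\frac{\sin\theta}{r}$ (the paper's Lemma \ref{la:Hardy} with $\psi=\frac{\sin\theta}{r}$), not a pointwise bound. Second, your claim that the modes $|n|\ge 2$ are positive ``by a short direct computation'' because $n^2/r^2$ dominates is unsupported: the potentials contain $-(\theta')^2$, and none of \eqref{eq:theta_cos}, \eqref{eq:theta_sin}, \eqref{eq:theta_h} provides an \emph{upper} bound on $r^2(\theta')^2$, so pointwise positive semidefiniteness of the mode-$n$ coefficient matrix cannot be deduced from the available estimates. The paper sidesteps this with a different idea: the difference of consecutive modes satisfies, pointwise in the integrand,
\begin{equation*}
\frac{2k+1}{r^2}\left(\alpha^2+\beta^2\right)+\frac{4}{r^2}\left(\cos\theta-r\sin\theta\right)\alpha\beta \;\ge\; \frac{3}{r^2}\left(|\alpha|-|\beta|\right)^2\;\ge\;0 \qquad (k\ge 1),
\end{equation*}
using only \eqref{eq:theta_cos}, so $\mathcal H_{k+1}\ge \mathcal H_k$ and all high modes reduce to mode $1$ --- which is precisely where your proposed factorization lives, and which you leave as an unproven ``delicate computation''; in the paper this is the substitution $\alpha=\frac{\sin\theta}{r}\xi$, $\beta=-\theta'\eta$ followed by an explicit completion of squares (Lemmas \ref{la:PositiveH0} and \ref{la:PositiveH1}).

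Part (ii) also needs more than ``explicit kernels at $n=\pm1$''. The frame is singular at the origin, so the whole Fourier machinery only applies to fields supported away from $0$; the paper first uses \eqref{eq:add_kernel} together with \eqref{eq:non_deg} to subtract a translation so that $\bs \phi(0)=0$, then truncates (Lemma \ref{lemma:approximation}). Moreover, for an $H^1$ field with $\mathcal{H}(\bs \phi)=0$ one must upgrade regularity (Lemma \ref{lemma:H^2} plus bootstrapping) before any of this applies, and then argue that the truncated coefficients $\xi_\eps,\eta_\eps$ converge in measure to a constant which must vanish because $\frac{\sin\theta(r)}{r}\to \frac h2$ and $-\theta'(r)\to \frac h2$ as $r\to 0$. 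This last step is essential: the translations correspond to $\xi=\eta=\mathrm{const}$, which are \emph{not} compactly supported, so strict positivity of $\tilde{\mathcal H}_1$ on $C_c^\infty(0,\infty)$ does not by itself identify the $H^1$ kernel. This limiting argument is absent from your proposal.
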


\subsection{Approximation}
Proving (i) we can assume $\bs \phi \in C^{\infty}_c(\R^2 \setminus \{0\}; T_{\m_0}\St)$.
In fact, by Lemma \ref{lemma:density} and the $H^1$ continuity of the Hessian $\mathcal H$, it is sufficient to prove non-negativity for $\bs \phi \in H^{\infty}(\R^2;T_{\m_0} \St)$.  Moreover, it follows from \eqref{eq:theta_0} that
\[
\del_j  \m_0  \cdot \ein_k |_{x=0}= \frac{h}{2} \epsilon_{jk} \quad \text{for} \quad 1 \le j,k \le 2, 
\]
hence 
\begin{equation} \label{eq:non_deg}
\mathrm{span}\{ \del_1 \m_0, \del_2 \m_0\} |_{x=0}=T_{\m_0(0)} \St.
\end{equation}
Since by \eqref{eq:Hesse_Jacobi} and \eqref{eq:kernelJacobi} 
\begin{equation} \label{eq:add_kernel}
\mathcal{H}(\bs \phi + (c\cdot \nabla) \m_0, \bs \psi)=\mathcal{H}(\bs \phi, \bs \psi)  \quad \text{for all} \quad c \in \R^2, 
\end{equation}
in particular $\mathcal{H}(\bs \phi) =\mathcal{H}(\bs \phi + (c\cdot \nabla) \m_0)$, we can arrange $\Phi(0)=0$, and the following truncation lemma applies.

\begin{lemma}\label{lemma:approximation}
Suppose $\bs \phi \in H^{\infty}(\R^2;T_{\m_0} \St)$ satisfies $\bs \phi(0)=0$. Given $0<\eps<1$ there exists $\bs \phi_\eps \in C^{\infty}_c(\R^2;T_{\m_0} \St)$
such that  $\bs \phi_\eps = \bs \phi(x)$ for $\eps \le |x| \le 1/\eps$ and $\bs \phi_\eps \to \bs \phi$ in $H^1$ as $\eps \to 0$.
\end{lemma}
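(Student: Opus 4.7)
The plan is to obtain $\bs\phi_\eps$ as a radial scalar cutoff of $\bs\phi$, which automatically preserves tangency to $\m_0$ since pointwise multiplication by a scalar function leaves the direction unchanged. Fix smooth one-dimensional cutoffs $\chi,\psi:[0,\infty)\to[0,1]$ with $\chi\equiv 0$ on $[0,1/2]$, $\chi\equiv 1$ on $[1,\infty)$, $\psi\equiv 1$ on $[0,1]$, $\psi\equiv 0$ on $[2,\infty)$, and define
\begin{equation*}
\bs\phi_\eps(x):=\chi(|x|/\eps)\,\psi(\eps|x|)\,\bs\phi(x).
\end{equation*}
Then $\bs\phi_\eps$ is smooth, vanishes on $\{|x|\le\eps/2\}\cup\{|x|\ge 2/\eps\}$, agrees with $\bs\phi$ on the required annulus $\{\eps\le|x|\le 1/\eps\}$, and remains pointwise orthogonal to $\m_0$. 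Hence $\bs\phi_\eps\in C^\infty_c(\R^2;T_{\m_0}\St)$.

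For the $H^1$ convergence, write $\bs\phi-\bs\phi_\eps=(1-\chi(|x|/\eps)\psi(\eps|x|))\bs\phi$. Since the cutoff product is bounded by $1$ and equals $1$ outside a set shrinking to $\{0\}\cup\{\infty\}$, dominated convergence yields $\|\bs\phi_\eps-\bs\phi\|_{L^2}\to 0$ as well as convergence of the $(\chi\psi-1)\nabla\bs\phi$ term. The remaining contributions to $\nabla(\bs\phi_\eps-\bs\phi)$ come from derivatives falling on the cutoffs. The outer one is standard: $|\nabla\psi(\eps\,\cdot\,)|\le C\eps$, and so
\begin{equation*}
\int_{|x|\ge 1/\eps}|\nabla\psi(\eps|x|)|^2|\bs\phi|^2\dif x \le C\eps^2\|\bs\phi\|_{L^2}^2\longrightarrow 0.
\end{equation*}

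The one genuine obstacle is the inner cutoff, where $|\nabla\chi(\,\cdot\,/\eps)|\le C/\eps$ is supported on the annulus $\{\eps/2\le|x|\le\eps\}$ of area $O(\eps^2)$; a crude $L^\infty$ bound on $\bs\phi$ would yield an $O(1)$ error that does not vanish. This is precisely where the hypothesis $\bs\phi(0)=0$ enters, together with $\bs\phi\in H^\infty\subset C^1$, to give $|\bs\phi(x)|\le C|x|\le C\eps$ on the relevant annulus. Consequently
\begin{equation*}
\int_{\eps/2\le|x|\le\eps}|\nabla\chi(|x|/\eps)|^2|\bs\phi(x)|^2\dif x \;\lesssim\; \frac{1}{\eps^2}\cdot\eps^2\cdot\eps^2 \;=\; \eps^2 \longrightarrow 0,
\end{equation*}
and the proof is complete. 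The normalization $\bs\phi(0)=0$ secured via \eqref{eq:add_kernel}--\eqref{eq:non_deg} is therefore indispensable; without the pointwise vanishing at the origin the $O(1/\eps)$ cutoff derivative could not be absorbed.
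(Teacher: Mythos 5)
Your proof is correct and takes essentially the same route as the paper's: a scalar radial cutoff of $\bs\phi$ (which preserves tangency to $\m_0$), with the inner cutoff's $O(1/\eps)$ derivative absorbed by the smallness of $\bs\phi$ near the origin coming from $\bs\phi(0)=0$. The only difference is in bookkeeping: the paper needs just the modulus of continuity $\omega_\eps=\sup_{B_\eps}|\bs\phi|\to 0$ and concludes via Vitali's convergence theorem, whereas you use the $C^1$ bound $|\bs\phi(x)|\lesssim |x|$ (available since $\bs\phi\in H^\infty$) to get a quantitative $O(\eps^2)$ error; both are valid.
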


\begin{proof} Let $\rho \in C^\infty(\R^2;[0,1])$ with $\rho=0$ near the origin and $\rho=1$ outside $B_1$, and $\bs \phi_\eps= \rho_\eps \bs \phi$ where $\rho_\eps(x)=\rho(x/\eps)$. 
Clearly $\bs \phi_\eps \to \bs \phi$ in $L^2$ and $\nabla \bs \phi_\eps = \rho_\eps \nabla  \bs \phi +\nabla \rho_\eps \otimes \bs  \phi   \to\nabla  \bs \phi$ pointwise outside the originas $\eps \to 0$ . Moreover 
\[
|\nabla \bs \phi_\eps|^2 \le 2  |\nabla \bs \phi|^2 + 2  \left( \frac{L \omega_\eps}{\eps} \right)^2 \chi_{B_\eps}, 
\]
where $L =\sup_{x \in B_1}|\nabla \rho(x)|$ and $\omega_\eps=\sup_{x \in B_\eps}|\bs \phi(x)| \to 0$ as $\eps \to 0$. Hence $|\nabla \bs \phi_\eps|^2$ is tight and equiintegrable and 
therefore $\nabla \bs \phi_\eps \to 
\nabla \bs \phi$ in $L^2$ as $\eps \to 0$ by Vitali's convergence theorem. Truncation near infinity is standard. 
\end{proof}

% Defining Lipschitz functions $\tilde{\bs \phi}_\eps(r e^{\mathrm{i}\psi})= \bs \phi ( \max\{r-\eps, 0\} e^{\mathrm{i}\psi})$ for $\eps>0$, we have that $\tilde{\bs \phi}_\eps \to \bs \phi$ pointwise and $\|\tilde{\bs \phi}_\eps\|_{H^1} \to \|\bs \phi\|_{H^1}$ as $\eps \to 0$, taking into account the smoothness of $\bs \phi$, hence $\tilde{\bs \phi}_\eps \to \bs \phi$ in $H^1$. Upon truncating and smoothing, we can assume $\tilde{\bs \phi}_{\eps} \in C^\infty_c(\R^2 \setminus \{0\};\R^3)$. Then $\bs \phi_\eps=P_{\m_0} \tilde{ \bs \phi}_\eps  \in C^\infty_c(\R^2 \setminus \{0\};T_{\m_0}\St)$ is uniformly $H^1$ bounded for $\eps \ll 1$ by virtues of Proposition \ref{prop:H^2}, and $\bs \phi_\eps \to \bs \phi$ in $L^2$ as $\eps \to 0$. Hence for a suitable sequence $\eps_n \to 0$, a sequence of convex combinations of $\bs \phi_{\eps_n}$ converges strongly in $H^1$, by virtue of Mazur's lemma. 
%
In the following sections we shall show that $\mathcal{H}(\bs \phi) > 0$ for all $\bs \phi \in C^{\infty}_c(\R^2 \setminus \{0\};T_{\m_0}\St) \setminus \{0\}$, proving
in particular claim (i) of Proposition \ref{prop:stability}. Once we have shown that $\mathcal H$ is positive semi-definite and satisfies Cauchy-Schwarz, it follows that
\[
\mathcal{H}(\bs\phi)=0 \quad\text{if and only if}\quad \mathcal{H}(\bs\phi,\bs\psi)=0 \quad\text{for all } \bs\psi \in H^1(\R^2; T_{\m_0}\St).
\]
Thus if $\mathcal{H}(\bs \phi)=0$ for some $\bs \phi \in H^1(\R^2;T_{\m_0} \St)$, Lemma \ref{lemma:H^2} below and a bootstrapping argument imply
$\bs \phi \in H^{\infty}(\R^2;T_{\m_0} \St)$. Taking into account \eqref{eq:non_deg} and \eqref{eq:add_kernel}, claim (ii) of Proposition \ref{prop:stability} 
follows if $\mathcal{H}(\bs \phi)=0$ and $\bs \phi(0)=0$ implies $\bs \phi \equiv 0$ in $\R^2$. By virtue of the truncation lemma, this will also follow from the estimates below.

%Since by \eqref{eq:Hesse_Jacobi} and \eqref{eq:kernelJacobi} 
%\begin{equation} \label{eq:add_kernel}
%\mathcal{H}(\bs \phi + (c\cdot \nabla) \m_0, \bs \psi)=\mathcal{H}(\bs \phi, \bs \psi)  \quad \text{for all} \quad c \in \R^2,
%\end{equation}
%we can assume 
%$\bs \phi \in H^{\infty}(\R^2;T_{\m_0} \St)$ with $\bs \phi(0)=0$. 
%It is sufficient to show that $\mathcal{H}(\bs \phi)>0$ for all $\bs \phi \in H^{\infty}(\R^2;T_{\m_0} \St)$ with $\bs \phi(0)=0$ unless $\bs \phi \equiv 0$. According to the approximation argument beforehand, we shall show that
%\begin{enumerate}[(i)]
% \item $\mathcal{H}(\bs \phi)>0$ for all nonzero $\bs \phi \in C^\infty_c(\R^2 \setminus \{0\};T_{\m_0} \St)$, and
% \item if $\mathcal{H}(\bs \phi_n) \to 0$ for some sequence $\bs \phi_n \in C_c^\infty(\R^2 \setminus \{0\};T_{\m_0} \St)$, then $\bs \phi_n \to 0$ pointwise as $n \to \infty$,
%\end{enumerate}
%proving Proposition \ref{prop:stability}.

\subsection{Moving frames}
We project the tangent field $\bs\phi$ onto an appropriate orthogonal frame in $T_{\m_0} \St$. For the smooth axisymmetric critical point of the form \eqref{eq:m0}
%\[
%\m_0(r e^{\mathrm{i} \psi})=(-\sin \psi \sin \theta(r) , \cos \psi  \sin \theta(r) , \cos  \theta(r))
%\]
we choose smooth tangent vector fields
\begin{align*}
\bs X = (\cos \psi , \sin \psi , 0) \quad \text{and} \quad
\bs Y = (-\sin \psi \cos \theta(r), \cos \psi \cos\theta(r), - \sin \theta(r))
\end{align*}
on $\R^2 \setminus \{0\}$, satisfying
\[
\m_0 = \bs X \times \bs Y, \quad |\bs X|=|\bs Y|=1, \quad \bs X \cdot \bs Y=0.
\]
Hence $(\bs X,\bs Y)$ forms a smooth orthogonal frame for $T_{\m_0} \St$ on $\R^2 \setminus \{0\}$. Tangent fields $\bs\phi \in C^\infty_c(\R^2 \setminus \{0\};T_{\m_0} \St)$ may be written as 
\[
\bs\phi = u_1\bs X + u_2 \bs Y
\] 
with uniquely determined coefficient functions
$u=(u_1, u_2)  \in  C_c^\infty(\R^2 \setminus \{0\};\R^2)$. Using the relations
\begin{align*}
\bs X \cdot \del_r \bs Y = \del_r \bs X \cdot \bs Y=0 \quad \text{and}\quad \del_{\psi} \bs X \cdot \bs Y = - \bs X \cdot \del_{\psi} \bs Y = \cos \theta,
\end{align*}
by a straightforward calculation we obtain
\[
|\nabla \bs \phi|^2 = |\nabla u|^2 + 2\frac{\cos \theta}{r^2}(u \times \del_{\psi} u) + \frac{1}{r^2} u_1^2 + \left( (\theta')^2 + \frac{\cos^2 \theta}{r^2} \right) u_2^2,
\]
\[
\bs \phi \cdot (\nabla \times \bs \phi) = -\frac{1}{r}\sin \theta (u \times \del_{\psi} u) + \left( \theta' - \frac{1}{r}\sin \theta \cos \theta \right) u_2^2
\]
and
\[
\Lambda(\m_0)
=(\theta')^2 + \frac{1}{r^2} \sin^2 \theta + 2\theta' + \frac{2}{r} \sin \theta \cos \theta +h(1- \cos \theta).
\]
Thus in these coordinate the Hessian reads
\[
\mathcal{H}(\bs\phi) = \int_0^{2\pi} \int_0^{\infty} \left\{ |\nabla u|^2 
 + 2\left( \frac{\cos \theta}{r^2} - \frac{\sin \theta}{r} \right) (u \times \partial_{\psi}u) 
 + f(r,\theta) u_1^2 + g(r,\theta) u_2^2 \right\} r \dif r \dif\psi 
\]
where
\[
f(r,\theta) = \frac{1}{r^2} \cos^2 \theta -  (\theta')^2 - 2\theta' - \frac{2}{r} \sin \theta \cos \theta  + h\cos \theta,
\]
and
\[
g(r,\theta) = \frac{1}{r^2}(\cos^2 \theta - \sin^2 \theta) - \frac{4}{r} \sin \theta \cos \theta + h \cos \theta.
\]

\subsection{Fourier expansion} We expand $u_i$ in a Fourier series
\[
u_i(r,\psi) = \alpha_i^{(0)}(r) + \sum_{k =1}^{\infty} (\alpha_i^{(k)}(r) \cos(k\psi) + \beta_i^{(k)}(r) \sin(k \psi) ),
\]
where $\alpha_i^{(k)}, \beta_i^{(k)}  \in C_c^\infty(0, \infty)$. Due to the $L^2$-orthogonality 
\begin{align*}
& \int_0^{2\pi} u_i^2 \dif \psi = \pi \left( 2(\alpha_i^{(0)})^2+ \sum_{k =1}^{\infty}\left((\alpha_i^{(k)})^2 + (\beta_i^{(k)})^2 \right)\right), \\
& \int_0^{2\pi} |\del_r u_i|^2 \dif \psi = \pi \left( 2 \left((\alpha_i^{(0)})'\right)^2 +\sum_{k =1}^{\infty}\left(\left((\alpha_i^{(k)})'\right)^2 + \left((\beta_i^{(k)})'\right)^2 \right)\right), \\
& \int_0^{2\pi} |\del_{\psi} u_i|^2 \dif \psi = \pi \sum_{k =1}^{\infty}k^2\left((\alpha_i^{(k)})^2 + (\beta_i^{(k)})^2 \right), \\
\text{and} \quad &
\int_0^{2\pi} (u \times \del_{\psi} u) \dif \psi = \pi \sum_{k =1}^{\infty} 2k\left(\alpha_1^{(k)} \beta_2^{(k)} - \alpha_2^{(k)} \beta_1^{(k)} \right).
\end{align*}
Moreover we obtain
\begin{equation}\label{eq:H1}
 \mathcal{H}(\bs\phi)  =
 2\pi \, \mathcal H_0(\alpha^{(0)}_1, \alpha^{(0)}_2)+ \pi \sum_{k=1}^{\infty} \left( \mathcal H_k(\alpha_1^{(k)}, \beta_2^{(k)}) + \mathcal H_k(\beta_1^{(k)}, -\alpha_2^{(k)}) \right)
\end{equation}
where
\begin{align}\label{eq:H_k}
\mathcal  H_k(\alpha, \beta) = \int_0^{\infty} \bigg\{ & 
 |\alpha'|^2  + |\beta'|^2 
 +\left( \frac{k^2}{r^2} + f(r,\theta) \right) \alpha^2 
 +\left( \frac{k^2}{r^2} + g(r,\theta) \right) \beta^2  \\ \nonumber
 &  + 4k \left( \frac{\cos \theta}{r^2} - \frac{\sin \theta}{r} \right) \alpha \beta 
\bigg\} r \dif r.
\end{align}

The following Lemma provides the reduction to the first two modes.
\begin{lemma}\label{la:reduction} For $k \ge 1$ we have
$
\mathcal H_{k+1}(\alpha,\beta) \geq \mathcal H_k(\alpha,\beta)
$
for all $\alpha,\beta \in C^\infty_c(0,\infty)$. More precisely, the inequality holds pointwisely for the
corresponding integrands.
\end{lemma}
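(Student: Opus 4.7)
\medskip

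\textbf{Proof proposal.} The plan is to compute $\mathcal{H}_{k+1} - \mathcal{H}_k$ pointwise (i.e., at the level of integrands) and show the resulting quadratic form in $(\alpha,\beta)$ is non-negative, using the key estimate \eqref{eq:theta_cos} from Proposition \ref{thm:theta}.

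From the explicit formula \eqref{eq:H_k}, the only $k$-dependence sits in the coefficient $k^2/r^2$ of $\alpha^2+\beta^2$ and in the cross term $4k(\cos\theta/r^2 - \sin\theta/r)\alpha\beta$. Subtracting, the integrand of $\mathcal{H}_{k+1}-\mathcal{H}_k$ is
\[
\frac{(k+1)^2 - k^2}{r^2}(\alpha^2+\beta^2) + 4\left(\frac{\cos\theta}{r^2} - \frac{\sin\theta}{r}\right)\alpha\beta
= \frac{2k+1}{r^2}(\alpha^2+\beta^2) + \frac{4(\cos\theta - r\sin\theta)}{r^2}\alpha\beta.
\]
Thus the claim reduces to showing that the symmetric matrix
\[
A_k(r) = \frac{1}{r^2}\begin{pmatrix} 2k+1 & 2(\cos\theta - r\sin\theta) \\ 2(\cos\theta - r\sin\theta) & 2k+1 \end{pmatrix}
\]
is positive semidefinite for every $r>0$ and every $k\ge 1$.

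Since the diagonal entries are positive, positive semidefiniteness is equivalent to non-negativity of the determinant, i.e.
\[
(2k+1)^2 \ge 4(\cos\theta - r\sin\theta)^2,
\]
or $2k+1 \ge 2|\cos\theta - r\sin\theta|$. By \eqref{eq:theta_cos}, $|\cos\theta - r\sin\theta| < 3/2$, hence $2|\cos\theta - r\sin\theta| < 3 \le 2k+1$ for every $k\ge 1$. This gives strict positive definiteness of $A_k(r)$, and in particular the pointwise inequality for the integrands, which integrated over $r$ yields the lemma.

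The only non-trivial input is the profile estimate \eqref{eq:theta_cos}, proved in the appendix; everything else is an elementary linear-algebraic comparison. I do not foresee any obstacle beyond correctly identifying the relevant matrix and invoking \eqref{eq:theta_cos}. Note that the bound $2k+1 \ge 3$ just barely fails at $k=0$, which is consistent with the paper's plan to handle the zeroth and first Fourier modes separately.
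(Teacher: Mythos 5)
Your proposal is correct and follows essentially the same route as the paper: compute the difference of the integrands, reduce to the quadratic form $\frac{2k+1}{r^2}(\alpha^2+\beta^2)+\frac{4(\cos\theta-r\sin\theta)}{r^2}\alpha\beta$, and conclude non-negativity for $k\ge 1$ from the profile bound \eqref{eq:theta_cos}. The paper phrases the final step as $(2k+1)(\alpha^2+\beta^2)-6|\alpha\beta|\ge 3(|\alpha|-|\beta|)^2\ge 0$ rather than via the determinant criterion for your matrix $A_k(r)$, but these are the same elementary estimate in different clothing.
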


\begin{proof}
For any $\alpha,\beta \in C^\infty_c(0,\infty)$ we have
\begin{align*}
 \mathcal H_{k+1}(\alpha,\beta) - \mathcal  H_k(\alpha,\beta)
= \int_0^{\infty} \left\{ \frac{2k+1}{r^2} (\alpha^2 + \beta^2)+\frac{4}{r^2}(\cos \theta - r\sin \theta) \alpha \beta  \right\} r \dif r 
\end{align*}
For $k \geq 1$, the pointwise estimate (\ref{eq:theta_cos}) yields the following pointwise estimate
\begin{align*}
& \frac{1}{r^2} \left((2k+1) (\alpha^2 + \beta^2)+4(\cos \theta - r\sin \theta)\alpha \beta\right) \\
\geq  & \frac{1}{r^2} \left( (2k+1) (\alpha^2 + \beta^2)-6|\alpha \beta| \right)
\geq \frac{3}{r^2}(|\alpha|-|\beta|)^2 \geq 0
\end{align*}
proving the claim.
\end{proof}

\subsection{Non-negativity of the reduced functional}

It remains to consider the first two modes for $k=0,1$. To simplify this task we need the following decomposition, 
which is proven and called as ``Hardy-type decomposition" in \cite{ignat2015stability}. We modify the conditions to suit our needs.
\begin{lemma}\label{la:Hardy}
Let $A:(0,\infty) \to \R$ be a nonnegative $C^1$ function and $V \in L_{\rm loc}^1((0,\infty),\R)$. Define the operator
\[
L := -\frac{\dif}{\dif r} (A(r) \frac{\dif}{\dif r} ) + V(r)
\]
and consider a smooth function $\psi: (0,\infty) \to \R$ satisfying $\psi>0$ in $(0,\infty)$. For every $f \in C_c^{\infty}(0,\infty)$, writing
\[
g:=\frac{f}{\psi} \in C_c^{\infty}(0,\infty),
\]
following decomposition holds true:
\begin{equation}\label{eq:Hardy}
\int_0^{\infty} Lf \cdot f \dif r = \int_0^{\infty} \psi^2 A(r)(g')^2 \, \dif r + \int_0^{\infty} g^2 L\psi \cdot \psi \, \dif r.
\end{equation}
\end{lemma}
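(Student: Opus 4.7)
My plan is to verify the identity by the substitution $f = g\psi$, which turns both sides of \eqref{eq:Hardy} into polynomials in $g, g', \psi, \psi'$ that can be matched term by term after two integrations by parts. Throughout, the crucial structural fact is that $g \in C_c^\infty(0,\infty)$, so every integration by parts performed below produces no boundary contribution at $r=0$ or $r=\infty$, regardless of the behaviour of $\psi$ at the endpoints.

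First I would integrate the left-hand side by parts against the compactly supported factor $f$ to obtain the Dirichlet form
\[
\int_0^\infty Lf \cdot f \dif r = \int_0^\infty A (f')^2 \dif r + \int_0^\infty V f^2 \dif r,
\]
and then insert $f' = g'\psi + g\psi'$ and $f^2 = g^2\psi^2$. Expansion of the square yields
\[
\int_0^\infty Lf \cdot f \dif r = \int_0^\infty A\psi^2 (g')^2 \dif r + 2\int_0^\infty A\psi\psi' gg' \dif r + \int_0^\infty \bigl( A(\psi')^2 + V\psi^2 \bigr) g^2 \dif r.
\]
In parallel, I would expand $\int_0^\infty g^2 L\psi \cdot \psi \dif r$ by writing $L\psi \cdot \psi g^2 = -(A\psi')'\psi g^2 + V\psi^2 g^2$ and integrating the first summand against the compactly supported factor $\psi g^2$. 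Using $(\psi g^2)' = \psi' g^2 + 2\psi g g'$, this produces
\[
\int_0^\infty g^2 L\psi \cdot \psi \dif r = 2\int_0^\infty A\psi\psi' gg' \dif r + \int_0^\infty \bigl( A(\psi')^2 + V\psi^2 \bigr) g^2 \dif r.
\]

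Subtracting this from the previous identity leaves precisely $\int_0^\infty A\psi^2 (g')^2 \dif r$ on the left, which is exactly the decomposition claimed in \eqref{eq:Hardy}. The one point that deserves mild attention is the justification of the integrations by parts in a setting where $A$ is only $C^1$ and $V$ only $L^1_\loc$; this is not a real obstacle, since the compact support of $g$ inside $(0,\infty)$ localises every integral to a compact subinterval on which $A\psi\psi'$ is continuous and $V\psi^2 g^2$ integrable, so the standard integration-by-parts formula for absolutely continuous functions applies and no endpoint terms arise.
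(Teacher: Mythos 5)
Your proof is correct. A point of comparison: the paper itself gives no proof of this lemma --- it is imported from \cite{ignat2015stability} with modified hypotheses --- so your argument is a self-contained reconstruction of what the paper outsources to that reference. The route you take (substitute $f=g\psi$, expand the Dirichlet form $\int A(f')^2+Vf^2\dif r$, expand $\int g^2\, L\psi\cdot\psi\dif r$ by integrating $-(A\psi')'$ against the compactly supported factor $\psi g^2$, and subtract) is precisely the standard ground-state substitution behind such Hardy-type decompositions, so in substance it coincides with the cited proof rather than offering a different one. You also correctly address the only two points where the weakened hypotheses could cause trouble: all boundary terms vanish because $g\in C_c^{\infty}(0,\infty)$ localizes every integral to a compact subinterval of $(0,\infty)$ (so no decay or regularity of $\psi$, $A$, $V$ at the endpoints is ever used), and $A\in C^1$ together with $\psi$ smooth makes $(Af')'$ and $(A\psi')'$ continuous while $V\in L^1_{\rm loc}$ keeps the potential terms integrable on that subinterval.
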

It is customary to represent admissible $\alpha, \beta$ as
\begin{equation} \label{eq:xi_eta}
\alpha=\frac{1}{r}\sin \theta\xi \quad \text{and} \quad \beta=-\theta' \eta,
\end{equation}
for some uniquely determined $\xi,\eta \in C_c^{\infty}(0,\infty)$.

\medskip

With $\alpha=\frac{1}{r}\sin \theta\xi$ and $\beta=-\theta' \eta$, where $\xi,\eta \in C_c^{\infty}(0,\infty)$, it follows from Lemma \ref{la:Hardy} with $A=r$, $V=\frac{k^2}{r}+f(r,\theta)r$, $\psi=\frac{1}{r}\sin \theta$ that
\begin{align*}
& \int_0^{\infty} \left\{ r|\alpha'|^2  + \left( \frac{k^2}{r} + rf(r,\theta) \right) \alpha^2  \right\} \dif r \\
= & \int_0^{\infty} \left\{ \frac{\sin \theta^2}{r}    (\xi')^2   +  \frac{k^2-1}{r}  \left(\frac{\sin \theta}{r}  \right)^2 \, \xi^2
+ 2\frac{\sin \theta}{r}  \, \theta' \left( \frac{\cos \theta}{r} - \sin \theta \right) \xi^2\right\}  \dif r
\end{align*}
and respectively with $A=r$, $V=\frac{k^2}{r}+g(r,\theta)r$ and $\psi=-\theta'$ that
\begin{align*}
& \int_0^{\infty} \left\{ r|\beta'|^2 +\left( \frac{k^2}{r} + rg(r,\theta) \right) \beta^2 \right\}  \dif r\\
=& \int_0^{\infty} \left\{ r(\theta')^2  (\eta')^2
+ \frac{k^2-1}{r} (\theta')^2 \eta^2 
+ 2\frac{\sin \theta}{r}  \, \theta' \left( \frac{\cos \theta}{r} - \sin \theta  \right)  \eta^2 \right\} \dif r.
\end{align*}
Altogether we have
\begin{align*}
& \mathcal H_k (\alpha,\beta) = \tilde{\mathcal H}_k(\xi,\eta) \\
 =& \int_0^{\infty} \Bigg\{ \frac{\sin^2 \theta }{r} (\xi')^2 
+ r(\theta')^2 (\eta')^2 
 + \frac{k^2-1}{r} \left(\frac{\sin \theta}{r}\right)^2  \, \xi^2 + \frac{k^2-1}{r} (\theta')^2 \eta^2 \\
&\qquad\qquad + 2\frac{\sin \theta}{r}  \, \theta' \left( \frac{\cos \theta}{r} - \sin \theta \right) (\xi^2 - 2k\xi \eta + \eta^2)
\Bigg\} \dif r.
\end{align*}

\begin{lemma}\label{la:PositiveH0}
$\tilde{\mathcal H}_0(\xi,\eta) > 0$ for all $ \xi,\eta \in C^{\infty}_c(0,\infty) \setminus \{0\}$. 
%Moreover, if $\tilde{\mathcal H}_0(\xi_n,\eta_n) \to 0$ for some sequences $\xi_n,\eta_n \in C^{\infty}_c(0,\infty)$,
%then $\xi_n,\eta_n \to 0$ pointwise as $n \to \infty$.
\end{lemma}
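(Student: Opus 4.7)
The plan is to exploit the key simplification at $k=0$: the cross term $-2k\xi\eta$ in the integrand of $\tilde{\mathcal{H}}_k$ vanishes, so $\tilde{\mathcal{H}}_0(\xi,\eta)$ decouples fully as $\mathcal{I}^\xi(\xi) + \mathcal{I}^\eta(\eta)$, where each summand is a one-variable Sturm--Liouville form with diffusion coefficients $A_\xi = \sin^2\theta/r$ and $A_\eta = r(\theta')^2$ respectively. I shall treat the two pieces separately by applying Lemma~\ref{la:Hardy} with the multiplier $\psi(r) = r$ in both cases. The motivation is that, in the $(\xi,\eta)$-parametrization, the linear functions $\xi \equiv r$ and $\eta \equiv r$ encode the phase rotation $\partial_\gamma \m_0^\gamma\big|_{\gamma = \pi/2}$ and the dilation $\frac{d}{d\lambda}\big|_{\lambda=1} \m_0(\cdot/\lambda)$: neither is a symmetry of the full energy, but both preserve the Dirichlet part, so one expects the residual potentials $L_\xi, L_\eta$ to be strictly positive remainders coming from the DMI and Zeeman terms.

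Writing $\xi = r g$ and $\eta = r \tilde{g}$ with $g, \tilde{g} \in C_c^\infty(0,\infty)$, Lemma~\ref{la:Hardy} yields
\[
\mathcal{I}^\xi(\xi) = \int_0^\infty r \sin^2\theta\,(g')^2 \, dr + \int_0^\infty r L_\xi(r)\, g^2 \, dr
\]
and an analogous identity for $\mathcal{I}^\eta(\eta)$ with kinetic weight $r^3(\theta')^2$ and potential $r L_\eta(r)$. A direct differentiation will produce $L_\xi(r) = -2 \sin^2\theta \, \theta'$, which is strictly positive on $(0,\infty)$ by monotonicity of $\theta$ (Proposition~\ref{thm:theta}). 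The computation of $L_\eta(r)$ is more delicate: differentiating $r(\theta')^2$ introduces a $\theta''$ term, which I shall eliminate using the Euler--Lagrange equation~\eqref{eq:thetaODE}. After systematic cancellation the residue collapses to
\[
L_\eta(r) = 2\sin\theta \, (-\theta') \, (hr - \sin\theta),
\]
and positivity then follows from the estimate~\eqref{eq:theta_h}, which for $h \gg 1$ gives $hr \ge \tfrac{3}{2} \sin\theta$ and hence $hr - \sin\theta \ge \tfrac{1}{2} \sin\theta \ge 0$ on $(0,\infty)$.

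All four integrals in the Hardy decompositions are therefore non-negative, so $\tilde{\mathcal{H}}_0(\xi,\eta) \ge 0$. For strict positivity, if $\tilde{\mathcal{H}}_0(\xi,\eta) = 0$ then both kinetic integrals vanish; since $\sin\theta > 0$ on $(0,\infty)$ and $\theta'$ cannot vanish on any open subinterval (strict monotonicity of $\theta$), this forces $g' = \tilde{g}' = 0$ a.e., so $g$ and $\tilde{g}$ must be constants. Compact support of $g = \xi/r$ and $\tilde{g} = \eta/r$ in $(0,\infty)$ then implies $g = \tilde{g} \equiv 0$, whence $\xi = \eta = 0$. The hard part will be the bookkeeping in the $L_\eta(r)$ computation: among the $\theta''$-, kinetic-, and Zeeman-type terms produced by the Sturm--Liouville operator acting on $\psi = r$, nearly everything cancels once the ODE is substituted, leaving precisely the two-term expression in which the threshold $hr - \sin\theta$ appears---and which the sharp estimate~\eqref{eq:theta_h} from Proposition~\ref{thm:theta} is tailored to control.
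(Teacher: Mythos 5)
Your proposal is correct and follows essentially the same route as the paper: decoupling at $k=0$, the Hardy-type decomposition of Lemma~\ref{la:Hardy} with multiplier $\psi(r)=r$ applied separately to the $\xi$- and $\eta$-parts, elimination of $\theta''$ via \eqref{eq:thetaODE}, and positivity of the residual potentials from the monotonicity of $\theta$ and estimate \eqref{eq:theta_h}. The only (immaterial) difference is bookkeeping: you fold the potential $V$ into the Hardy operator and obtain the combined residue $2\sin\theta\,(-\theta')(hr-\sin\theta)/r$, whereas the paper applies the decomposition with $V=0$ and keeps the threshold in the form $h-\tfrac{3}{2r}\sin\theta$ plus a leftover $\tfrac{\sin^2\theta}{r}(-\theta')$ term---these agree identically.
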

\begin{proof}
For 
\begin{align*}
 \tilde{\mathcal H}_0(\xi,\eta) 
 =& \int_0^{\infty} \Bigg\{ \frac{\sin^2 \theta }{r}  (\xi')^2 
+ r(\theta')^2 (\eta')^2 
 - \frac{1}{r} \left(\frac{\sin \theta}{r}\right)^2  \, \xi^2 -\frac{1}{r} (\theta')^2 \eta^2 \\
&\qquad\qquad + 2\frac{\sin \theta}{r}  \, \theta' \left( \frac{\cos \theta}{r} - \sin \theta \right) (\xi^2 + \eta^2)
\Bigg\} \dif r
\end{align*}
we apply the decomposition (\ref{eq:Hardy}) for $A=\frac{\sin^2 \theta}{r}$, $V=0$ and $\psi=r$ and get
\begin{align*}
 & \int_0^{\infty} \frac{\sin^2 \theta}{r} (\xi')^2 \dif r 
= \int_0^{\infty} -\left(\frac{\sin^2 \theta}{r} \xi' \right)' \xi \dif r \\
= & \int_0^{\infty} r^2 \frac{\sin^2 \theta}{r} \left( \left(\frac{\xi}{r} \right)' \right)^2 + \left(\frac{\xi}{r}\right)^2 \left(\frac{\sin^2 \theta}{r} - 2\sin \theta \cos \theta \theta' \right) \dif r,
\end{align*}
hence
\begin{equation}\label{eq:H0_xi}
\int_0^{\infty} \left\{ \frac{\sin^2 \theta}{r} \left( (\xi')^2 - \frac{1}{r^2} \xi^2 \right) 
+ \frac{2}{r^2} \sin \theta \cos \theta \, \theta' \xi^2 \right\} \dif r \geq 0.
\end{equation}
Analogously it follows from (\ref{eq:Hardy}) with $A=r(\theta')^2$, $V=0$ and $\psi=r$ and (\ref{eq:thetaODE}) for $\theta$ that
\begin{align*}
 &\int_0^{\infty} r (\theta')^2 (\eta')^2 \dif r 
=  \int_0^{\infty}  - \left(r \left(\theta'\right)^2 \eta' \right)' \eta  \dif r \\
=& \int_0^{\infty} r^2 r (\theta')^2 \left( (\frac{\eta}{r})' \right)^2 \dif r
+ \int_0^{\infty} \left( \frac{\eta}{r} \right)^2 \left(-r(\theta')^2 - 2r^2 \theta' \theta'' \right) \dif r\\
= &\int_0^{\infty} r^3 (\theta')^2 \left( (\frac{\eta}{r})' \right)^2
+ \left( \frac{\eta}{r} \right)^2 \left( r(\theta')^2 
- 2\sin \theta \cos \theta \, \theta' + 4r \sin^2 \theta \, \theta' -2hr^2 \sin \theta \, \theta' \right) \dif r
\end{align*}
and thus by the estimate (\ref{eq:theta_h}) for $\theta$ 
\begin{equation}\label{eq:H0_eta}
\begin{aligned}
& \int_0^{\infty} \left\{ r (\theta')^2 (\eta')^2 - \frac{1}{r} (\theta')^2 \eta^2 + \frac{2}{r^2} \sin \theta \cos \theta \, \theta' \eta^2 + \frac{\sin^2 \theta}{r}  (- \theta')  \eta^2 \right\} \dif r  \\
= & \int_0^{\infty}  r^3 (\theta')^2 \left(\left(\frac{\eta}{r}\right)'\right)^2 +2 (-\theta') \sin  \theta \left( h-\frac{3}{2r} \sin \theta \right) \eta^2 \dif r \geq 0.
\end{aligned}
\end{equation}
Adding \eqref{eq:H0_xi} and \eqref{eq:H0_eta} we get
\[
\tilde{\mathcal H}_0(\xi,\eta) \geq \int_0^{\infty}  2\frac{\sin^2 \theta}{r}  (- \theta') (\xi^2  + \frac{1}{2}\eta^2) \dif r \geq 0,
\]
and the claim follows.
\end{proof}

\begin{lemma}\label{la:PositiveH1}
$\tilde{\mathcal H}_1(\xi,\eta) > 0$ for all $\xi,\eta \in C^{\infty}_c(0,\infty) \setminus \{0\}$.
% Moreover, if $\tilde{\mathcal H}_1(\xi_n,\eta_n) \to 0$ for some sequences $\xi_n,\eta_n \in C^{\infty}_c(0,\infty)$,
%then $\xi_n,\eta_n \to 0$ pointwise as $n \to \infty$.
\end{lemma}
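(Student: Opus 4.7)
The plan is to mirror the strategy of Lemma~\ref{la:PositiveH0} but replace the Hardy decomposition with an integration-by-parts/completion-of-squares identity that exposes the specific algebraic structure of the cross term $C(r)(\xi-\eta)^2$ at $k=1$. First we reduce to a scalar problem in $u:=\xi-\eta$ and then rewrite the sign-indefinite potential as a sum of two manifestly non-negative integrals.

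\textbf{Step 1 (scalar reduction).} We use \eqref{eq:theta_sin} from Proposition~\ref{thm:theta}, i.e.\ $r(\theta')^2\ge\frac{\sin^2\theta}{r}$, together with the elementary inequality $(\xi')^2+(\eta')^2\ge\tfrac{1}{2}(\xi'-\eta')^2$, to obtain
\[
\tilde{\mathcal H}_1(\xi,\eta) \ge \int_0^\infty \Bigl\{\tfrac{\sin^2\theta}{2r}(u')^2 + C(r)\,u^2\Bigr\}\dif r, \qquad C(r)=\tfrac{2\sin\theta\cos\theta\,\theta'}{r^2}-\tfrac{2\sin^2\theta\,\theta'}{r}.
\]

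\textbf{Step 2 (integration by parts and perfect square).} The identity $(\sin^2\theta)'=2\sin\theta\cos\theta\,\theta'$ allows us to integrate the first part of $\int C u^2\,\dif r$ by parts, using the compact support of $u$ in $(0,\infty)$:
\[
\int_0^\infty \frac{(\sin^2\theta)'}{r^2}u^2\,\dif r = -\int_0^\infty \frac{2\sin^2\theta\,uu'}{r^2}\,\dif r + \int_0^\infty \frac{2\sin^2\theta\,u^2}{r^3}\,\dif r.
\]
Combining with the kinetic term and recognising the resulting three contributions as a perfect square,
\[
\tfrac{\sin^2\theta}{2r}(u')^2 - \tfrac{2\sin^2\theta\,uu'}{r^2} + \tfrac{2\sin^2\theta\,u^2}{r^3}
= \tfrac{\sin^2\theta}{2r}\Bigl(u' - \tfrac{2u}{r}\Bigr)^2,
\]
one arrives at
\[
\tilde{\mathcal H}_1(\xi,\eta) \ge \int_0^\infty \tfrac{\sin^2\theta}{2r}\Bigl(u'-\tfrac{2u}{r}\Bigr)^2\dif r + \int_0^\infty \tfrac{2\sin^2\theta(-\theta')}{r}\,u^2\,\dif r \ge 0,
\]
the second term being non-negative by the monotonicity $\theta'<0$ from Proposition~\ref{thm:theta}.

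\textbf{Step 3 (strict positivity) and the main obstacle.} Equality throughout would force both integrals above to vanish. Since the weight $\frac{\sin^2\theta(-\theta')}{r}$ is strictly positive on $(0,\infty)$, the second one forces $u\equiv 0$, hence $\xi\equiv\eta$. The inequality of Step~1 then collapses to $\int_0^\infty(\tfrac{\sin^2\theta}{r}+r(\theta')^2)(\xi')^2\dif r=0$, so $\xi$ is constant, and compact support in $(0,\infty)$ yields $\xi\equiv\eta\equiv 0$. The main obstacle is identifying the correct algebraic manoeuvre: rather than looking for a Hardy-type test function as in Lemma~\ref{la:PositiveH0}, one must exploit $(\sin^2\theta)'=2\sin\theta\cos\theta\,\theta'$ to integrate the cross term by parts, generating precisely the mixed and zero-order contributions needed to complete the perfect square against $\tfrac{\sin^2\theta}{2r}(u')^2$. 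Notably, the argument uses only the monotonicity of $\theta$ and the bound~\eqref{eq:theta_sin}; neither the largeness of $h$ nor the pointwise estimate~\eqref{eq:theta_cos} is required at this stage.
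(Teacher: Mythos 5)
Your proof is correct, and it takes a genuinely different (and somewhat slicker) route than the paper's. The paper keeps both variables throughout: after integrating the cross term by parts (using the same identity $(\sin^2\theta)'=2\sin\theta\cos\theta\,\theta'$ that you exploit), it completes \emph{two} squares, one pairing $\xi'$ with $\xi-\eta$ and one pairing $\eta'$ with $\xi-\eta$, and is left with a residual term $\bigl(r(\theta')^2-\tfrac{\sin^2\theta}{r(1+2r^2(-\theta'))}\bigr)(\eta')^2$, whose coefficient is shown to be strictly positive from \eqref{eq:theta_sin} and monotonicity. You instead collapse to the scalar $u=\xi-\eta$ first, via \eqref{eq:theta_sin} and $(\xi')^2+(\eta')^2\ge\tfrac12(\xi'-\eta')^2$, and then need only one square; both arguments rest on exactly the same two ingredients of Proposition \ref{thm:theta}, namely \eqref{eq:theta_sin} and strict monotonicity (\eqref{eq:theta_cos} is only needed for Lemma \ref{la:reduction}). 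The trade-off is that the paper's decomposition retains separate control of $\eta'$ and of $\xi-\eta$, and this extra structure is invoked again at the end of Section \ref{sec:stab}: for truncations $\bs\phi_\eps$ with $\mathcal{H}(\bs\phi_\eps)\to 0$, the strictly positive $(\eta')^2$ coefficient and the two squares give that $\xi_\eps$ and $\eta_\eps$ converge in measure to a constant, which is what identifies the kernel in claim (ii) of Proposition \ref{prop:stability}. Your Step 1, as written, discards the $(\xi'+\eta')^2$ half of the kinetic energy, so in that limiting argument it only yields $\xi_\eps-\eta_\eps\to 0$, which would not suffice there. This is repaired at no cost: keep the full parallelogram identity $(\xi')^2+(\eta')^2=\tfrac12(\xi'-\eta')^2+\tfrac12(\xi'+\eta')^2$ in Step 1, so that the additional term $\int_0^\infty\tfrac{\sin^2\theta}{2r}(\xi'+\eta')^2\dif r$ survives on the right-hand side; together with $u_\eps\to 0$ this recovers the paper's conclusion. (One cosmetic point: in Step 3, after $u\equiv 0$ you should appeal to the original expression for $\tilde{\mathcal H}_1(\xi,\xi)$ — in which the cross term vanishes identically — rather than to ``the inequality of Step 1''; the identity you then write down is exactly $\tilde{\mathcal H}_1(\xi,\xi)=0$, and the conclusion $\xi\equiv\eta\equiv 0$ follows as you say.)
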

\begin{proof}
By partial integration we have
\begin{align*}
\tilde{\mathcal H}_1(\xi,\eta) =&\int_0^{\infty} \frac{\sin^2 \theta}{r}(\xi')^2+r(\theta')^2 (\eta')^2 + 2\frac{\sin \theta}{r} \theta' \left(\frac{\cos \theta}{r}-\sin \theta \right) (\xi-\eta)^2  \dif r \\
=& \int_0^{\infty} \frac{\sin^2 \theta}{r}(\xi')^2+r(\theta')^2 (\eta')^2 + \frac{2\sin^2 \theta}{r^3} (\xi-\eta)^2 + \frac{2\sin^2 \theta}{r}(-\theta')(\xi-\eta)^2 \\
& \qquad\quad  -\frac{2 \sin^2 \theta}{r^2}(\xi-\eta)\xi' + \frac{2 \sin^2 \theta}{r^2} (\xi-\eta) \eta' \dif r\\
=& \int_0^{\infty} \left( \frac{\sin \theta}{r^{1/2}} \xi' - \frac{\sin \theta}{r^{3/2}}(\xi-\eta) \right)^2 
+ \left(r(\theta')^2 - \frac{\sin^2 \theta}{r(1+2r^2(-\theta'))} \right)(\eta')^2
\\
& \quad + \left( \frac{\sin \theta}{r^{1/2}}(1+2r^2(-\theta'))^{-\frac{1}{2}} \eta'+  \frac{\sin \theta}{r^{3/2}}(1+2r^2(-\theta'))^{\frac{1}{2}}(\xi-\eta) \right)^2 \dif r.
\end{align*}
For the second term it follows from \eqref{eq:theta_sin} and the monotonicity of $\theta$
\[
r(\theta')^2 - \frac{\sin^2 \theta}{r(1+2r^2(-\theta'))}
= \frac{1}{r(1+2r^2(-\theta'))} \left( r^2(\theta')^2 - \sin^2 \theta  - 2r^3 (\theta')^3 \right)>0
\]
for every $r \in (0,\infty)$. Hence $\tilde{\mathcal H}_1(\xi,\eta) \ge 0$ with equality only if $\xi=\eta \equiv 0$.
\end{proof}

To conclude claim (ii) of Proposition \ref{prop:stability}, we consider for $\bs \phi \in H^{\infty}(\R^2;T_{\m_0}\St)$ with $\bs \phi(0)=0$ and
$\mathcal{H}(\bs \phi)=0$ a family of approximating truncations $\bs \phi_\eps$ such that $\mathcal{H}(\bs \phi_\eps) \to 0$ as $\eps  \to 0$, according to Lemma \ref{lemma:approximation}. Regarding the zeroth mode, the estimate in the proof of Lemma \ref{la:PositiveH0} implies that the corresponding functions $\xi_\eps$ and $\eta_\eps$ converge
to zero in measure. Regarding the first and higher modes, the estimate in the proof of Lemma \ref{la:PositiveH1} implies that the corresponding functions $\xi_\eps$ and $\eta_\eps$ 
converge in measure to a constant. But according to \eqref{eq:xi_eta} and 

\[
\lim_{r \to 0}\frac{\sin \theta(r)}{r}= - \lim_{r \to 0} \theta'(r) =\frac{h}{2}, 
\]
according to Lemma \ref{la:thetaAsymtot}, this constant must be zero. 

\section{Spectral gap and Fredholm property}
\label{sec:fred}

In this section we complete the proof of Theorem \ref{thm:stability}.

\begin{lemma} \label{lemma:wlsc} \mbox{}
\begin{itemize}
\item[(i)] For $h>1$ the form $\mathcal H_{\infty}$ is subcritical in the sense that for some positive $\gamma$ and $\nu$
\begin{equation*}
\mathcal H_{\infty}(\bs \phi)-\gamma \| \bs \phi\|_{L^2}^2 \ge \nu \| \bs \phi\|_{H^1}^2.
\end{equation*}
\item[(ii)] 
The form $\Delta \mathcal{H}$ is compact in the sense that for $\bs \phi_k \rightharpoonup \bs \phi$  weakly in $H^1(\R^2; \R^3)$
\begin{equation*}
\lim_{k \to \infty} \Delta \mathcal{H}(\bs \phi_k) =\Delta \mathcal{H}(\bs \phi). 
\end{equation*}
\item[(iii)] The functional
$
\mathcal{H}(\bs \phi) - \gamma \|\bs \phi\|_{L^2}^2 
$
is weakly lower semincontinuous in $H^1(\R^2; \R^3)$.
\end{itemize}
\end{lemma}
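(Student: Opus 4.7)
The plan for part (i) is to diagonalise $\mathcal H_\infty$ in Fourier space. Extending the target to $\R^3$ and writing $\tilde\xi = (\xi_1,\xi_2,0)$ so that $\widehat{\nabla\times\bs\phi}(\xi) = i\tilde\xi\times\hat{\bs\phi}(\xi)$, Plancherel's theorem gives
\[
\mathcal H_\infty(\bs\phi) = \int_{\R^2}\bigl[(|\xi|^2+h)|\hat{\bs\phi}|^2 + 2\,\overline{\hat{\bs\phi}}\cdot(i\tilde\xi\times\hat{\bs\phi})\bigr]\,\dif\xi.
\]
A short computation in the orthogonal basis $(\hat\xi,\hat\xi^\perp,\ein_3)$ shows that the associated Hermitian symbol has minimum eigenvalue $(|\xi|-1)^2 + (h-1)$. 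For any $h>1$ and any $\gamma\in(0,h-1)$ I would then pick $\nu>0$ so small that the quadratic $(1-\nu)|\xi|^2-2|\xi|+(h-\gamma-\nu)$ is non-negative for every $\xi$ (its discriminant $4-4(1-\nu)(h-\gamma-\nu)$ becomes non-positive for $\nu$ small), which delivers the coercivity in (i).

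For part (ii), Proposition~\ref{prop:H^2} gives $\m_0\in H^\infty_{\ein_3}(\R^2;\St)$, so in particular $\nabla\m_0\in L^2\cap L^\infty$, and hence $\Lambda(\m_0)$ in \eqref{eq:fm0} belongs to $L^2\cap L^\infty(\R^2)\subset L^s(\R^2)$ for every $s\in[2,\infty]$. Given $\bs\phi_k\rightharpoonup\bs\phi$ in $H^1$, Rellich--Kondrachov yields $\bs\phi_k\to\bs\phi$ in $L^p(B_R)$ for every $p<\infty$ and every $R>0$, so the interior contribution $\int_{B_R}\Lambda(\m_0)(|\bs\phi_k|^2-|\bs\phi|^2)\,\dif x$ tends to zero as $k\to\infty$ by H\"older. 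The tail I plan to control via H\"older with $s=2$ combined with the 2D Sobolev embedding $H^1\hookrightarrow L^4$, producing a bound $C\|\Lambda(\m_0)\|_{L^2(\{|x|>R\})}$ uniform in $k$ and vanishing as $R\to\infty$; a standard $\liminf$ argument then delivers (ii).

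For part (iii), I decompose $\mathcal{H}(\bs\phi)-\gamma\|\bs\phi\|_{L^2}^2=\bigl(\mathcal H_\infty(\bs\phi)-\gamma\|\bs\phi\|_{L^2}^2\bigr)-\Delta\mathcal{H}(\bs\phi)$. By (i) the first summand is a continuous non-negative quadratic form on $H^1(\R^2;\R^3)$, hence weakly lower semi-continuous, and by (ii) the second summand is weakly continuous, so the sum is weakly lower semi-continuous. The main obstacle is the sharp spectral analysis in (i): a naive Young-type bound on the helicity cross term $2\bs\phi\cdot(\nabla\times\bs\phi)$ only yields subcriticality above a threshold strictly larger than one, so it is essential to exploit the precise symbol structure to capture the critical value $h=1$; parts (ii) and (iii) are then comparatively routine consequences.
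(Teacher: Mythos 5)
Your proposal is correct, and parts (ii) and (iii) coincide with the paper's own argument: the paper likewise splits $\Delta\mathcal{H}$ into an interior piece handled by Rellich--Kondrachov and a tail bounded by $\|\bs\phi_k\|_{H^1}^2\,\|\Lambda(\m_0)\|_{L^2(B_R^c)}$ via $H^1\hookrightarrow L^4$, and then gets (iii) exactly as you do, as the sum of a coercive (hence weakly lower semicontinuous) quadratic form and a weakly continuous one. Part (i) is where you genuinely diverge. The paper never passes to Fourier variables: it applies Young's inequality to the cross term together with the integral relation $\|\nabla\times\bs\phi\|_{L^2}\le\|\nabla\bs\phi\|_{L^2}$ (stated there as an equality; in fact the difference of the squares is $\|\del_1\phi_1+\del_2\phi_2\|_{L^2}^2$, and the inequality is the direction actually needed), which yields
\[
\mathcal H_{\infty}(\bs\phi)\;\ge\;\tfrac{h-1}{h}\max\{\|\nabla\bs\phi\|_{L^2}^2,\,h\|\bs\phi\|_{L^2}^2\}\;\ge\;\tfrac{h-1}{h+1}\|\bs\phi\|_{H^1}^2,
\]
and subcriticality follows by replacing $h$ with $h-\gamma$. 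Your symbol computation is correct (since $i\tilde\xi\times{}$ has spectrum $\{0,\pm|\xi|\}$, the minimal eigenvalue is indeed $(|\xi|-1)^2+(h-1)$) and reaches the same sharp threshold $h>1$; your route makes the sharpness transparent, while the paper's stays at the level of the quadratic form. One claim in your closing paragraph is off, however: it is only the \emph{pointwise} Young bound (via $|\nabla\times\bs\phi|^2\le 2|\nabla\bs\phi|^2$) that degrades the threshold; Young's inequality at the $L^2$ level, combined with the curl--gradient relation above, is not lossy and is precisely how the paper captures every $h>1$. A small point in your favour: the paper's explicit constants $\gamma=(h+1)/2$, $\nu=(h'-1)/(h'+1)$ with $h'=(h-1)/2$ implicitly require $h>3$, whereas your choice $\gamma\in(0,h-1)$ with $\nu$ small covers the full range $h>1$ asserted in the statement.
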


\begin{proof}
Applying Young's inequality to $\bs \phi \cdot (\nabla \times \bs \phi)$ and using the orthogonality relation $\|\nabla \times \bs \phi\|_{L^2} =
\|\nabla  \bs \phi\|_{L^2}$ we obtain 
\begin{equation}\label{eq:prel_lower_infty}
\mathcal{H}_\infty(\bs \phi) \ge  \frac{h-1}{h}  \max\{ \|\nabla \bs \phi\|_{L^2}^2, h \|\bs \phi\|_{L^2}^2 \} \ge  \frac{h-1}{h+1} \|\bs \phi\|^2_{H^1} 
\end{equation}
Claim (i) follows with $\gamma=(h+1)/2$ and $\nu=(h'-1)/(h'+1)$ where $h'=(h-1)/2$.

\medskip

%follows from the orthogonality estimate $\| \nabla \times \bs \phi\|_{L^2} \le \| \nabla \bs \phi\|_{L^2}$ and Young's inequality
%applied to the helicity term
%\[
%2 \left|  \int_{\R^2} \bs \phi \cdot (\nabla \times \bs \phi) \dif x \right| \le \eps \|\nabla \bs \phi\|_{L^2}^2 + \frac{1}{\eps} \|\bs \phi\|_{L^2}^2
%\]
%which implies
%\[
%H_{\infty}(\bs \phi) \ge (1-\eps) \| \nabla \bs \phi\|_{L^2}^2 + \left( \frac{h+1}{2} -\frac 1 \eps \right) \|\bs \phi\|_{L^2}^2 + \gamma \|\bs \phi\|_{L^2}^2
%\]
%for all $\bs \phi \in H^1(\R^2; \R^3)$ and $\eps \in (0,1)$, and with $\gamma = (h-1)/2$. 
By virtue of the Rellich--Kondrachov theorem $\int_{B_R} \Lambda(\m_0) |\bs \phi_k|^2 \dif x$ is compact for every finite $R$.
Since $\Lambda(\m_0) \in L^2(\R^2)$ by Proposition  \ref{prop:H^2}  and $\|\bs \psi\|_{L^4} \lesssim \|\bs \psi\|_{H^1}$, we have
\[
 \left| \mathcal H_{\infty}(\bs \psi, \bs \psi) - \int_{B_R} \Lambda(\m_0) |\bs \psi|^2 \dif x \right| \lesssim \|\bs \psi\|_{H^1}^2 \|\Lambda(\m_0)\|_{L^2(B_R^c)} \to 0
\]
as $R \to \infty$. Since by assumption $\sup_{k \in \N} \|\bs \phi_k\|_{H^1}<\infty$, claim (ii) and (iii) follow. 
\end{proof}

We also need the following regularity result for the weak equation $\mathcal{J}\bs \phi =\bs f$.

\begin{lemma}\label{lemma:H^2}
Suppose $\bs f \in L^2(\R^2;T_{\m_0}\St)$. If $\bs \phi \in H^1(\R^2;T_{\m_0}\St)$ satisfies
\[
\mathcal{H}(\bs \phi, \bs \psi)= \langle \bs f, \bs \psi \rangle_{L^2} \quad \text{for all} \quad \bs \psi \in H^1(\R^2; T_{\m_0}\St),
\]
then $\bs \phi \in H^2(\R^2;\R^3)$.
\end{lemma}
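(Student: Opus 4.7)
\medskip
\noindent\textbf{Proof proposal.} The plan is to recognize the equation $\mathcal{H}(\bs \phi,\bs\psi) = \langle \bs f,\bs\psi\rangle_{L^2}$ for all tangent $\bs\psi$ as a constrained elliptic problem with a Lagrange multiplier, to extract the multiplier explicitly using the pointwise constraint $\bs\phi \cdot \m_0 = 0$, and then to conclude $H^2$ regularity from standard interior elliptic regularity for $-\Delta$ on $\R^2$. Throughout I will use that $\m_0 \in H^\infty_{\ein_3}(\R^2;\St)$ by Proposition~\ref{prop:H^2}, so that $\m_0$, $\nabla \m_0$, $\Delta \m_0$ and $\Lambda(\m_0)$ are all smooth and bounded, with bounded derivatives of all orders.

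\medskip
\noindent\textit{Step 1: Lagrange multiplier form of the equation.} Testing the weak equation against arbitrary $\bs\psi \in C_c^\infty(\R^2;T_{\m_0}\St)$ and comparing with the first integration-by-parts formula for $\mathcal{H}$ shows that the distribution
\[
\mathcal{L}\bs\phi - \bs f \;=\; -\Delta \bs\phi + 2\,\nabla\times\bs\phi + (h-\Lambda(\m_0))\bs\phi - \bs f
\]
vanishes on all tangential test fields. Since an arbitrary $\bs\eta \in C_c^\infty(\R^2;\R^3)$ decomposes as $\bs\eta = P_{\m_0}\bs\eta + (\bs\eta\cdot\m_0)\m_0$ with $P_{\m_0}\bs\eta$ still tangent and smooth (Lemma~\ref{lemma:density}), this means that $\mathcal{L}\bs\phi - \bs f$ is parallel to $\m_0$ distributionally, i.e.\ $\mathcal{L}\bs\phi = \bs f + \mu\,\m_0$ for some distribution $\mu$.

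\medskip
\noindent\textit{Step 2: Computing the multiplier via the constraint.} To control $\mu$, take the scalar product of the previous relation with $\m_0$: since $\bs f \perp \m_0$ and $(h-\Lambda(\m_0))\bs\phi\cdot\m_0 = 0$, one gets formally
\[
\mu \;=\; -\Delta \bs\phi \cdot \m_0 + 2\,(\nabla\times \bs\phi)\cdot \m_0.
\]
The cross-product term is already in $L^2$. For the Laplacian term, the key observation is that $\bs\phi \cdot \m_0 = 0$ pointwise, hence $\Delta(\bs\phi\cdot\m_0) = 0$ as a distribution; expanding this with $\m_0 \in H^\infty$ as a multiplier yields the distributional identity
\[
\Delta \bs\phi \cdot \m_0 \;=\; -\,2\,\nabla \bs\phi : \nabla \m_0 \;-\; \bs\phi \cdot \Delta \m_0,
\]
which I would verify by pairing with $\zeta \in C_c^\infty(\R^2)$ and integrating by parts twice. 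Since $\nabla\m_0$ and $\Delta \m_0$ are bounded, the right-hand side belongs to $L^2$, and so
\[
\mu \;=\; 2\,\nabla \bs\phi : \nabla \m_0 + \bs\phi\cdot\Delta \m_0 + 2\,(\nabla\times\bs\phi)\cdot \m_0 \;\in\; L^2(\R^2).
\]
This is the step I expect to be the main technical obstacle: the identity $\Delta \bs\phi\cdot\m_0 = -2\nabla\bs\phi:\nabla\m_0 - \bs\phi\cdot\Delta\m_0$ only makes sense as a distribution a priori, and it must be justified purely from $\bs\phi \in H^1$, the constraint $\bs\phi\cdot\m_0 = 0$, and the smoothness of $\m_0$.

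\medskip
\noindent\textit{Step 3: Elliptic regularity.} With $\mu\m_0 \in L^2$ and $\bs f \in L^2$, the equation from Step~1 reads
\[
-\Delta \bs\phi \;=\; \bs f + \mu\,\m_0 - 2\,\nabla\times\bs\phi - (h-\Lambda(\m_0))\bs\phi,
\]
whose right-hand side lies in $L^2(\R^2;\R^3)$, since $\nabla\times\bs\phi \in L^2$ and $\Lambda(\m_0)$ is bounded. Standard elliptic regularity for the Laplacian on $\R^2$ (via the Fourier transform, or Calder\'on--Zygmund, applied componentwise on $\R^3$-valued fields) then yields $\bs\phi \in H^2(\R^2;\R^3)$, which completes the proof. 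The tangency $\bs\phi \in H^2(\R^2;T_{\m_0}\St)$ is automatic from the pointwise relation $\bs\phi \cdot \m_0 = 0$ and Lemma~\ref{lemma:density}.
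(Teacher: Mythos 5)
Your proposal is correct and is essentially the paper's own argument in a different packaging: the paper tests the form against $P_{\m_0}\bs\eta$ for arbitrary $\bs\eta\in C_c^\infty(\R^2;\R^3)$ and expands $P_{\m_0}\Delta\bs\phi$, using $\bs\phi\cdot\m_0=0$ to collect the normal terms into $L^2$ functions $\bs g_0,\bs g_1$, which is exactly your Lagrange-multiplier identity $\Delta\bs\phi\cdot\m_0=-2\nabla\bs\phi:\nabla\m_0-\bs\phi\cdot\Delta\m_0$ written in weak form. In particular the step you flagged as the main obstacle is sound (pair with $\zeta\m_0$, use $\nabla(\bs\phi\cdot\m_0)=0$ a.e.\ and integrate by parts once, which only needs $\bs\phi\in H^1$ and $\m_0\in H^\infty$), and both proofs then conclude identically via standard $L^2$ theory for $-\Delta$ on $\R^2$.
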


\begin{proof} Clearly $\mathcal{H}(\bs \phi, P_{\m_0} \bs \eta)= \langle \bs f, \bs \eta \rangle_{L^2}$ for all $\bs \eta \in C_0^\infty(\R^2;\R^3)$ and
\[
\mathcal{H}(\bs \phi, P_{\m_0} \bs \eta) = \langle \nabla \bs \phi, \nabla(P_{\m_0} \bs \eta) \rangle_{L^2}-\langle 
\bs g_0, \bs \eta \rangle_{L^2}
\]
for some $\bs g_0 \in L^2(\R^2;\R^3)$ bounded in terms of $\m$ and $\bs \phi$. Moreover, for smooth
$\bs \phi$ and $\m$ we have 
$\langle \nabla \bs \phi, \nabla(P_{\m} \bs \eta) \rangle_{L^2}= - \langle P_{\m}  \Delta \bs \phi, \bs \eta \rangle_{L^2}$ and
\[
P_{\m} \Delta \bs \phi = \Delta \bs \phi - \Delta  (\m \cdot \bs \phi) \m + \nabla \cdot( \nabla \m \cdot \bs \phi) \m +(\nabla \m : \nabla \bs \phi) \m
\]
hence by approximation and using $\bs \phi \cdot \m_0=0$
\[
 \langle \nabla \bs \phi, \nabla(P_{\m_0} \bs \eta) \rangle_{L^2} = \langle  \nabla \bs \phi, \nabla \bs \eta \rangle_{L^2} + \langle \bs g_1, \bs \eta \rangle_{L^2},
\]
where $\bs g_1 \in L^2(\R^2;\R^3)$ by Proposition \ref{prop:H^2}. Hence $-\Delta \bs \phi = 
\bs f +\bs g_0+\bs g_1$, and
the claim follows from standard $L^2$ theory for the Poisson equation.
\end{proof}

Recall that $\ker J= \{c \cdot \nabla \m_0: c \in \R^2\}$. In this context we shall use the notation 
\[\langle \bs \phi, \nabla \m_0\rangle_{L^2} 
= \sum_{j=1,2} \langle \bs \phi, \del_j \m_0\rangle_{L^2} \ein_j  \in \R^2.
\]

\begin{proposition}\label{prop:gap}
For $h\gg 1$ there exists a $\lambda>0$ increasing in $h$ such that
\[
\mathcal{H}( \bs \phi) \geq \lambda \|\bs \phi\|_{H^1}^2
\quad \text{for all} \quad \bs \phi  \in H^1(\R^2;T_{\m_0} \St) \text{  with  }  \langle \bs \phi, \nabla \m_0\rangle_{L^2}=0.\]
$\mathcal{J}: H^2(\R^2;T_{\m_0}\St) \to L^2(\R^2;T_{\m_0}\St) $ is a Fredholm operator
of index $0$ with
\[
\mathrm{ran} \; \mathcal{J} = \{\bs f \in L^2(\R^2;T_{\m_0} \St):
 \langle \bs f,  \nabla \m_0\rangle_{L^2}=0 \}.
\]
\end{proposition}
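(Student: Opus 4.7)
The plan is to establish the coercivity by a weak compactness and contradiction argument based on Lemma \ref{lemma:wlsc} and Proposition \ref{prop:stability}, and then to derive the Fredholm property by applying Lax-Milgram on the $L^2$-orthogonal complement of $\ker\mathcal{J}$ together with the regularity upgrade of Lemma \ref{lemma:H^2}.

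For the spectral gap, I will assume by contradiction a sequence $\bs\phi_k\in H^1(\R^2;T_{\m_0}\St)$ with $\|\bs\phi_k\|_{H^1}=1$, $\langle\bs\phi_k,\nabla\m_0\rangle_{L^2}=0$, and $\mathcal{H}(\bs\phi_k)\to 0$, and extract a weak limit $\bs\phi_k\rightharpoonup\bs\phi$ in $H^1$. Lemma \ref{lemma:wlsc}(ii) then gives $\Delta\mathcal{H}(\bs\phi_k)\to\Delta\mathcal{H}(\bs\phi)$, while the positive-definite quadratic form $\mathcal{H}_\infty$ (by Lemma \ref{lemma:wlsc}(i)) is weakly lower semicontinuous, so
\[
\mathcal{H}(\bs\phi)=\mathcal{H}_\infty(\bs\phi)-\Delta\mathcal{H}(\bs\phi)\leq\liminf_{k\to\infty}\bigl(\mathcal{H}_\infty(\bs\phi_k)-\Delta\mathcal{H}(\bs\phi_k)\bigr)=0.
\]
Proposition \ref{prop:stability} then forces $\bs\phi\in\mathrm{span}\{\del_1\m_0,\del_2\m_0\}$; passing to the weak limit in the linear constraint (legitimate since $\del_j\m_0\in L^2$) and using that the Gram matrix $\mathcal{D}_{ij}=\langle\del_i\m_0,\del_j\m_0\rangle_{L^2}$ is positive definite (from pointwise linear independence at the origin via \eqref{eq:non_deg} and smoothness of $\m_0$), we conclude $\bs\phi=0$. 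In particular $\Delta\mathcal{H}(\bs\phi_k)\to 0$, so $\mathcal{H}_\infty(\bs\phi_k)\to 0$, contradicting the subcriticality bound $\mathcal{H}_\infty(\bs\phi_k)\geq\nu\|\bs\phi_k\|_{H^1}^2=\nu>0$ from Lemma \ref{lemma:wlsc}(i). Monotone dependence of $\lambda$ on $h$ is tracked through the constant $\nu$ in Lemma \ref{lemma:wlsc}(i).

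For the Fredholm statement, the inclusion $\mathrm{ran}\,\mathcal{J}\subset\{\bs f\in L^2(\R^2;T_{\m_0}\St):\langle\bs f,\nabla\m_0\rangle_{L^2}=0\}$ is immediate from symmetry of $\mathcal{H}$ and \eqref{eq:kernelJacobi}: for every $\bs\phi\in H^2$,
\[
\langle\mathcal{J}\bs\phi,\del_j\m_0\rangle_{L^2}=\mathcal{H}(\bs\phi,\del_j\m_0)=\mathcal{H}(\del_j\m_0,\bs\phi)=\langle\mathcal{J}\del_j\m_0,\bs\phi\rangle_{L^2}=0.
\]
Conversely, given such an $\bs f$, I will solve $\mathcal{J}\bs\phi=\bs f$ by Lax-Milgram on the closed subspace $V=\{\bs\phi\in H^1(\R^2;T_{\m_0}\St):\langle\bs\phi,\nabla\m_0\rangle_{L^2}=0\}$: $\mathcal{H}$ is bounded on $H^1$ and coercive on $V$ by the spectral gap, yielding a unique $\bs\phi\in V$ with $\mathcal{H}(\bs\phi,\bs\psi)=\langle\bs f,\bs\psi\rangle_{L^2}$ for all $\bs\psi\in V$. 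A splitting $\bs\psi=\bs\psi_0+\tilde{\bs\psi}$ with $\bs\psi_0\in\mathrm{span}\{\del_j\m_0\}$ and $\tilde{\bs\psi}\in V$ extends the equation to all $\bs\psi\in H^1(\R^2;T_{\m_0}\St)$, since both sides vanish on $\bs\psi_0$ by the symmetry computation above and the orthogonality of $\bs f$. Lemma \ref{lemma:H^2} then lifts $\bs\phi$ to $H^2$, giving $\mathcal{J}\bs\phi=\bs f$. Since $\ker\mathcal{J}=\mathrm{span}\{\del_1\m_0,\del_2\m_0\}$ is two-dimensional while the closed range has codimension two, $\mathcal{J}$ is Fredholm of index zero.

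The main obstacle is the absence of a compact embedding $H^1(\R^2)\hookrightarrow L^2(\R^2)$, which prevents passage from weak $H^1$ convergence to strong $L^2$ convergence in the contradiction step. This is finessed by exploiting two special features of the Hessian: the compactness of $\Delta\mathcal{H}$ from Lemma \ref{lemma:wlsc}(ii) (ultimately a consequence of the $L^2$ decay of $\Lambda(\m_0)$ shown via Proposition \ref{prop:H^2}) and the full $H^1$-coercivity of $\mathcal{H}_\infty$ from Lemma \ref{lemma:wlsc}(i), which together deliver the contradiction without requiring any strong $L^2$ convergence of the sequence itself.
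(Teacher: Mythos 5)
Your proof is correct, and its two halves relate to the paper differently. The Fredholm half is essentially the paper's own argument (coercivity plus Riesz/Lax--Milgram on the constrained subspace, then Lemma \ref{lemma:H^2} to upgrade to $H^2$); in fact you are more careful than the paper on one point, namely extending the weak equation from test fields in $V$ to all of $H^1(\R^2;T_{\m_0}\St)$ before invoking Lemma \ref{lemma:H^2}, using $\mathcal{H}(\bs\phi,\del_j\m_0)=0$ and $\langle\bs f,\del_j\m_0\rangle_{L^2}=0$ --- a step the paper leaves implicit. The spectral-gap half takes a genuinely different route. The paper normalizes in $L^2$: it sets $I_0=\inf\{\mathcal{H}(\bs\phi):\bs\phi\in\mathbb{H}^1,\ \|\bs\phi\|_{L^2}=1\}$, assumes $I_0=0$, passes to the shifted form $\mathcal{H}_\gamma=\mathcal{H}-\gamma\|\cdot\|_{L^2}^2$ whose infimum $I_\gamma$ is negative, proves attainment of $I_\gamma$ by the direct method (weak lower semicontinuity from Lemma \ref{lemma:wlsc}(iii) together with the G\aa rding-type bound \eqref{eq:prel_lower}), forces $\|\bs\phi_*\|_{L^2}=1$ by a homogeneity argument, and contradicts $I_0=0$ by applying Proposition \ref{prop:stability}(ii) to the nonzero minimizer $\bs\phi_*$; only then is the $L^2$ gap converted into an $H^1$ gap via \eqref{eq:prel_lower}, yielding $\lambda=\nu/(1+\mu/I_0)$. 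You instead normalize in $H^1$ and run a single contradiction: the weak limit must lie in $\ker\mathcal{J}$ and be $L^2$-orthogonal to it, hence vanish, whereupon compactness of $\Delta\mathcal{H}$ and coercivity of $\mathcal{H}_\infty$ collapse the normalization. Your version is shorter and avoids the attainment/scaling step entirely; what the paper's version buys is a semi-explicit constant.

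That last point is where your proposal has a (minor) gap: the statement asserts $\lambda$ can be taken increasing in $h$, and your closing remark that monotonicity ``is tracked through $\nu$'' is not substantiated. A pure contradiction argument gives no quantitative relation between $\lambda$ and $\nu$ --- it only shows the relevant infimum is positive, with no control on its size. To say anything about $h$-dependence one needs a formula, which is exactly what the paper's two-step structure provides ($\lambda=\nu/(1+\mu/I_0)$ with $\nu$ from Lemma \ref{lemma:wlsc}(i) and $\mu=\|\Lambda(\m_0)\|_{L^\infty}^2$); admittedly even the paper does not track how $I_0$ and $\mu$ themselves vary with $h$, so this defect is shared, but your proof as written leaves that clause of the Proposition unaddressed.
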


\begin{proof}
For $k=0,1,2$ we define the Hilbert spaces 
\[
\mathbb{H}^k=\{ \bs \phi \in H^k(\R^2;T_{\m_0} \St):  \langle \bs \phi,  \nabla \m_0\rangle_{L^2}=0\}.
\]
We let $I_0:=\inf\{ \mathcal{H}(\bs \phi): \, \bs \phi \in \mathbb{H}^1, \, \|\bs \phi\|_{L^2} = 1\}\ge 0$
and suppose $I_0 = 0$. Consider
\[
I_{\gamma} := \inf\{ \mathcal H_{\gamma}(\bs \phi): \, \bs \phi \in \mathbb{H}^1, \, \|\bs \phi\|_{L^2}=1\} 
\]
where
\[
\mathcal H_{\gamma}(\bs \phi)= \mathcal{H}(\bs \phi)-\gamma \|\bs \phi\|_{L^2}^2,
\]
which is an $H^1$ norm and therefore weakly lower semicontinuous for some positive $\gamma$ by virtue of Lemma \ref{lemma:wlsc}.
If $I_\gamma=0$, then $\mathcal{H}(\bs \phi) \ge \gamma \|\bs \phi\|_{L^2}^2$ which implies $I_0>0$. Hence $I_\gamma<0$,
and we claim the infimum is attained. From Lemma \ref{lemma:wlsc} (i) we also obtain
\begin{equation}\label{eq:prel_lower}
\mathcal{H}(\bs \phi) \ge \mathcal H_\gamma (\bs \phi)  \ge \nu \|\bs \phi\|_{H^1}^2 - \mu \|\bs \phi\|_{L^2}^2
\end{equation}
with $\nu >0$ and $\mu= \|\Lambda(\m_0)\|_{L^\infty}^2$. Since $\mathcal H_{\gamma}(\bs \phi) \geq \nu \|\bs \phi \|_{H^1}^2 - \mu $ for $\|\bs \phi\|_{L^2}=1$, there exists for $I_\gamma$ a minimizing sequence
\[
\bs \phi_k \rightharpoonup \bs \phi_\ast \quad\text{weakly in } H^1(\R^2;T_{\m_0} \St).
\]
Clearly $\bs \phi_\ast \in \mathbb{H}^1$ and $\|\bs \phi_\ast\|_{L^2}^2 \leq \liminf_{k \to \infty} \|\bs \phi_k\|_{L^2}^2 =1$. Since $I_{\gamma}<0$ and $\mathcal H_\gamma(0)=0$, 
it follows from weak lower semicontinuity that $\bs \phi_\ast \not=0$. Therefore
\[
\|\bs \phi_\ast\|_{L^2}^2 \, I_{\gamma} \leq  \mathcal H_{\gamma} (\bs \phi_\ast)  \leq \liminf_{k \to \infty} \mathcal H_{\gamma} (\bs \phi_k)=I_{\gamma}
\]
which implies that $\|\bs \phi_\ast \|_{L^2}^2 = 1$ and $I_{\gamma}=H_\gamma(\bs \phi_\ast)$. Hence
$\mathcal{H}(\bs \phi) \geq \mathcal{H}(\bs \phi_\ast)$ for all $\|\bs \phi\|_{L^2}=1$, and therefore
\[
\mathcal{H}(\bs \phi)  \geq \mathcal{H}(\bs \phi_\ast) \|\bs \phi \|_{L^2}^2 \quad \text{for all} \quad \bs \phi \in \mathbb{H}^1 \setminus \{0\}, 
\]
a contradiction, since $\mathcal{H}(\bs \phi_\ast)>0$.  Therefore $I_0>0$, and together with the lower bound \eqref{eq:prel_lower}, the claim follows with $\lambda=\nu/(1+\mu/I_0)$. \\
It now follows from the Riesz representation theorem that for $f \in \mathbb{H}^0$ there exists a unique $\bs \phi \in \mathbb{H}^1$ with
$\mathcal{H}( \bs \phi, \bs \psi) = \langle f, \bs \psi \rangle_{L^2}$ for all $\bs \psi \in  \mathbb{H}^1$, while Lemma \ref{lemma:H^2} implies that $\bs \phi \in \mathbb{H}^2$ and hence $\mathcal{J} \bs \phi = \bs f$. Since $\mathcal{J}:\mathbb{H}^2 \to \mathbb{H}^0$ is bounded, it is an isomorphism, and the claim follows.
\end{proof}

\section{Local minimality}\label{sec:min}
In this section we prove Theorem \ref{thm:minimality}.
The key observation is that the energy difference can be expressed in terms of the extended Hessian. This is in the spirit of \cite{mcintosh_simon1990, hardt1992harmonic}, but with stronger conclusions.

\begin{lemma} \label{lemma:quadratic}
For $\m \in H^1_{\ein_3}(\R^2;\St)$ we have
$
E(\m) - E(\m_0) = \frac{1}{2} \mathcal{H}(\m- \m_0).
$
\end{lemma}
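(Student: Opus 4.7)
The plan is to exploit the observation that, once the helicity is rewritten with the null-Lagrangian correction as in \eqref{eq:helicity}, $E$ becomes a bounded, purely quadratic form in $\m-\ein_3\in H^1(\R^2;\R^3)$. Concretely, I would introduce the symmetric bilinear form $B$ on $H^1(\R^2;\R^3)$ characterised by $B(\bs\phi,\bs\psi)=\tfrac{1}{2}\mathcal H_\infty(\bs\phi,\bs\psi)$ and verify that $E(\ein_3+\bs u)=B(\bs u,\bs u)$; the absence of a linear term hinges on $\nabla\times\ein_3=0$ together with the null-Lagrangian identity $\int_{\R^2}(\del_1 u_2-\del_2 u_1)\dif x=0$ for $\bs u\in H^1$, which is exactly the content of the modification giving \eqref{eq:helicity}.

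With this in hand, writing $\m-\ein_3=(\m_0-\ein_3)+(\m-\m_0)$ and expanding by bilinearity gives
\begin{equation*}
E(\m)-E(\m_0)\;=\;2B(\m_0-\ein_3,\,\m-\m_0)\;+\;\tfrac{1}{2}\mathcal H_\infty(\m-\m_0).
\end{equation*}
The quadratic contribution is already in the desired form. For the cross term, I would integrate by parts (legitimate since $\m_0\in H^\infty_{\ein_3}$ by Proposition \ref{prop:H^2}) to rewrite
\begin{equation*}
2B(\m_0-\ein_3,\bs\phi)\;=\;\int_{\R^2}\bigl(-\Delta\m_0+2\nabla\times\m_0+h(\m_0-\ein_3)\bigr)\cdot\bs\phi\,\dif x
\end{equation*}
for every $\bs\phi\in H^1(\R^2;\R^3)$, and then invoke the Lagrange-multiplier form of the Euler--Lagrange equation, namely $-\Delta\m_0+2\nabla\times\m_0+h(\m_0-\ein_3)=\Lambda(\m_0)\,\m_0$ with $\Lambda$ as in \eqref{eq:fm0}, to reduce the cross term to $\int_{\R^2}\Lambda(\m_0)\,\m_0\cdot\bs\phi\,\dif x$.

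The last step specializes $\bs\phi=\m-\m_0$ and uses the pointwise constraint identity $\m_0\cdot(\m-\m_0)=-\tfrac{1}{2}|\m-\m_0|^2$, valid a.e.\ because $|\m|=|\m_0|=1$. By the definition \eqref{eq:DeltaH} the cross term becomes
\begin{equation*}
2B(\m_0-\ein_3,\m-\m_0)\;=\;-\tfrac{1}{2}\int_{\R^2}\Lambda(\m_0)\,|\m-\m_0|^2\,\dif x\;=\;-\tfrac{1}{2}\Delta\mathcal H(\m-\m_0),
\end{equation*}
and combining with the decomposition \eqref{eq:H} yields $E(\m)-E(\m_0)=\tfrac{1}{2}\mathcal H(\m-\m_0)$. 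The only technical point that requires attention is that the integrations by parts and the pointwise use of the Euler--Lagrange equation are valid even though $\m-\m_0$ is merely in $H^1$; this follows from the higher regularity of $\m_0$ and the continuity of $\mathcal H_\infty$ and $\Delta\mathcal H$ on $H^1$, so no substantive obstacle is anticipated.
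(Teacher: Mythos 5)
Your proposal is correct and follows essentially the same route as the paper: expand the quadratic form $\tfrac12\mathcal H_\infty(\m-\ein_3)$ around $\m_0$, rewrite the cross term via the Lagrange-multiplier form of the Euler--Lagrange equation as $\int_{\R^2}\Lambda(\m_0)\,\m_0\cdot(\m-\m_0)\dif x$, and absorb it into $-\tfrac12\Delta\mathcal H(\m-\m_0)$ using the unit-sphere constraint. The only cosmetic difference is that you use the identity $\m_0\cdot(\m-\m_0)=-\tfrac12|\m-\m_0|^2$ directly, whereas the paper reaches the same cancellation by writing the residual integrand as $\tfrac12\Lambda(\m_0)(\m+\m_0)\cdot(\m-\m_0)=0$.
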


\begin{proof}
Define $\bs \xi = \m-\m_0 \in H^1(\R^2;\R^3)$. Then
\[
 E(\bs m )-E(\bs m_0)  =\frac{1}{2} \mathcal H_{\infty}(\bs \xi) + \mathcal H_{\infty}(\bs \xi, \bs m_0 -\ein_3)
\]
and using that $\bs \tau(\m_0)=0$
\[
\mathcal H_{\infty}(\bs \xi, \bs m_0 -\ein_3)= \int_{\R^2}\Lambda(\m_0) \m_0 \cdot  \bs\xi \dif x.
\]
It follows that 
\begin{eqnarray*}
 E(\bs m )-E(\bs m_0)  &=& \frac{1}{2} \mathcal H_{\infty}(\bs \xi) -\Delta \mathcal{\mathcal H}(\bs \xi) +\int_{\R^2}\Lambda(\m_0) \m \cdot  \bs\xi \dif x \\
 &=& \frac{1}{2} \mathcal{\mathcal H}(\bs \xi)+\frac{1}{2} \int_{\R^2}\Lambda(\m_0)( \m +\m_0) \cdot ( \m -\m_0) \dif x
\end{eqnarray*}
where the integrand of the last term vanishes identically.
\end{proof}

The requisite orthogonality can be established by an appropriate translation.

\begin{lemma} \label{lemma:trans}
There exist $\eps>0$ and $c >0$ such that for $\m \in H^1(\R^2;\St)$
with $\|\m-\m_0\|_{H^1} <\eps$ there exists a unique $x \in \R^2$ such that
\[
\langle \m(\cdot -x)-\m_0, \nabla \m_0 \rangle_{L^2} =0
\quad
\text{and}
\quad
 \| \m(\cdot -x)-\m_0\|_{H^1} \le  c \eps.
 \]
\end{lemma}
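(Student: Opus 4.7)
The plan is to apply the implicit function theorem to the map $F:\R^2 \times H^1_{\ein_3}(\R^2;\St) \to \R^2$ with components
\[
F_j(x,\m) = \int_{\R^2}\bigl(\m(y-x)-\m_0(y)\bigr)\cdot \del_j \m_0(y)\,\dif y, \qquad j=1,2.
\]
Trivially $F(0,\m_0)=0$, and $F$ is affine in $\m$, hence $C^\infty$ in that variable. To handle $x$-differentiability when $\m$ is only of $H^1$ regularity, I would split off the constant $\ein_3$ (using $\int \ein_3\cdot\del_j\m_0=0$) and differentiate under the integral, producing
\[
\partial_{x_k}F_j(x,\m) = -\int_{\R^2}\del_k\m(y-x)\cdot\del_j\m_0(y)\,\dif y,
\]
which is jointly continuous in $(x,\m)$ by translation continuity in $L^2$. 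At $(0,\m_0)$ this gives $D_xF(0,\m_0)=-\mathcal D$, where $\mathcal D_{jk}=\int\del_j\m_0\cdot\del_k\m_0\,\dif x$ is the dissipative tensor from \eqref{eq:Thiele}. The $O(2)$ symmetry of the axisymmetric $\m_0$ forces $\mathcal D$ to be a positive multiple of the identity, so $D_xF(0,\m_0)$ is invertible.

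The implicit function theorem then supplies $\eps>0$ and a $C^1$ map $\m\mapsto x(\m)$ defined on $\{\|\m-\m_0\|_{H^1}<\eps\}$, with $x(\m_0)=0$, $F(x(\m),\m)=0$, and $x(\m)$ locally unique near $0$. For the quantitative estimate, linearity of $F$ in $\m$ gives
\[
|F(0,\m)| = |\langle \m-\m_0,\nabla\m_0\rangle_{L^2}| \le \|\nabla\m_0\|_{L^2}\,\|\m-\m_0\|_{L^2} \le C_0\eps,
\]
so Lipschitz dependence of the implicit function forces $|x(\m)|\le C_1\eps$. Then the triangle inequality, translation invariance of $\|\cdot\|_{H^1}$, and the standard bound $\|\m_0(\cdot-x)-\m_0\|_{H^1}\le |x|\,\|\m_0\|_{H^2}$ (available since $\m_0\in H^\infty$ by Proposition \ref{prop:H^2}) yield
\[
\|\m(\cdot-x(\m))-\m_0\|_{H^1} \le \|\m-\m_0\|_{H^1} + |x(\m)|\,\|\m_0\|_{H^2} \le \bigl(1+C_1\|\m_0\|_{H^2}\bigr)\eps,
\]
which is the desired bound with $c=1+C_1\|\m_0\|_{H^2}$.

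The only step requiring any care is the $x$-differentiability of $F$ when $\m$ carries only $H^1$ regularity; this is exactly where the smoothness of the fixed target $\m_0$ (Proposition \ref{prop:H^2}) is used, and the argument is then routine. Global (rather than near $0$) uniqueness is neither claimed nor needed, because $\|\m_0(\cdot-x)-\m_0\|_{H^1}$ stays uniformly bounded away from $0$ for $|x|$ large, so any $x$ satisfying $\|\m(\cdot-x)-\m_0\|_{H^1}\le c\eps$ with $\eps$ small must lie in the IFT neighborhood of $0$, where uniqueness has already been established.
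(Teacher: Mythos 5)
Your proposal is correct and follows essentially the same route as the paper: the implicit function theorem applied to $F(x,\m)=\langle \m(\cdot-x)-\m_0,\nabla\m_0\rangle_{L^2}$ near $(0,\m_0)$, a Lipschitz bound $|x(\m)|\lesssim \|\m-\m_0\|_{H^1}$, and then the triangle inequality combined with translation invariance and $\|\m_0(\cdot-x)-\m_0\|_{H^1}\lesssim |x|$ from the smoothness of $\m_0$. The only minor differences are that the paper deduces invertibility of $D_xF(0,\m_0)=-\mathcal{D}$ from $Q(\m_0)\neq 0$ (linear dependence of $\del_1\m_0,\del_2\m_0$ would force degree zero) rather than from rotational equivariance, and your closing remark on why local uniqueness suffices makes explicit a point the paper leaves implicit.
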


\begin{proof} Existence and uniqueness follow
 from the implicit function theorem applied to the $C^1$ mapping
$F(x, \bs \xi)=\langle \m(\cdot -x)-\m_0, \nabla \m_0 \rangle_{L^2}$, where $\bs \xi=\m-\m_0$, in the vicinity of $(x, \bs \xi)=(0,0)$ in $H^1(\R^2;\R^3) \times \R^2$. 
The Jacobian $[\del_x F(0,0)]_{jk}=- \langle \del_j \m_0, \del_k \m_0 \rangle_{L^2}$ is non-singular in $\R^{2 \times 2}$ since the topological degree $Q(\m_0)\not=0$. Hence there exists a $C^1$ mapping $\bs \xi \mapsto x(\bs \xi)$ such that $F(x(\bs \xi),\bs \xi )=0$  for $ \|\bs \xi\|_{H^1}< \eps$. Implicit differentiation implies $|x(\bs \xi)| \lesssim \| \bs \xi \|_{H^1}$. Hence the
estimate follows from translation invariance of the $H^1$ norm and the fact that
$\|\m_0(\cdot -x)-\m_0\|_{H^1} \lesssim |x|$ according to Proposition \ref{prop:H^2}.
\end{proof}

\begin{proof}[Proof of Theorem \ref{thm:minimality}] By virtue of Lemma \ref{lemma:trans} we can assume $\langle \bs \xi, \nabla \m_0 \rangle_{L^2} =0$ where $\bs \xi =\m-\m_0$. We decompose $\bs \xi =\bs \xi^\top +\bs  \xi^\perp$  
where $\bs \xi^\top=P_{\m_0} \bs \xi$, hence 
\begin{equation} \label{eq:perp}
\bs  \xi^\perp=(\bs \xi \cdot \m_0) \m_0 = - \frac{1}{2}|\bs \xi|^2 \m_0,
\end{equation}
and in particular
\begin{equation} \label{eq:perp_norm}
\| \bs  \xi^\perp \|_{L^2} = \frac{1}{4} \| \bs \xi\|_{L^4}^2 \le c \| \bs  \xi \|_{H^1}^2. 
\end{equation}
It follows from
Lemma \ref{lemma:quadratic} and Proposition \ref{prop:gap}
\[
E(\m) - E(\m_0) \ge\frac{1}{2} \left(\lambda \| \bs \xi^\top \|_{H^1}^2+ \mathcal{H}( \bs  \xi^\perp) 
+ 2\mathcal{H}(\bs \xi^\top, \bs  \xi^\perp) \right). 
\]
By \eqref{eq:perp} we observe that the helicity terms in $\mathcal{H}_\infty( \bs  \xi^\perp)$ and $\Delta\mathcal{H}( \bs  \xi^\perp)$ cancel, so that taking into account \eqref{eq:perp_norm}
\[
\mathcal{H}( \bs  \xi^\perp) =  \int_{\R^2}  | \nabla\bs  \xi^\perp|^2_{L^2} + \frac{h \, (\ein_3 \cdot \m_0) - |\nabla \m_0|^2}{4} |\bs \xi|^4  \dif x \ge  \| \bs  \xi^\perp\|^2_{H^1} - C  \| \bs  \xi\|^4_{H^1},
\]
where $C$ denotes a positive constant that only depends on $\m_0$.
Regarding $\mathcal{H}(\bs \xi^\top, \bs  \xi^\perp)$, we estimate the leading order term 
\begin{eqnarray*}
\left| \int_{\R^2} \nabla \bs \xi^\top : \nabla \bs  \xi^\perp \dif x \right| &\le&  \frac{1}{4}\int_{\R^2} 
|\nabla (P_{\m_0} \bs \xi) : \nabla( |\bs  \xi|^2 \m_0)| \dif x\\
&\le &  \frac{1}{4}\int_{\R^2}  |\bs  \xi|^2 |\nabla \bs \xi| |\nabla \m_0| + |\nabla (|\bs \xi|^2 \m_0)| |\nabla (P_{\m_0})\bs \xi|  \dif x \\
&\le& C \left( \|\nabla \bs \xi\|_{L^2} \|\bs \xi\|_{L^4}^2+\|\bs \xi\|_{L^4}^3 \right) \le C\|\bs \xi\|^3_{H^1},
\end{eqnarray*}
taking into account the orthogonality of $P_{\m_0}(\nabla \bs \xi)$ and $\m_0$.
Improved estimates are valid for the remaining terms, hence $|\mathcal{H}(\bs \xi^\top, \bs  \xi^\perp)| \le  C\|\bs \xi\|^3_{H^1}$. It follows that for $\| \bs \xi \|_{H^1}< \eps<1$
\[
E(\m) - E(\m_0) \ge \frac 1 2 \left( \min\{\lambda, 1\} -C \eps \right) \| \bs \xi \|_{H^1}^2 
\]
which implies the claim for $\eps$ sufficiently small.
\end{proof}

\section{Solitonic motion} \label{sec:trav}
In this section we prove Theorem \ref{thm:wave}. Passing to a moving frame $\m=\m(x-ct)$ for some unknown $c \in \R^2$, the Landau-Lifshitz-Gilbert equation (\ref{eq:LLG}) with spin velocity $v \in \R^2$ becomes a stationary  operator equation
\[
\bs F(\m,c,v) =0
\]
where 
\[
\bs  F(\m,c,v) = - \bs \tau(\m) + \m \times \left[ (v-c) \cdot \nabla \m \right] + (\beta v- \alpha c) \cdot \nabla \m.
\]

It is convenient to project $\bs F(\m,c,v) \in T_{\m} \St$ onto the tangent space $T_{\m_0} \St$. We need the following variant of a lemma from \cite{hardt1992harmonic}:
\begin{lemma}\label{la:Fv_equiv}
For $\bs  F =\bs F(\m,c,v)$ and $\|\m-\m_0\|_{H^2}$ sufficiently small we have
\[
\bs F =0 \iff P_{\m_0} \bs F=0.
\]
\end{lemma}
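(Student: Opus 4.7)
The forward implication $\bs F = 0 \Rightarrow P_{\m_0} \bs F = 0$ is immediate, so the content lies in the converse. My plan is to exploit the fact that $\bs F(\m, c, v)$ lies pointwise in $T_{\m} \St$ and to combine this with a Sobolev embedding estimate forcing $\m \cdot \m_0$ to be pointwise positive.

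First I would verify that $\bs F(\m, c, v) \perp \m$ pointwise, by inspecting each term of
\[
\bs F(\m,c,v) = - \bs \tau(\m) + \m \times \left[ (v-c) \cdot \nabla \m \right] + (\beta v- \alpha c) \cdot \nabla \m.
\]
Indeed $\bs \tau(\m) = P_{\m}\bs h_{\rm eff}(\m)$ satisfies $\bs \tau(\m) \cdot \m = 0$; the middle term is a cross product with $\m$, hence orthogonal to $\m$; and the last term is orthogonal to $\m$ because $\m \cdot \nabla \m = \tfrac{1}{2}\nabla|\m|^2 = 0$.

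Next, decompose $\bs F$ along $\m_0$ and its orthogonal complement:
\[
\bs F = P_{\m_0}\bs F + (\bs F \cdot \m_0)\, \m_0.
\]
Assuming $P_{\m_0} \bs F = 0$, one gets $\bs F = (\bs F \cdot \m_0)\,\m_0$. Taking the pointwise inner product with $\m$ and using $\bs F \cdot \m = 0$ yields
\[
0 = \bs F \cdot \m = (\bs F \cdot \m_0)(\m_0 \cdot \m).
\]
So it suffices to show $\m \cdot \m_0 > 0$ pointwise. Using the identity $\m \cdot \m_0 = 1 - \tfrac{1}{2}|\m - \m_0|^2$ together with the Sobolev embedding $H^2(\R^2) \hookrightarrow C^0(\R^2)$, we have $\|\m-\m_0\|_{L^\infty} \lesssim \|\m-\m_0\|_{H^2}$, so $\m \cdot \m_0 \geq 1 - \tfrac{1}{2}\|\m-\m_0\|_{L^\infty}^2 > 0$ uniformly whenever $\|\m - \m_0\|_{H^2}$ is sufficiently small. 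Consequently $\bs F \cdot \m_0 = 0$ pointwise, hence $\bs F \equiv 0$.

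There is no real obstacle: the only ingredients are the $T_{\m}\St$-valuedness of $\bs F$ (a direct computation) and a smallness condition converting $H^2$ control into a uniform bound $\m \cdot \m_0 > 0$. The lemma is essentially the statement that near $\m_0$ the two tangent planes $T_{\m}\St$ and $T_{\m_0}\St$ intersect only in the zero vector orthogonal to both $\m$ and $\m_0$, and this is automatic as soon as $\m$ and $\m_0$ are not antipodal.
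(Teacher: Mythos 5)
Your proof is correct, and it reaches the conclusion by a genuinely different (though closely related) mechanism than the paper. Both arguments rest on the same two ingredients: the pointwise tangency $\bs F \cdot \m = 0$, which the paper uses implicitly in the form $P_{\m}\bs F = \bs F$, and the Sobolev embedding $H^2(\R^2) \hookrightarrow L^\infty$ converting $H^2$-smallness into uniform pointwise closeness of $\m$ to $\m_0$. Where you diverge is the final step: the paper writes
\[
|\bs F|^2 = \langle \bs F, (P_{\m}-P_{\m_0})\bs F\rangle \le \|P_{\m}-P_{\m_0}\|_{L^\infty}\,|\bs F|^2
\]
and concludes by a contraction-type estimate once $\|P_{\m}-P_{\m_0}\|_{L^\infty}<1$, whereas you use the exact algebraic decomposition $\bs F = P_{\m_0}\bs F + (\bs F\cdot\m_0)\,\m_0 = (\bs F\cdot\m_0)\,\m_0$ and kill the remaining scalar by pairing with $\m$, using $\m\cdot\m_0 = 1-\tfrac12|\m-\m_0|^2 > 0$. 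Your version avoids operator norms entirely and isolates the precise pointwise condition needed, namely $\m\cdot\m_0\neq 0$, i.e., that $P_{\m_0}$ restricted to $T_{\m}\St$ is injective; this is in fact pointwise equivalent to the paper's condition, since $P_{\m}-P_{\m_0} = \m_0\otimes\m_0 - \m\otimes\m$ has operator norm $|\sin\gamma|$, with $\gamma$ the angle between $\m$ and $\m_0$, so $\|P_{\m}-P_{\m_0}\|<1$ exactly when $\m\cdot\m_0\neq 0$. The two proofs buy the same thing under the same hypothesis; yours is slightly more elementary, the paper's slightly more compact.
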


\begin{proof} Necessity of $P_{\m_0} \bs F=0$ is obvious. Now suppose $P_{\m_0} \bs  F=0$. Then it follows from the projection 
property of $P_{\m_0}$ that
\[
|\bs  F |^2 = \langle \bs  F, (P_{\m} - P_{\m_0}) \bs F \rangle  \le \|P_{\m} - P_{\m_0}\|_{L^\infty} |\bs F |^2,
\]
hence $\bs  F=0$ provided $\|P_{\m} - P_{\m_0}\|_{L^\infty}<1$, which can also be estimated in terms of $\|\m-\m_0\|_{\sup} \lesssim \|\m-\m_0\|_{H^2}$ by Sobolev embedding.
\end{proof}

Assuming $h \gg 1$, we aim to construct $(\m,c)$ for small $v$ in the vicinity of the stationary solution $\m_0$ and $c=0$. To this end, we introduce the operator
\[
\mathcal{F}: H^2_{\ein_3}(\R^2;\St) \times \R^2  \times \R^2 \to L^2(\R^2;T_{\m_0}\St) \times  \St\]
between Hilbert manifolds defined by
\[
\mathcal{F} (\m,c,v) =\begin{pmatrix} P_{\m_0}\bs  F(\m,c,v)  \\ \m(0) \end{pmatrix}.
\]
By means of the implicit function theorem on Hilbert manifolds \cite{abraham2012manifolds}, we solve $\mathcal{F}(\m,c,v)=(0, -\ein_3)$ for small $v$. Since $\mathcal{F} (\m_0,0,0)=(0, -\ein_3)$, we only have to show invertibility of the differential 
\[
\del_{(\m,c)} \mathcal{F}(\m_0,0,0) : H^2(\R^2;T_{\m_0}\St) \times \R^2 \to L^2(\R^2;T_{\m_0}\St) \times T_{\m_0(0)}\St
\]
given by
\[
\del_{(\m,c)} \mathcal{F} (\m_0,0,0)   = 
\begin{pmatrix}
\mathcal{J} &   \bs S \\ \delta_0 & 0
\end{pmatrix},
\]
where $\bs S$ is the vector field given by
\[
\bs S c = - \left[  \m_0 \times ( c \cdot \nabla) \m_0 + \alpha \, (c \cdot \nabla) \m_0 \right].
\]

\begin{lemma}\label{la:isomorphism}
Suppose $\bs f \in L^2(\R^2;T_{\m_0}\St)$ and $\bs V \in  T_{\m_0(0)}\St$.
Then
\[
\mathcal{J} \bs u + \bs S c = \bs f \quad \text{and} \quad \bs u(0)=\bs V
\]
has a unique solution
\[
\bs u \in H^2(\R^2;T_{\m_0}\St) \quad \text{and}  \quad c \in \R^2.
\]
\end{lemma}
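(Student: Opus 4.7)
The plan is to solve the system in three stages, exploiting the Fredholm structure of $\mathcal{J}$ established in Proposition \ref{prop:gap}. Recall that the range of $\mathcal{J}\colon H^2\to L^2$ consists of those $\bs g\in L^2(\R^2;T_{\m_0}\St)$ satisfying the two orthogonality conditions $\langle \bs g,\partial_j\m_0\rangle_{L^2}=0$ for $j=1,2$, while the two-dimensional kernel is spanned by $\partial_1\m_0,\partial_2\m_0$. I will first use the two components of $c$ to absorb the two-dimensional obstruction $\langle \bs f,\nabla\m_0\rangle_{L^2}$, then invert $\mathcal{J}$ on $\mathbb{H}^2$ to produce a particular solution $\bs u_0$, and finally use the two-dimensional freedom in $\ker\mathcal{J}$ to arrange the pointwise condition $\bs u(0)=\bs V$.

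\textbf{Determining $c$.} Testing $\mathcal{J}\bs u+\bs Sc=\bs f$ against $\partial_j\m_0\in\ker\mathcal{J}$ (and using that $\mathcal{J}$ is self-adjoint) gives the necessary condition $\langle \bs Sc,\partial_j\m_0\rangle_{L^2}=\langle \bs f,\partial_j\m_0\rangle_{L^2}$. Using the triple-product identity and $|\m_0|=1$ one finds
\[
-\langle \bs Sc,\partial_j\m_0\rangle_{L^2}=\sum_{k}c_k\!\left(\int_{\R^2}\m_0\cdot(\partial_k\m_0\times\partial_j\m_0)\dif x+\alpha\,\mathcal{D}_{kj}\right)=:(Mc)_j,
\]
where the first summand is antisymmetric in $(k,j)$ with off-diagonal entry $\pm 4\pi Q(\m_0)=\mp 4\pi$, while $\mathcal{D}$ is the symmetric positive-definite Gram matrix of $\partial_1\m_0,\partial_2\m_0$ (positivity because $\ker\mathcal{J}$ is genuinely two-dimensional by Proposition \ref{prop:gap}). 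Since the antisymmetric part contributes nothing to the quadratic form, $c^{\top}Mc=\alpha\,c^{\top}\mathcal{D}c>0$ whenever $c\neq 0$, so $M\in\R^{2\times 2}$ is invertible and $c=M^{-1}\langle \bs f,\nabla\m_0\rangle_{L^2}$ is uniquely determined.

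\textbf{Solving the reduced equation and matching at the origin.} With this $c$, the residual $\bs f-\bs Sc$ lies in $\mathrm{ran}\,\mathcal{J}$, so Proposition \ref{prop:gap} yields a unique particular solution $\bs u_0\in\mathbb{H}^2$ of $\mathcal{J}\bs u_0=\bs f-\bs Sc$. The general $H^2$ solution is $\bs u=\bs u_0+a_1\partial_1\m_0+a_2\partial_2\m_0$; since $H^2(\R^2)\hookrightarrow C^0$, the evaluation $\bs u(0)$ makes sense, and the condition $\bs u(0)=\bs V$ becomes the $2\times 2$ linear system $a_1\partial_1\m_0(0)+a_2\partial_2\m_0(0)=\bs V-\bs u_0(0)$, uniquely solvable by the non-degeneracy \eqref{eq:non_deg}. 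Uniqueness of $(\bs u,c)$ follows by running the argument in reverse on the homogeneous problem: $Mc=0$ forces $c=0$, then $\bs u\in\ker\mathcal{J}$, and $\bs u(0)=0$ combined with \eqref{eq:non_deg} gives $\bs u\equiv 0$. The only delicate point is the invertibility of $M$, which crucially uses both the nontrivial topological charge $Q(\m_0)=-1$ and the strict positivity $\alpha>0$ of the Gilbert damping; everything else is a direct consequence of Proposition \ref{prop:gap} and \eqref{eq:non_deg}, both already in hand.
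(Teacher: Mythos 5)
Your proposal is correct and takes essentially the same route as the paper: determine $c$ from the Fredholm orthogonality condition against $\ker\mathcal{J}$ (arriving at the same $2\times 2$ matrix $A_{jk}=4\pi\epsilon_{jk}+\alpha\,\langle \del_j\m_0,\del_k\m_0\rangle_{L^2}$), solve $\mathcal{J}\bs u_0=\bs f-\bs S c$ modulo the kernel via Proposition \ref{prop:gap}, and fix the kernel coefficients through the point condition at the origin using \eqref{eq:non_deg}. The only cosmetic difference is the invertibility argument: the paper computes $\det A=(4\pi)^2+\alpha^2\det\mathcal{D}\ge(4\pi)^2$ via Cauchy--Schwarz (so the topological term alone suffices, uniformly as $\alpha\to 0$), whereas you use positive definiteness of the symmetric part (so $\alpha>0$ alone suffices) --- consequently your closing remark that invertibility ``crucially uses both'' $Q(\m_0)=-1$ and $\alpha>0$ is slightly off, since either ingredient does the job (and note the harmless sign slip: with your conventions $c=-M^{-1}\langle\bs f,\nabla\m_0\rangle_{L^2}$).
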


\begin{proof}
Proposition \ref{prop:gap} implies that $c \in \R^2$ is determined by the Fredholm condition 
\[
\bs f - \bs S c \perp  \ker \mathcal{J}=\mathrm{span}\{ \del_1 \m_0, \del_2 \m_0\},
\]
which may be written as a linear $2 \times 2$ system $Ac+b=0$ with
\[
b_j:=\langle \bs f,  \del_j \m_0 \rangle_{L^2}
\quad
\text{and} 
\quad
%(A c)_j = - \int_{\R^2} (\bs S c) \cdot \del_j \m_0   \quad \text{i.e.} \quad 
A_{jk} := 4 \pi \epsilon_{jk} + \alpha  \, \langle \del_j \m_0, \del_k \m_0 \rangle_{L^2}.
\] 
Here we have used that $Q(\m_0)=-1$ while
\[
\langle \del_j \m_0, \m_0 \times \del_k \m_0 \rangle_{L^2}= -4\pi Q(\m_0) \epsilon_{jk}.
\]
It follows from Cauchy-Schwarz that $\det A  > (4\pi)^2$ for all $\alpha>0$, which implies unique solvability. Then the equation for $\bs u$
\[
\mathcal{J} \bs u = \bs f - \bs S c
\]
has a solution $\bs u_0$ which is unique up to an element in $\ker \mathcal{J}$. 
Thanks to \eqref{eq:non_deg}, the second equation
\[
 \bs u(0)= \bs u_0(0)  + \sum_{j=1,2} \lambda_j \del_j \m_0(0) = \bs V
\]
selects a unique solution $\bs u = \bs u_0+ \sum_{j=1,2} \lambda_j \del_j \m_0$.
\end{proof}

We conclude that for $h \gg 1$ there exists $\eps>0$ and a smooth map 
\[
 v \mapsto (\m(v),c(v)) \in H^2_{\ein_3}(\R^2; \St) \times \R^2
\]
 with $\m(0)= \m_0$ and $c(0)=0$ such that $F(\m(v),c(v),v)=0$ for $|v|< \eps$.

\subsection*{Acknowledgements} We thank Radu Ignat for valuable discussions on the subject matter.
This work is partially support by Deutsche Forschungsgemeinschaft (DFG grant no. ME 2273/3-1).

\appendix
\section[Properties of the polar profile]{Properties of the polar profile}
Here we provide the proof of Proposition \ref{thm:theta}.
\begin{lemma}\label{la:thetaMonoton}
If $\theta$ is a solution of (\ref{eq:thetaODE})-(\ref{eq:thetaBoundary}), then
\[
0<\theta(r)<\pi \quad \text{for all } r \in (0,\infty) 
\]
and  $\theta$ is monotonically decreasing on $(0,\infty)$ for $h \gg 1$.
\end{lemma}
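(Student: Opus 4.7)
The plan is to split the proof into two parts: first the range bound $\theta \in (0,\pi)$ on $(0,\infty)$, then strict monotonicity $\theta' < 0$ on $(0,\infty)$. Both steps will rely on uniqueness for the initial value problem of \eqref{eq:thetaODE} at interior regular points $r > 0$, combined with the asymptotic information $\theta'(0) = -h/2 < 0$ supplied by Lemma \ref{la:thetaAsymtot}. A crucial preliminary observation is that $\theta \equiv 0$ and $\theta \equiv \pi$ are both stationary solutions of \eqref{eq:thetaODE}; hence if $\theta(r_0) \in \{0,\pi\}$ with $\theta'(r_0) = 0$ at some interior $r_0 > 0$, then by Cauchy--Lipschitz $\theta$ would coincide with that constant, contradicting the boundary condition at the opposite endpoint. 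Thus any touching of the endpoints must be transverse.

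For the range, it then suffices to rule out transverse crossings of the levels $\theta = 0$ and $\theta = \pi$. Since $\theta'(0) = -h/2 < 0$, the solution initially enters the strip $(0,\pi)$, so an excursion above $\pi$ would force a crossing $\theta(r^*) = \pi$ with $\theta'(r^*) > 0$ at some $r^* > 0$, and an excursion below $0$ a symmetric crossing. Rewriting \eqref{eq:thetaODE} in divergence form,
\[
(r\theta')' \;=\; \sin\theta \Bigl[\tfrac{\cos\theta}{r} - 2\sin\theta + hr\Bigr],
\]
I would integrate this against a suitably chosen weight (e.g.\ $\sin\theta$ or a soft barrier on $[r^*, \infty)$), combine with the boundary decay $\theta(\infty) = 0$, and extract a sign contradiction because for $h \gg 1$ the Zeeman contribution $hr\sin\theta$ dominates and has the wrong sign for an excursion outside $[0,\pi]$ to close up. This range-exclusion step is the first place where $h \gg 1$ enters essentially, and I expect it to be the main obstacle, since the statement applies to arbitrary solutions (not only minimizers) so variational competitors are unavailable, while the bracket $\cos\theta/r - 2\sin\theta + hr$ has mixed signs in different regimes of $r$ and requires careful book-keeping across the scales $r \lesssim 1/\sqrt{h}$ and $r \gtrsim 1/\sqrt{h}$.

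Given $\theta \in (0,\pi)$ on $(0,\infty)$, monotonicity is approached by analysing $w(r) := r\theta'(r)$, which satisfies $w(0^+) = 0$ (since $\theta'(0)$ is finite), $w(\infty) = 0$ (by the exponential decay of $\theta$), and $w(r) \sim -(h/2)\,r$ near the origin, so $w < 0$ on a right neighbourhood of $0$. Suppose for contradiction that $w$ vanishes at some smallest $r_1 > 0$: then $\theta$ attains its first interior local minimum at $r_1$ with $\theta(r_1) \in (0,\pi)$, and \eqref{eq:thetaODE} at $r_1$ yields
\[
0 \;\le\; \theta''(r_1) \;=\; \sin\theta(r_1)\Bigl[\tfrac{\cos\theta(r_1)}{r_1^2} - \tfrac{2\sin\theta(r_1)}{r_1} + h\Bigr].
\]
Since $\theta \to 0$ at infinity there must exist a subsequent local maximum at some $r_2 > r_1$ where the reversed inequality holds, which rearranges to $h r_2^2 \le 2r_2\sin\theta(r_2) - \cos\theta(r_2) \le 2r_2 + 1$ and hence confines $r_2$ (and therefore also $r_1$) to a scale $r \lesssim 1/\sqrt{h}$. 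The contradiction is then closed on this small scale by combining the rescaling $s = \sqrt{h}\,r$, under which \eqref{eq:thetaODE} reduces at leading order to a pendulum-type equation with strictly monotone heteroclinic profile, with the precise near-origin asymptotics of $\theta$ furnished by Lemma \ref{la:thetaAsymtot}; these together exclude any pair of extrema for $h \gg 1$, so $w < 0$ on $(0,\infty)$, i.e.\ $\theta$ is strictly decreasing.
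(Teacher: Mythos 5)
Both places where this lemma actually requires work are left as declarations of intent in your proposal, so there are genuine gaps at exactly the two points where a contradiction must be produced. For the range bound, your transversality observation (tangential touching of $\theta\in\{0,\pi\}$ is impossible by Cauchy--Lipschitz, since the constants solve \eqref{eq:thetaODE}) is correct and is also used by the paper; but the exclusion of \emph{transverse} crossings --- ``integrate $(r\theta')'=\sin\theta\,[\cos\theta/r-2\sin\theta+hr]$ against a suitably chosen weight and extract a sign contradiction'' --- is precisely the hard step, no weight is exhibited, and you concede yourself that the book-keeping is unresolved. The difficulty is real: at a hypothetical upward crossing $\theta(r^*)=\pi$ the bracket equals $hr^*-1/r^*$, whose sign depends on whether $r^*$ lies above or below $h^{-1/2}$, so no single sign-definite integral identity on $[r^*,\infty)$ is available, and the ``Zeeman domination'' heuristic fails outright for $r^*\lesssim h^{-1/2}$. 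The paper proceeds quite differently: it passes to logarithmic variables $\phi(t)=\pi-\theta(e^{-t})$, in which \eqref{eq:thetaODE} becomes the pendulum-type equation $\phi''=-\sin\phi\,f(t)$ with $f(t)=he^{-2t}-2e^{-t}\sin\phi-\cos\phi$, fixes explicit thresholds $t_0=\frac12\log(h/3)$ and $t_1=\frac12\log(2h)$ on either side of which $f$ has a definite sign, and then rules out excursions by oscillation arguments (a trajectory that crosses an equilibrium with the wrong derivative sign in a region where $f$ has fixed sign must oscillate about a neighbouring equilibrium and so cannot meet the boundary condition at the corresponding end), together with ODE uniqueness for the tangential case.

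For monotonicity, your localization $r_1<r_2\lesssim h^{-1/2}$ of a hypothetical interior maximum is correct and parallels the paper's threshold $t_1$, but the closing step is an assertion rather than an argument. Rescaling $s=\sqrt h\,r$ does not reduce the problem to ``a pendulum equation with strictly monotone heteroclinic profile'': the large-$h$ profile is genuinely two-scale --- by \eqref{eq:theta_0} the core where $\theta$ falls from $\pi$ has width $O(1/h)$, so the rescaled slope at the origin is $-\sqrt h/2$ and blows up, while by \eqref{eq:theta_infinity} the tail lives on scale $h^{-1/2}$ --- hence the rescaled profiles do not converge to a regular heteroclinic, and any ``leading order'' argument would in addition require compactness and uniformity in $h$ that you have not established. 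The paper closes this step at fixed large $h$, with no limit: it takes the first critical point of $\phi$ in the logarithmic variable (equivalently the largest critical radius), distinguishes cases according to which interval $(2k\pi,(2k+1)\pi)$ or $((2k-1)\pi,2k\pi)$ contains the critical value, and compares the quantity $\frac12(\phi')^2-\frac12\sin^2\phi-he^{-2t}\cos\phi$ at suitably chosen pairs of points to reach a sign contradiction. A further, minor but real, gap: your $r_1$ need not be a local minimum, since $\theta'(r_1)=0$ with $\theta''(r_1)=0$ (an inflection with $\theta$ still decreasing) is not excluded, and in that case no ``subsequent local maximum'' need exist, so your contradiction scheme never starts; this case needs separate treatment, e.g.\ by differentiating \eqref{eq:thetaODE} at $r_1$ and invoking uniqueness. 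In short, the skeleton (uniqueness plus localization of extrema to the scale $h^{-1/2}$) overlaps with the paper, but both decisive contradictions are missing, and the mechanisms you propose for them are unlikely to work as stated.
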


\begin{proof}
We make a fine analysis of the ordinary equation in the spirit of the argument for the radial solution of a Ginzburg-Landau-type equation in \cite{hang2001static}. The rescaled function $\phi(t)=\pi-\theta(e^{-t})$ satisfies the second order differential equation
\begin{equation}\label{eq:rescaled}
\phi''(t) = -\sin \phi(t) f(t), \quad \text{where } f(t):=  he^{-2t} - 2e^{-t} \sin \phi - \cos \phi
\end{equation}
with the boundary conditions
\[
\lim_{t \to -\infty} \phi(t) = \pi, \quad
\lim_{t \to +\infty} \phi(t) = 0.
\]
We choose $t_0 = \frac{1}{2}\log \frac{h}{3}$ so that $f(t)>0$ for any $t<t_0$. Moreover, for $h$ large enough we have
\[
|\cos \phi(t)-1| < \varepsilon, \quad |\sin \phi(t)| < \varepsilon \quad \text{and} \quad \phi(t) \in (-\frac{\pi}{4},\frac{\pi}{4}) \quad \text{for all } t<t_0
\]
with some small $\varepsilon>0$, e.g. $\varepsilon=\frac{1}{4}$. We have $\phi(t)<\pi$ for all $t \in (-\infty,+\infty)$. Otherwise there exists a $t^* < t_0$ with $\phi(t^*) \in (\pi,2\pi)$ and $\phi'(t^*)<0$. Then $\phi''(t^*)>0$ and $\phi$ will be oscillating around $2\pi$ as $t \to -\infty$.

We choose $t_1=\frac{1}{2}\log 2h > t_0$ so that $f(t)<0$ for all $t>t_1$. Then either $\phi(t)>0$ for all $t \geq t_1$ or $\phi(t)<0$ for all $t \geq t_1$. If this is not the case, then there exists a $t^*>t_1$ such that $\phi(t^*)=0$, and hence $\phi''(t^*) = 0$. Then $\phi'(t^*)$ must be nonzero according to the uniqueness theorem for ordinary differential equations. If $\phi'(t^*)>0$, then $\phi(t)>0$ and $\phi'(t)>0$ for $t>t^*$ and very close to $t^*$. Hence $f(t)<0$ one can deduce that $\phi$ will be oscillating around $\pi$ as $t \to \infty$, which is a contradiction. Similarly, $\phi'(t^*)<0$ also leads to a contradiction.

Now we assume $\phi(t)>0$ for any $t \geq t_1$, then $\phi'(t)<0$ for $t \geq t_1$. We claim $\phi'(t)<0$ for any $t \in \R$; then it follows that $\phi(t) \in (0,\pi)$.  If $\phi'$ vanishes at some points, set $s_1=\inf \{ t \in \R, \, \phi'(t) = 0 \}$. Since $\phi(t)<\pi$ for any $t\in \R$ and $\phi(-\infty) = \pi$ we can easily deduce that
$-\infty < s_1 < t_1$. 
If $\phi(s_1) \in (2k \pi, (2k+1)\pi)$ for some non-positive integer $k$, then $\phi''(s_1)>0$ and it follows by the choice of $t_1$ that $\sin \phi(s_1) \cos \phi(s_1)  = \sin \phi(s_1) e^{-s_1}(he^{-s_1}-2 \sin \phi(s_1))+\phi''(s_1)>0$ and hence $\phi(s_1) \in (2k \pi, 2k\pi+\frac{\pi}{2})$. Therefore there exists some $s_2 < s_1$ such that $\phi(s_2)=4k\pi + \pi - \phi(s_1)$.  On $[s_2,s_1]$, since $\phi'<0$ and $\sin \phi>0$ we get $\sin \phi \cos \phi - \phi''>0$ and hence $\sin^2 \phi(s_2) - (\phi'(s_2))^2  > \sin^2 \phi(s_1) - (\phi'(s_1))^2$ which yields a contradiction $- (\phi'(s_2))^2>0$.
If $\phi(s_1) \in ((2k-1) \pi, 2k\pi)$ for some non-positive integer $k$, there exist $s_3<s_2<s_1$ such that $\phi(s_2)=2k\pi$ and $\phi(s_3) = 4k\pi-\phi(s_1)$. On $[s_3,s_2]$ we have $\phi''-\sin \phi \cos \phi + he^{-2s_3} \sin \phi \geq \phi''-\sin \phi \cos \phi + he^{-2t} \sin \phi - 2e^{-t} \sin^2 \phi = 0$ and hence $\frac{1}{2}(\phi'(s_3))^2 - \frac{1}{2}\sin^2 \phi(s_3) - he^{-2s_3} \cos \phi(s_3) \leq \frac{1}{2}(\phi'(s_2))^2 -h e^{-2s_3}$. Similarly we get $\frac{1}{2}(\phi'(s_2))^2 -he^{-2s_1} \leq -\frac{1}{2} \sin^2 \phi(s_1) - h e^{-2s_1} \cos \phi(s_1)$. These two inequalities imply $\phi'(s_3)=0$, which contradicts the choice of $s_1$.
If $\phi(t)<0$ for any $t \geq t_1$, then $\phi'(t)>0$ for $t \geq t_1$. The above arguments also lead to a contradiction. Hence $\theta'(r) = e^{-t} \phi'(t)<0$ for any $r>0$ and $\theta \in (0,\pi)$.
\end{proof}

The polar profile has the following local behaviour near the origin and infinity:
\begin{lemma}\label{la:thetaAsymtot}
Let $\theta$ be a solution of (\ref{eq:thetaODE})-(\ref{eq:thetaBoundary}). Then for $h \gg1$
%\begin{enumerate}[(i)]
%\item $\theta$ decays exponentially for sufficient large $h$, i.e.
\begin{equation}\label{eq:theta_infinity}
\theta(r) = \frac{\alpha_h}{\sqrt{r}}\exp(-\sqrt{h}r)+o(\exp(-\sqrt{h}r)) \quad \text{as } r \to +\infty
\end{equation}
where $\alpha_h$ is a parameter depending on $h$, and
%\item $\theta$ is linear near the origin, i.e.
\begin{equation}\label{eq:theta_0}
\theta(r)=\pi-\frac{h}{2}r + o(r) \quad \text{as } r \searrow 0.
\end{equation}
%\end{enumerate}
\end{lemma}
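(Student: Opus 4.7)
The plan is to linearize \eqref{eq:thetaODE} at the two endpoints $r = \infty$ and $r = 0$, identify the leading behaviour via the appropriate Bessel equation, and then control the nonlinear corrections.

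\textbf{Decay at infinity.} Since $\theta(r) \to 0$ by \eqref{eq:thetaBoundary}, we may expand $\sin \theta = \theta + O(\theta^3)$ and $\sin \theta \cos \theta = \theta + O(\theta^3)$, while $\sin^2 \theta / r = O(\theta^2 / r)$ is of lower order. The linearised equation is the modified Bessel equation
\[
y'' + \frac{1}{r} y' - \Bigl( \frac{1}{r^2} + h \Bigr) y = 0,
\]
whose decaying solution is $K_1(\sqrt{h}\, r)$. I would recast \eqref{eq:thetaODE} as an integral equation using the Green's function of this operator with vanishing boundary condition at $\infty$, and run a contraction argument in the weighted space $\{ y : \sup_{r \ge R_0} \sqrt{r}\, e^{\sqrt{h}\, r}|y(r)| < \infty \}$ for $R_0$ large. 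This identifies $\theta(r) = \alpha_h K_1(\sqrt{h}\, r)(1 + o(1))$, and combining with the classical expansion $K_1(z) \sim \sqrt{\pi/(2z)}\, e^{-z}$ as $z \to \infty$ yields \eqref{eq:theta_infinity}.

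\textbf{Smoothness at the origin.} The substitution $\psi = \pi - \theta$ turns \eqref{eq:thetaODE} into
\[
\psi'' + \frac{1}{r} \psi' - \frac{\sin \psi \cos \psi}{r^2} - \frac{2 \sin^2 \psi}{r} + h \sin \psi = 0, \qquad \psi(0) = 0,
\]
whose linearisation $\psi'' + \psi'/r - \psi/r^2 + h \psi = 0$ is Bessel's equation of order one with regular solution $J_1(\sqrt{h}\, r)$. A fixed-point argument on the integral form of the nonlinear equation on a small interval $[0, \delta]$ produces a one-parameter family of smooth local solutions $\psi(r) = a r + O(r^3)$, labelled by the slope $a = \psi'(0) = -\theta'(0)$. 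Direct substitution into the ODE confirms that the $r^2$ coefficient vanishes (more generally, only odd powers of $r$ appear in the Taylor series, consistent with smoothness of $\m_0$ at the origin); the $r^3$ coefficient is then determined as a specific polynomial in $a$ and $h$.

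\textbf{Identification of the slope.} The parameter $a$ is not fixed by the local expansion; it is selected globally by the requirement that the solution match the decaying outer mode $\alpha_h K_1(\sqrt{h}\, r)$. For $h \gg 1$ one expects a two-scale structure: an inner Belavin-Polyakov-like core $\pi - 2 \arctan(r / \lambda_h)$ of width $\lambda_h$, and an outer exponential tail of width $1/\sqrt{h}$. The DMI term $2 \sin^2 \theta / r$ acts as the solvability forcing for the inner harmonic-map problem (orthogonality to the dilation mode of its linearisation), and together with matching to the outer decaying tail this fixes $\lambda_h = 4/h$ and hence $a = 2/\lambda_h = h/2$, yielding \eqref{eq:theta_0}.

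\textbf{Main obstacle.} The hardest step is the slope identification $a = h/2$: the local ODE analysis at $r = 0$ leaves $a$ as a free parameter, and pinning down its specific value requires either a careful matched-asymptotic computation (with DMI providing the scale-fixing forcing) or an equivalent global shooting argument tying the origin data to the decay rate at infinity. By comparison, the infinity asymptotic and the origin regularity are fairly standard once the Bessel structures are recognised.
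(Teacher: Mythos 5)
Your handling of the two endpoints is fine and, where it overlaps with the paper, takes a legitimate alternative route. For \eqref{eq:theta_infinity} the paper does not use Bessel Green's functions: it adapts Strauss's convexity trick, showing that $w=(\sqrt{r}\,\theta)^2$ satisfies $w''\ge cw$ for large $r$, deduces exponential decay, and only then reads off the rate $\sqrt h$ by inserting the ansatz into \eqref{eq:thetaODE}; your contraction argument around $K_1(\sqrt h\, r)$ reaches the same conclusion and is, if anything, more complete. At the origin the paper works with the rescaled equation \eqref{eq:rescaled} to get a power-law bound and again inserts an ansatz; your local Bessel analysis agrees that the exponent is $1$ and, correctly, that the local solutions of the equation for $\pi-\theta$ form a one-parameter family $ar+O(r^3)$ with free slope $a$.

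The genuine gap is exactly the step you flag as hardest: the identification $a=h/2$. Your proposal asserts, without any computation, that matching a Belavin--Polyakov core of radius $\lambda_h$ to the outer decaying tail ``fixes $\lambda_h=4/h$''. It does not. The Zeeman energy of a BP core of radius $\lambda$ is logarithmically divergent and is cut off only at the field length scale $1/\sqrt h$, so the reduced energy that your matching procedure extremizes has the form $-c_1\lambda+c_2h\lambda^2\ln\frac{1}{\lambda\sqrt h}$; its minimization (equivalently, the solvability condition for the inner problem) gives $\lambda_h\sim C/(h\ln h)$, hence $a=2/\lambda_h\sim h\ln h$ with an unavoidable logarithmic factor --- this log-corrected radius law is precisely what the later rigorous small-radius analyses of this ODE (Komineas--Melcher--Venakides, Bernand-Mantel--Muratov--Simon) obtain. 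So your sketched route, carried out honestly, does not yield \eqref{eq:theta_0}; it contradicts the stated constant. You should also know that the paper is vulnerable at the very same point: its proof asserts that ``inserting the ansatz into \eqref{eq:thetaODE} yields $\tilde\alpha_h=h/2$ and $\tilde\beta_h=1$'', but, as your own expansion shows, insertion can only force the exponent $\tilde\beta_h=1$ and leaves the amplitude --- a global shooting parameter --- completely undetermined. What either argument actually establishes is the weaker statement $\theta(r)=\pi-a_h r+o(r)$ for some finite $a_h>0$; that much suffices for the non-degeneracy \eqref{eq:non_deg} and for the kernel identification in Section \ref{sec:stab}, but not for the proof of \eqref{eq:theta_h}, where the numerical value of $a_h$ enters through the requirement $a_h\le\frac{2}{3}h$.
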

\begin{proof}
For \eqref{eq:theta_infinity} we adapt the argument for the exponential decay of axisymmetric solution in \cite{strauss1977existence}. The axisymmetric solution is continuous for $r >0$ and it satisfies the equation $\theta''+\frac{1}{r} \theta' = F(\theta,r)$ where 
\[
F(\theta,r)=\frac{1}{r^2} \sin \theta \cos \theta - \frac{2}{r}\sin^2 \theta + h\sin \theta, \quad  r \in (0,\infty).
\]
Hence $\frac{1}{r}(r\theta')'= \theta''+\frac{1}{r} \theta$ is continuous and $\theta$ is a $C^2$ function for $r>0$. Now $v(r)=r^{\frac{1}{2}} \theta(r)$ satisfies the equation
\[
\left( \frac{1}{2}v^2 \right)'' = (v')^2 + v v'' = (v')^2 + \left( q(r)-\frac{1}{4r^2} \right) v^2
\]
where
\[
q(r) = \frac{F(\theta(r))}{\theta(r)} 
= \frac{\sin \theta}{\theta} \left[ \frac{1}{r} \left(\frac{1}{r}\cos \theta-2\sin \theta \right) + h \right].
\]
and $q(r)-\frac{1}{4r^2}>1$ for large enough $r$. Thus $w=v^2$ satisfies the inequality $w'' \geq c w$ for some constant $c>0$. It implies the exponential decay of $w$, hence of $\theta$, i.e.
\[
\theta(r)=\frac{\alpha_h}{\sqrt{r}} \exp(-\beta_h r) + o(\exp(-\beta_h r))
\]
where $\alpha_h ,\beta_h$ are parameters depending on $h$ and $\beta_h>0$. Plugging this ansatz into the \eqref{eq:thetaODE} we get $\beta_h = \sqrt{h}$.

Let us now turn to \eqref{eq:theta_0}. According to the rescaled equation (\ref{eq:rescaled}) we have $\frac{\sin \phi}{\phi} \left( \cos \phi + 2e^{-t} \sin \phi - he^{-2t} \right) \geq \frac{1}{2}$ for $t$ large enough and hence
\[
\left( \frac{1}{2}\phi^2 \right)'' = (\phi')^2 + \phi^2 \bigg[ \frac{\sin \phi}{\phi} \left( \cos \phi + 2e^{-t} \sin \phi - he^{-2t} \right)\bigg] \geq c \phi^2
\]
for some $c>0$, which implies the exponential decay of $\phi$ and therefore
\[
\theta(r)=\pi - \tilde{\alpha}_h r^{\tilde{\beta}_h} + o(r^{\tilde{\beta}_h}) \quad \text{as } r \searrow 0.
\]
where $\tilde{\alpha}_h ,\tilde{\beta}_h$ are parameters depending on $h$ and $\tilde{\beta}_h>0$. Inserting this into (\ref{eq:thetaODE}) yields $\tilde{\alpha}_h = h/2$ and $\tilde{\beta}_h=1$.
\end{proof}

\begin{lemma}
We have the following estimates for $h$ sufficiently large
\begin{equation}
|\cos \theta - r\sin \theta| < \frac{3}{2},
\end{equation}
\begin{equation}
r^2(\theta'(r))^2 \geq \sin^2 \theta(r)
\end{equation}
and
\begin{equation}
h-\frac{3}{2r}\sin \theta \geq 0
\end{equation}
for all $r \in (0,\infty)$. 
\end{lemma}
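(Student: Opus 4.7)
The plan is to prove the three bounds jointly, exploiting one differential identity derived from (\ref{eq:thetaODE}) together with the monotonicity in Lemma \ref{la:thetaMonoton} and the asymptotic information in Lemma \ref{la:thetaAsymtot}. I would work with the auxiliary functions
\[
F(r) := r^2 (\theta'(r))^2 - \sin^2\theta(r), \quad u(r) := \frac{\sin\theta(r)}{r}, \quad g(r) := \cos\theta(r) - r\sin\theta(r),
\]
corresponding respectively to the three claims.

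The key calculation is that multiplying (\ref{eq:thetaODE}) by $2 r^2 \theta'$ and rearranging yields
\[
F'(r) \;=\; 2\,r\,\theta'(r)\,\sin\theta(r)\,\bigl(hr - 2\sin\theta(r)\bigr).
\]
Since $\theta' < 0$ and $\sin\theta > 0$ on $(0,\infty)$ by Lemma \ref{la:thetaMonoton}, the sign of $F'$ is opposite to that of $hr - 2\sin\theta = r(h - 2u)$, so the whole analysis reduces to fine quantitative control of $u$.

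For (\ref{eq:theta_h}), Lemma \ref{la:thetaAsymtot} gives $u(0^+) = h/2$, while the trivial bound $\sin\theta \le 1$ yields $u(r) \le 1/r \le 2h/3$ whenever $r \ge 3/(2h)$; on the remaining interval $(0, 3/(2h))$ the Taylor expansion from Lemma \ref{la:thetaAsymtot} provides a bound of the form $u(r) \le h/2 + o(h)$ for $h \gg 1$, comfortably below $2h/3$. For (\ref{eq:theta_sin}) I would integrate the identity for $F'$: at the boundary $F(0) = 0$, while from the exponential decay $\theta(r) \sim \alpha_h r^{-1/2} e^{-\sqrt{h}\,r}$ one gets $r\theta' \sim -(\sqrt{h}\,r + 1/2)\theta$ and hence $F(r) \sim (h r^2 + \sqrt{h}\,r - 3/4)\,\theta(r)^2 > 0$ for $r$ large. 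Splitting $(0,\infty)$ at the zeros of $h - 2u$, the function $F$ is monotone on each subinterval; matching with the known endpoint values then forces $F \ge 0$ globally. For (\ref{eq:theta_cos}), one computes $g(0) = -1$ and $\lim_{r\to\infty} g(r) = 1$; at any interior extremum $g'(r^*) = 0$ reads $\sin\theta = -\theta'(\sin\theta + r\cos\theta)$, which combined with (\ref{eq:theta_sin}) and (\ref{eq:theta_h}) yields a quantitative inequality that excludes $|g(r^*)| \ge 3/2$ for $h$ sufficiently large.

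The main obstacle will be step (\ref{eq:theta_sin}). The function $F$ vanishes at $r = 0$ to high order, so its sign near the origin is dictated by subtle higher-order Taylor coefficients and is not visible from the expansion stated in Lemma \ref{la:thetaAsymtot}. Quantitative control of the location (and multiplicity) of the zeros of $hr - 2\sin\theta$ requires a refined asymptotic analysis of $\theta$ near $0$, and it is precisely the regime $h \gg 1$ that ensures the competition between the Dzyaloshinskii--Moriya and Zeeman contributions in (\ref{eq:thetaODE}) tips the balance in favour of the inequality.
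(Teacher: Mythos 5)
Your starting point, the identity
\[
\frac{\dif}{\dif r}\Bigl( r^2(\theta')^2 - \sin^2\theta \Bigr) \;=\; 2r\,\theta'\,\sin\theta\,\bigl(hr - 2\sin\theta\bigr),
\]
is exactly the computation at the heart of the paper's proof, and your observation that everything reduces to the sign structure of $hr-2\sin\theta$ is also the paper's. But at the three places where actual work is required, your proposal either misuses a lemma or defers the difficulty. For \eqref{eq:theta_h} you claim that Lemma \ref{la:thetaAsymtot} gives $u(r)=\sin\theta(r)/r \le h/2+o(h)$ on the whole interval $(0,3/(2h))$. It does not: \eqref{eq:theta_0} is an asymptotic statement as $r\searrow 0$ at \emph{fixed} $h$, so it yields only $\lim_{r\to 0}u(r)=h/2$; the error term $o(r)$ is not uniform in $h$ and cannot be evaluated at the scale $r\sim 1/h$. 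The paper avoids this by counting zeros: $hr-\tfrac{3}{2}\sin\theta$ is positive near $0$ (this much does follow from \eqref{eq:theta_0}) and tends to $+\infty$, so if it were negative somewhere it would have at least two zeros, which is excluded using the monotonicity of $\theta$ (Lemma \ref{la:thetaMonoton}). For \eqref{eq:theta_sin}, your ``splitting $(0,\infty)$ at the zeros of $h-2u$ and matching endpoint values'' presupposes control of that zero set, and you concede in your final paragraph that you do not have it; this is precisely the missing step, not a technicality. The paper closes it with the same single-zero claim: since $F(0)=0$ and $F(r)\to 0$ as $r\to\infty$, if $F<0$ somewhere then by the mean value theorem $F'$ is negative at some $r_1$ and positive at some $r_2>r_1$, so $hr-2\sin\theta$ is positive at $r_1$, negative at $r_2$, and positive again for large $r$, giving at least two zeros --- contradicting the one-zero property.

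For \eqref{eq:theta_cos} you give no argument at all: ``a quantitative inequality that excludes $|g(r^*)|\ge 3/2$'' is a placeholder, and it is not clear your ingredients can produce it. At an interior critical point, $\sin\theta=-\theta'(\sin\theta+r\cos\theta)$ forces $\sin\theta+r\cos\theta>0$, and combining with \eqref{eq:theta_sin} (i.e.\ $-\theta'\ge\sin\theta/r$) gives $r(1-\cos\theta)\ge\sin\theta$, hence $\cos\theta-r\sin\theta\le -1$ there: your ingredients naturally bound $g$ from \emph{above} at critical points, whereas ruling out $g\le -3/2$ requires a bound from \emph{below}, i.e.\ an upper bound on $r\sin\theta$, which neither \eqref{eq:theta_sin} nor \eqref{eq:theta_h} supplies. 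The paper's constant $3/2$ has a specific quantitative source that your proposal lacks entirely: the axisymmetric energy upper bound $E(\m_0)<4\pi$ from \cite{melcher2014chiral}, which via the Zeeman term yields $r^*<2/\sqrt{h-1}<1/2$ for the unique radius with $\theta(r^*)=\pi/2$; on $(0,r^*)$ one then has $r\sin\theta-\cos\theta\le r^*+1<3/2$ trivially, while on $(r^*,\infty)$ an interior maximum above $1$ is excluded by a second-derivative argument. Your proposed ordering (prove \eqref{eq:theta_sin} and \eqref{eq:theta_h} first, then \eqref{eq:theta_cos}) is admissible in principle, since the paper's proofs of the last two inequalities do not use the first, but without the $r^*$ estimate or a substitute for it, the constant $3/2$ is out of reach.
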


In fact, as $h \to \infty$, we have $h-\frac{c}{r}\sin \theta \geq 0$ for constants $c$ approaching $2$.

\begin{proof}
We consider the function $f(r)=r\sin(\theta(r))-\cos(\theta(r))$. By Lemma \ref{la:thetaMonoton} there exists a unique $r^* >0$ so that $\theta(r^*)=\frac{\pi}{2}$. First we need an estimate of $r^*$. It follows from Young's inequality that
\begin{align*}
|E_H(\m)|= \bigg| \int_{\R^2} (\m-\ein_3) \cdot (\nabla \times \m) \dif x \bigg| 
%& \leq \frac{1}{\sqrt{h}}\int_{\R^2} \frac{1}{2} |\nabla \m|^2 + \frac{h}{2} |\m - \ein_3|^2 \dif x\\
& \leq \int_{\R^2} \frac{1}{2} |\nabla \m|^2 + \frac{1}{2} |\m - \ein_3|^2 \dif x
\end{align*}
From the (axisymmetric) upper bound in \cite{melcher2014chiral} we know that $E(\m) < 4\pi$ and $1-\cos \theta >1$ in $(0,r^*)$. Then
\begin{align*}
4\pi > E(\m)  &\geq \int_{\R^2} \frac{1}{2} |\nabla \m|^2 + \frac{h}{2} |\m - \ein_3|^2 \dif x - |E_H(\m)| \\
& \geq \frac{h-1}{2} \int_{\R^2} |\m - \ein_3|^2 \dif x 
= (h-1) \int_{\R^2}  (1-m_3) \dif x \\
&= 2\pi (h-1) \int_0^{\infty} (1-\cos \theta) r \dif r 
%> 2\pi (h-1) \int_0^{r^*} (1-\cos \theta) r \dif r
> 2\pi (h-1) \int_0^{r^*} r \dif r\\
&=\pi (h-1)(r^*)^2
\end{align*}
and hence
\[
r^* < \frac{2}{\sqrt{h-1}} < \frac{1}{2}
\]
for $h>17$. Now we have $f(r^*)=r^*<1$, $f(r)>-1$ for all $r>0$ and $ f(r) \searrow -1$ as $r \to +\infty$ by \eqref{eq:theta_infinity}. If $f$ is bigger than 1 somewhere in $(r^*,\infty)$, there would exist a $\tilde r \in (r^*,\infty)$ such that $f(\tilde r)>1$, $f'(\tilde r)=0$. But for $h$ sufficiently large we can always deduce $f''(\tilde r)>0$, which leads to a contradiction. Hence $f(r)<1$ in $(r^*,\infty)$ and
\[
|\cos \theta - r \sin \theta| < 1+r^* < \frac{3}{2}.
\]

For the second inequality we consider the function $g(r)=r^2(\theta'(r))^2-\sin^2 \theta(r)$. It is clear that $g(0)=0$ and $\lim_{r \to \infty} g(r) = 0$ by \eqref{eq:theta_infinity}. According to the \eqref{eq:thetaODE} we have
\begin{align*}
g'(r) &= 2r(\theta'(r))^2 + 2r^2 \theta'(r) \theta''(r) - 2 \sin \theta(r) \cos \theta(r) \theta'(r)\\
& = 2r \sin \theta(r) \theta'(r) (hr-2\sin \theta(r)).
\end{align*}
If $g(r)<0$ at some point in $(0,\infty)$, there would exist two points $0<r_1<r_2<\infty$ so that $g'(r_1)=g'(r_2)=0$. But this is impossible since the function $r \mapsto hr-2\sin \theta(r)$ has only one zero-point in $(0, \infty)$ due to the monotonicity of $\theta$.

By \eqref{eq:theta_0} we know that $r \mapsto hr-\frac{3}{2} \sin \theta(r)$ is positiv in the near of 0. If the map is negative at some point, it would have two zero points in $(0,\infty)$, which is impossible due to the monotonicity of $\theta$. Therefore $hr-\frac{3}{2}\sin \theta \geq 0$ and the last inequality follows.
\end{proof}

Using the properties above we deduce the uniqueness.
\begin{lemma}
The solution of (\ref{eq:thetaODE})-(\ref{eq:thetaBoundary}) is unique for $h \gg 1$.
\end{lemma}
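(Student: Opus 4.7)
My plan is to reduce global uniqueness of the BVP to a local analysis at the singular endpoint $r=0$. Since \eqref{eq:thetaODE} is a regular second-order ODE on any subinterval of $(0,\infty)$, Picard--Lindel\"of uniqueness shows that two BVP solutions $\theta_1,\theta_2$ coincide on all of $(0,\infty)$ as soon as they agree on some right-neighborhood $(0,r_0)$ of the origin. It therefore suffices to prove local uniqueness at $0$.

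First, I would invoke Lemma \ref{la:thetaAsymtot} to pin down the initial data: any BVP solution satisfies $\theta(r)=\pi-\tfrac h2 r+o(r)$ as $r\searrow 0$, and rewriting \eqref{eq:thetaODE} as $(r\theta')'=\sin\theta\cos\theta/r-2\sin^2\theta+hr\sin\theta$ (whose right-hand side extends continuously to $r=0$ with value $-h/2$) upgrades this to $\theta\in C^1([0,\infty))$ with $\theta'(0)=-h/2$. Consequently, for $u:=\theta_1-\theta_2$ one has $u,u'\in C([0,\infty))$, $u(0)=u'(0)=0$ and moreover $u(r)=o(r)$. Subtracting the equations for $\theta_1,\theta_2$ and applying the mean value theorem to $\sin\theta\cos\theta$, $\sin^2\theta$, $\sin\theta$ yields a linear equation
\[
u''+\frac{1}{r}u'-\frac{\bar A(r)}{r^2}u+\frac{2\bar B(r)}{r}u-h\bar C(r)u=0
\]
with $\bar A,\bar B,\bar C$ convex combinations of $\cos(2\theta_i),\sin(2\theta_i),\cos\theta_i$. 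Because $\cos(2\theta)$ is critical at $\theta=\pi$ and $\sin(2\theta)$ vanishes there, the a priori bound $\theta_i-\pi=O(r)$ produces the sharp estimates $\bar A=1+O(r^2)$, $\bar B=O(r)$, $\bar C=O(1)$ near $0$.

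Next I would make the substitution $u=r\phi$. The $1/r^2$ singularity cancels against $u'/r$ up to $O(r)$, and after multiplication by $r^2$ the equation takes the conservative form
\[
(r^3\phi')'=q(r)\phi,\qquad q(r)=-(1-\bar A)r-2\bar B r^2+h\bar C r^3,\qquad |q(r)|\le Cr^3.
\]
Since $u(0)=u'(0)=0$, the identity $r^3\phi'(r)=r^2(u'(r)-\phi(r))$ forces $r^3\phi'(r)\to 0$ as $r\to 0$, so integration from the origin gives $r^3\phi'(r)=\int_0^r q(s)\phi(s)\,ds$. The bound $|q|\le Cr^3$ then produces $|\phi'(r)|\le Cr\|\phi\|_{L^\infty(0,r)}$, and a second integration together with $\phi(0)=0$ yields $|\phi(r)|\le Cr^2\|\phi\|_{L^\infty(0,r)}$. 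Taking suprema forces $\phi\equiv 0$ on $(0,r_0)$ for any $r_0$ with $Cr_0^2<1/2$, hence $u\equiv 0$ there, and Picard--Lindel\"of propagates this to $(0,\infty)$.

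The main obstacle is the interplay between the singular $-u/r^2$ coefficient and the weak boundary condition $u=o(r)$. The Euler model $u''+u'/r-u/r^2=0$ has fundamental solutions $u=r$ and $u=1/r$, which are only barely excluded by the condition $u=o(r)$. The argument lives or dies by the resonance cancellation $1-\bar A(r)=O(r^2)$---a direct consequence of $\cos(2\theta)$ being flat at $\theta=\pi$---which upgrades the perturbation term $(1-\bar A)\phi/r$ from $O(1/r)$ to $O(r)$ and thereby reduces the problem to a genuine contraction on a small interval.
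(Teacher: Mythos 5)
Your argument is correct, but it takes a genuinely different route from the paper. You localize at the singular endpoint $r=0$: using \eqref{eq:theta_0} both solutions share the exact slope $-h/2$ at the origin, so the difference $u=\theta_1-\theta_2$ satisfies $u=o(r)$, which kills the $r$-branch of the indicial roots $\pm1$ of the Euler operator $u''+u'/r-u/r^2$, while the $1/r$-branch is excluded by boundedness; the resonance cancellation $\bar A-1=O(r^2)$ (flatness of $\cos 2\theta$ at $\theta=\pi$) then makes the contraction on $(0,r_0)$ close, and Picard--Lindel\"of propagates $u\equiv0$ to all of $(0,\infty)$. The paper instead works at infinity: it picks $r_1,r_2\gg1$ with $\theta_1(r_1)=\theta_2(r_2)=\varepsilon$, translates one solution so both pass through the same point, and compares second derivatives — the dominant term $h(\sin\tilde\theta_2-\sin\theta_1)$ forces the derivative gap to grow, contradicting the decay \eqref{eq:theta_infinity}. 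What each buys: your proof is a standard Frobenius-type uniqueness argument that is arguably tighter once \eqref{eq:theta_0} is granted, but it leans on the full strength of that lemma (the precise common coefficient $h/2$, not just $\theta\to\pi$, is indispensable — without it $u=o(r)$ fails and the $r$-mode survives); the paper's comparison needs only the qualitative exponential decay at infinity and monotonicity, at the price of a translation trick and a somewhat delicate bookkeeping of the subdominant $1/r$-terms. Two small points you should flesh out: (a) the $C^1([0,\infty))$ upgrade requires first showing $\lim_{r\to0}r\theta'(r)=0$ — boundedness of the right-hand side of $(r\theta')'=\sin\theta\cos\theta/r-2\sin^2\theta+hr\sin\theta$ gives existence of the limit, and a nonzero limit would force a logarithmic divergence of $\theta$, contradicting $\theta(0)=\pi$; (b) to have honest (continuous) coefficients $\bar A,\bar B,\bar C$ use the integral form of the remainder, e.g. $\bar A(r)=\int_0^1\cos\bigl(2\theta_2(r)+2t\,u(r)\bigr)\dif t$, rather than mean-value intermediate points. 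Neither is a real obstruction.
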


\begin{proof}
Suppose there exist two different solutions $\theta_1$ and $\theta_2$. We choose $0< \varepsilon \ll 1$ so that $\theta_1(r_1)=\theta_2(r_2)=\varepsilon$ with $r_1,r_2 \gg 1$. Without loss of generality, we may assume $r_1 > r_2$. Define $\tilde{\theta}_2(r)=\theta_2(r-r_1+r_2)$ for $r \geq r_1$. Then
\[
\tilde{\theta}''_2(r)=-\frac{1}{r-r_1+r_2}\tilde{\theta}_2' + \frac{1}{(r-r_1+r_2)^2} \sin \tilde{\theta}_2 \cos \tilde{\theta}_2 - \frac{2}{r-r_1+r_2} \sin^2 \tilde{\theta}_2 + h\sin \tilde{\theta}_2.
\]
If $\tilde{\theta}'_2(r_1) > \theta'_1(r_1)$, then by $\tilde{\theta}_2(r_1)=\theta_1(r_1)$ and the Taylor's formula we would have $\tilde{\theta}_2(r)> \theta_1(r)$ for $r>r_1$ and very close to $r_1$. Using the decay property (\ref{eq:theta_infinity}) for $r \gg 1$  and the monotonicity of trigonometric functions on $(0,\frac{\pi}{4})$ we deduce 
\[
\tilde{\theta}_2''(r)-\theta_1''(r) = h(\sin \tilde{\theta}_2 - \sin \theta_1) + o(\sin \tilde{\theta}_2) > 0.
\]
Thus we obtain $\tilde{\theta}_2(r)>\theta_1(r)$ for $r >r_1$ and $\tilde{\theta}'_2(r)-\theta'_1(r)$ is strictly increasing. It contradicts the fact $\tilde{\theta}'_2 \to 0$ and $\theta'_1 \to 0$ as $r \to \infty$. For $\tilde{\theta}'_2(r_1) \leq \theta'_1(r_1)$ a similar argument applies.
\end{proof}

\nocite{*}
\bibliographystyle{abbrv}
\bibliography{Stability_and_Traveling_waves}

\end{document}